\newtheorem{theorem}{Theorem}[section]
\theoremstyle{definition}
\newtheorem{definition}{Definition}[section]
\newtheorem{prop}[theorem]{Proposition}
\theoremstyle{remark}
\newtheorem*{remark}{Remark}
\definecolor{green}{rgb}{0.1,0.62,0.0}
\definecolor{orange}{rgb}{0.9,0.3,0.0}
\definecolor{deepblue}{rgb}{0.0,0.0,0.7}
\definecolor{blue}{rgb}{0.0, 0.0, 1.0}
\definecolor{red}{rgb}{1.0, 0.0, 0.0}
\font\myfont=cmbx12 at 11pt
\font\myfond=cmr12 at 10pt
\title{\myfont REGULARITY AND OPTIMAL CONTROL OF NONLOCAL CAHN-HILLIARD-BRINKMAN SYSTEM WITH  SINGULAR POTENTIAL}
\author{\myfond SHEETAL DHARMATTI \thanks{sheetal@iisertvm.ac.in \; School of Mathematics, Indian Institute of Science Education and Research, Thiruvananthapuram, Maruthamala PO, Vithura, Thiruvananthapuram, Kerala, 695551, INDIA}  \;  and  \;  GREESHMA K \thanks{ greeshmak21@iisertvm.ac.in \; School of Mathematics, Indian Institute of Science Education and Research, Thiruvananthapuram, Maruthamala PO, Vithura, Thiruvananthapuram, Kerala, 695551, INDIA,\\ \textbf{Acknowledgement }: Greeshma K would like to thank the Department of Science and Technology (DST), India, for the Innovation in Science Pursuit for Inspired Research (INSPIRE) Fellowship (IF210199). } }
\date{}
\numberwithin{equation}{section}
\begin{document}
\maketitle

The evolution of two incompressible, immiscible, isothermal fluids in a bounded domain and a porous media is described by the coupled Cahn-Hilliard-Brinkman (CHB) system. The CHB  system consists of the Cahn-Hilliard equation describing the dynamics of the relative concentration of fluids and the Brinkman equation for velocity. This work addresses the optimal control problem for a two-dimensional nonlocal CHB system with a singular-type potential. The existence and regularity results are obtained by approximating the singular potential by a sequence of regular potentials and introducing a sequence of mobility terms to resolve the blow-up due to the singularity of the potential. Further, we prove the existence of a strong solution under higher regularity assumptions on the initial data and the uniqueness of the solution using the weak-strong uniqueness technique. By considering the external forcing term in the velocity equation as a control, we prove the existence of an optimal control for a tracking type cost functional. The differentiability properties of the control-to-state operator are studied to establish the first-order necessary optimality conditions. Moreover, the optimal control is characterised in terms of the adjoint variable.
\\
\\
\textbf{Mathematics Subject Classification.} 35D35, 35Q35, 49J20, 49J50, 49K20, 76S05, 76D99, 76T99
\\ 
\\
\textbf{Keywords and phrases}. Cahn-Hilliard equation, Brinkman equation, strong solution, weak-strong uniqueness, Optimal control problem, Optimal control
\setcounter{section}{1}

\section*{Introduction}
The evolution of two immiscible, incompressible fluids in a bounded domain and a porous media is modelled by the Cahn-Hilliard-Brinkman (CHB) system. The CHB system consists of two coupled equations one for the average velocity of the fluids, and the other describes the dynamics of relative concentration.
Consider $\Omega \subseteq \mathbb{R}^d, d\in \{2,3\}$, an open, bounded, connected, smooth domain with boundary $\partial\Omega$. We denote the sets $\Omega \times (0,T)$ and $\partial\Omega \times (0,T)$ by $Q$ and $\Sigma$ respectively. Let $\varphi$ be the relative concentration of two fluids ($\varphi = \varphi_1 -\varphi_2,)$ where $\varphi_1, \varphi_2$ are concentrations of individual fluids and $\textbf{u}$ be the average velocity of fluids. The Nonlocal Cahn-Hilliard-Brinkman system is given by,

\begin{align}
\varphi'+\nabla\cdot(\ \textbf{u}\varphi )\ &=\ \nabla\cdot( m(\varphi)\nabla\mu),     \; \text{in}\;\; Q,\label{eq1}
\\\mu &=\ a\varphi- J \ast\varphi + F '\left(\varphi\right),    \; \text{in}\;\; Q,\label{eq2}
\\-\nabla\cdot\left(\nu\left(\varphi\right)\nabla \textbf{u} \right) + \eta \textbf{u} + \nabla \pi &=\ \mu\nabla\varphi + \textbf{h},   \; \text{in}\;\; Q,\label{eq3}
\\ \nabla\cdot \textbf{u}  & =  0, \, \text{in}  \;\; Q.\label{eq4}
\end{align}

We endow this system with the following initial and boundary conditions.

\begin{align}
 \frac{\partial \mu}{\partial n} = 0 ,\;\; \text{on} \; \Sigma, \label{eq05}
\\\textbf{u}=0,\; \; \text{on} \;   \Sigma,\label{eq06}
\\ \varphi\left(. , 0 \right) =\varphi_{0},\; \; \text{in} \; \; \Omega.\label{eq07}
\end{align}

Let $\nu > 0$ denote the viscosity coefficient, which may depend on $\varphi$. Permeability is denoted by $\eta > 0$, and $\pi$ denotes the pressure exerted on the fluids.  The mobility $m$  depends on $\varphi$ and $J: \mathbb R^{d}\longrightarrow \mathbb R$ is a suitable interaction kernel with $a$ defined as $a(x):=\int_{\Omega}{J(x-y)dy}.$ The above system is nonlocal because of the convolution term $J*\varphi$. Local CHB system is obtained by replacing $\mu$ in equation \eqref{eq2} by $\mu = -\Delta\varphi + F'\left(\varphi\right)$. 
 Here $F$ denotes the potential, and it can be either regular or singular.  The external forcing term is denoted by $\textbf{h}$. 

An example of a  regular potential is a well known double well potential (e.g. $\frac{1}{4}(s^{2}-1)^{2}$). 
In most of the practical problems, the potential that appears is singular. The two important examples of the singular potentials that are considered in this work are:  Double Obstacle potential  defined by,
\begin{equation}\label{eq08}
F_{\mathrm{do}}(r)=\frac{1}{2} (1-r^2)+ \mathbb{I}_{[-1,1]}(r)= 
\begin{cases}
\frac{1}{2}(1-r^2) \text{ for }  r\in [-1,1],\\ 
\infty   \text{  otherwise.  }
\end{cases}
\end{equation}
and the Logarithmic potential defined by, 
\begin{equation}\label{eq09}
F_{\log}(r)=\frac{\theta}{2}((1+r) \log(1+r)+(1-r) \log(1-r))+\frac{\theta_c}{2}\left(1-r^2\right)\quad \text{ for } r \in(-1,1).
\end{equation}
where $0<\theta <\theta_c$. For logarithmic potential defined on $(-1,1)$, all its derivatives exist with a blow-up as $r$ approaches $\pm 1$. The singularity of the potential at pure phases $\varphi =\pm 1$ is resolved by approximating the potential using smooth functions. Adopting the technique used in \cite{CEG}, we introduce a compatible mobility function to resolve the blow-up of derivatives of the potential in neighbourhoods of $\pm 1$. The following choice of mobility function will work for the potential $F_{log}$,
\begin{equation}\label{eq010}
m(r)= 
\begin{cases}
(1-r^2), \text{ for } r\in [-1,1]\\
0, \text{ for } |r|>1.
\end{cases}
\end{equation}
The double obstacle potential $F_{do}$ has a singularity at pure phases $\pm 1$ due to the jump discontinuity at these points. On $(-1,1)$, its derivatives of all orders exist and are bounded. Hence, we can choose a non-degenerate mobility function. In particular, we will consider a cutoff of the above mobility function. i.e, for a fixed $\epsilon>0,$
\begin{align}\label{eq1.11}
 m_{do}(s) := \begin{cases} m(s) &\text{ for } |s|\leq \epsilon, \\ m(\epsilon)  &\text{ for } |s|>\epsilon.\end{cases}
\end{align} 
In the literature, the CH equation has been studied  for well-posedness for a singular-type potential in \cite{GGG}, \cite{CEG} and \cite{DDU}. The authors have used approximations to the singular potential with potentials having polynomial  growth of suitable order. There are several works on the well-posedness of the coupled Cahn-Hilliard Navier Stokes' (CHNS) equations under varied assumptions on the potential and the mobility, e.g. \cite{CFG}, \cite{FGS}, \cite{FGR}, \cite{SEF}, \cite{FGG} and \cite{SFG}. The existence of a strong solution to the nonlocal CHNS system with singular potential and degenerate mobility was shown in \cite{SFG} using a time discretization scheme. In \cite{FGS}, the authors studied the optimal control problem of the nonlocal CHNS system with a singular potential in dimension 2. They also showed that under the zero viscosity condition, the CHB system reduces to the Cahn-Hilliard-Hele-Shaw (CHHS) system.
The existence of weak and strong solutions to the local and nonlocal CHHS system with a singular type potential is proved using a suitable approximation scheme in \cite{GAG} and \cite{DAG}, respectively.

Regarding the CHB system, the existence of a weak solution to the local CHB system with a regular double well potential, constant mobility and viscosity was analysed in \cite{BCG}. In \cite{DPG}, the above results were extended to the nonlocal CHB system with regular potential. For the nonlocal CHB system with constant mobility and regular potential, the strong solution results and the optimal control problem were studied in \cite{SDM} for dimension two. The three dimensional local-CHB system with the logarithmic potential, the well-posedness of weak solutions and the global in time existence of strong solutions were established in \cite{MCG}. 
 In \cite{CSM}, the authors approximated the nonlocal CHHS system with a singular potential and a non constant mobility by CHB system with constant viscosity, a regular potential and a non-degenerate mobility. They proved the existence of a weak solution to the CHHS system as the limit of solution of the CHB system. 
 In the same work, the existence of a strong solution is proved via fixed point arguments using Schauder's theorem. 

In this work our primary aim is to study the Optimal Control Problem (OCP) of nonlocal CHB system \eqref{eq1}-\eqref{eq07} 
with  variable mobility and singular potential (in particular either $F_{do}$ or $F_{log}$). The problem is to minimise a tracking type cost functional defined by,
\begin{align}
\mathcal{J}(\varphi,\textbf{u},\textbf{U}) \coloneqq \int\limits_{0}^{T} \lVert\varphi(t)-\varphi_{d}(t)\rVert^{2} dt + \int\limits_{0}^{T}\lVert\textbf{u}(t)-\textbf{u}_{d}(t)\rVert^{2} dt  +\int_{\Omega}\lVert\varphi(T)-\varphi_{\Omega}\rVert^{2} dx + \int\limits_{0}^{T}\lVert \textbf{U}(t)\rVert^{2} dt
\end{align}
in a bounded, closed and convex set of admissible controls defined by,
$$\mathcal{U}_{ad}=\{\textbf{U}\in \mathcal{U}: U_1(x,t)\leq \textbf{U}(x,t)\leq U_2(x,t), \text{ a.e } (x,t)\in \Omega\times (0,T) \}$$
subject to the nonlocal CHB system \eqref{eq1}-\eqref{eq07} 
Here $(\varphi,\textbf{u})$ represents a solution to the system \eqref{eq1}-\eqref{eq07} corresponding to the external force, $\textbf{U}$ and $(\varphi_d, \textbf{u}_d, \varphi_{\Omega})$ is the desirable state. The external forcing term $\textbf{U}$ acts as a control and we consider controls from the set, $\mathcal{U}=\{\textbf{U}\in L^\infty(0,T;\mathbb{G}_{div})|\textbf{U}_t\in L^2(0,T;\mathbb{V}_{div}')\}$ and $U_1, U_2 \in L^{\infty} (\Omega)$ are fixed functions.

 To study the OCP corresponding to the system \eqref{eq1}-\eqref{eq07}, we need to study the properties of the control-to-state operator. In particular we need to show that the control-to-state operator is  well-defined by  establishing the existence of a weak as well as strong solution of the associated system.  These results can be derived as in \cite{CSM}. As mentioned above, \cite{CSM}  proves strong solution results using fixed point arguments. We use a-priori estimates to prove the strong solution and also derive
  stability estimates in more regular spaces using these a-priori estimates which are needed to study the control problem. In turn we study the differentiability properties of the control-to-state operator. The main difficulty in the estimations is posed by  the nonlocal term and the singularity of the potential. To overcome these difficulties we introduce the following operators and rewrite the system using them.
 \begin{align}
 \lambda(s)&:= m(s)F''(s) \label{eq1.13a}\\
   B(s)&:=\int\limits_{0}^{s}\lambda(\bm{\tau})d\bm{\tau},\hspace{.5cm} \nabla B(\varphi)= \lambda(\varphi)\nabla\varphi \hspace{.25cm}; \hspace{1cm} b(s):=\int\limits_{0}^{s}m(\bm{\tau})d\bm{\tau},\hspace{.5cm} \nabla b(\varphi)= m(\varphi)\nabla\varphi \label{eq1.14}\\
   \widetilde{B}(s)&:= B(s)+ a(\cdot)b(s) \label{eq1.15}
\end{align}
According to the above notations \eqref{eq1} can be rewritten as,
\begin{align}
    \varphi' + \textbf{u}\nabla\varphi &= \Delta \widetilde{B}(\varphi) + \nabla\cdot\big(m(\varphi)\big((\nabla a) \varphi -\nabla J*\varphi\big)\big)-\nabla\cdot\big(b(\varphi)\nabla a\big).\label{eq1.13}
\end{align}
This representation enables us to derive the stability estimates and, further, the differentiability of the control to state operator.  

The paper is organised as follows. In the next section, we introduce the mathematical setup, the notations used, assumptions set and preliminary results regarding the well-posedness of the system. In section 3, we prove the existence of a strong solution by considering an approximate system obtained by replacing the potential with regular potential and then passing to the limit. Further the uniqueness of strong solution is proved by a weak-strong uniqueness argument and a stability estimate of higher regularity is derived. In section 4, we define the optimal control problem and prove that an optimal control exists. The differentiability of the control-to-state operator is also studied in this section. In section 5, we introduce a suitable adjoint system and derive a first order optimality condition. Lastly, we give a characterisation of the optimal control in terms of the adjoint variable.

\section{Mathematical setting and Well-Posedness of the System}
Let $\Omega \subseteq \mathbb{R}^d$, $d=2,3$, be an open bounded connected domain with a smooth boundary $\partial\Omega.$ Let H be the set of square-integrable functions on $\Omega$, $L^2(\Omega).$ V be the space $H^1(\Omega)$. Denote the norm in $H$ and $V$ by $\lVert \,\cdot\,\rVert$ and $\lVert \,\cdot\,\rVert_{V}$ respectively. The duality pairing between spaces $V$ and $V'$ is denoted by $\langle.,.\rangle$. We denote the closure of divergence-free space, $\mathcal{V } :=\{ u\in (C_0^\infty(\Omega)^d; \nabla. u = 0\}$ in $L^2(\Omega;\mathbb{R}^n)$ and $H^1(\Omega;\mathbb{R}^n)$ by $\mathbb{G}_{div}$ and $\mathbb{V}_{div}$ respectively. Then we have from \cite{TEM}, 
\begin{align*}
\mathbb{G}_{div} &= \{ \textbf{u}\in L^2(\Omega;\mathbb{R}^n): \text{div}(\textbf{u})=0, \textbf{u}.\textbf{n}|_{\partial \Omega}=0\},\\
 \mathbb{V}_{div} &= \{ \textbf{u}\in H^1_0(\Omega;\mathbb{R}^n): \text{div}(\textbf{u})=0\}.
\end{align*}
Let $\lVert \cdot \rVert$, $\lVert \cdot \rVert_{\mathbb{V}_{div}}$ be norm in $\mathbb{G}_{div}$ and $\mathbb{V}_{div}$ respectively. Let $\mathbb{V}_{div}'$ be the dual space of $\mathbb{V}_{div}$ and $\langle.,.\rangle$ denotes the duality pairing between $\mathbb{V}_{div}$ and $\mathbb{V}_{div}'.$ 
\\\\Well-posedness of the CHB system, with a regular potential has been studied in \cite{SDM} with the following set of assumptions on the potential.
\begin{enumerate}[font={\bfseries},label={A\arabic*.}]
\item[{[H1]}] $F \in C^{2,1}_{loc}(\mathbb{R})$ and there exists $c_0 > 0$ such that
    $$F''(s) + a(x) \geq c_0, \,\forall s \in \mathbb{R} \text{ and a.e } x \in \Omega.$$
    \item[{[H2]}] There exist $c_1>0, c_2>0$ and $q>0$ if $d=2,$ $q\geq \frac{1}{2}$ if $d=3$ such that 
    $$F''(s) + a(x) \geq c_1|s|^{2q}-c_2, \forall s \in \mathbb{R} \text{ and a.e } x \in \Omega.$$
    \item[{[H3]}] There exists $c_3>0$ and $p \in (1,2]$ such that 
    $$|F'(s)|^p \leq c_3(|F(s)+1|), \, \forall s\in \mathbb{R}.$$
\end{enumerate}
To study the case of singular potential   we set the following assumptions. 
\begin{enumerate}[font={\bfseries},label={A\arabic*.}]
    \item[{[N]}] The viscosity $\nu$ is Lipschitz continuous on $\mathbb{R}$ and there exist some $\nu_0, \nu_1 > 0$ such that  $\nu_0 \leq \nu(s) \leq \nu_1, \forall \ s\in \mathbb{R},$\; 
    and $\eta \in L^{\infty}(\Omega)$ is such that $\eta(x) \geq 0,$ a.e $x \in \Omega.$
    \item[{[J]}] $J \in W^{1,1}(\Omega)$ is an even function and $a(x) := \int_{\Omega} J(x-y)dy \geq 0 $ for almost all $ x\in \Omega .$
    And \\$\sup_{x\in \Omega}\int_{\Omega}|J(x-y)|dy  < \infty,$ $b:= \sup_{x\in \Omega}\int_{\Omega}|\nabla J(x-y)|dy < \infty.$
\end{enumerate}
To treat the case of singular potential we need to suitably modify the assumptions on the potential as follows:
\begin{enumerate}[font={\bfseries},label={A\arabic*.}]
    \item[{[A1]}] The mobility $m\in C([-1,1])$ such that $m(s)\geq 0, \forall s \in [-1,1]$ and $m(s) =0$ iff $s=\pm 1.$ Further, there exists  $\epsilon_0 > 0$ such that $m$ is non-increasing in $[1-\epsilon_0,1]$ and non-decreasing in $[-1,-1+\epsilon_0].$
    \item[{[A2]}] The potential, $F = F_1 + F_2$ where $F_1\in C^2(-1,1)$ and $F_2\in C^2[-1,1].$ And 
    $\lambda_1:= mF_1''\in C([-1,1])$ is such that there exists some $\alpha_0 >0$ such that $\lambda_1(s)\geq \alpha_0, \forall s\in[-1,1].$
    \item[{[A3]}] There exists  $\epsilon_1 > 0$ such that $F''$ is non-decreasing in $[1-\epsilon_1,1)$ and non-increasing in $(-1,-1+\epsilon_1].$
    \item[{[A4]}] There exists  $c_0 >0$ such that $m(s)(F''(s) + a(x)) \geq \alpha_1, \forall s\in(-1,1)$ and a.e $x \in \Omega.$
\end{enumerate}

To study the optimal control problem [OCP], we need to prove the strong solution results for the underlying system \eqref{eq1}- \eqref{eq07}. We start by defining a weak solution and available results towards it.

\begin{definition}[Weak solution]\label{def1}
Let $T\geq 0$ be given, $\varphi_0 \in H$ be such that $F(\varphi_0) \in L^1(\Omega)$ and $\textbf{h}\in L^2(0,T;\mathbb{V}_{div}')$. A pair  $(\varphi, \textbf{u})$ is a weak solution to the system \eqref{eq1}-\eqref{eq4} with initial and boundary conditions \eqref{eq05}-\eqref{eq07} on $[0,T]$ if
\begin{align*}
\varphi  &\in L^2(0,T;V) \cap C([0,T];H),\\
\varphi' &\in L^2(0,T;V'),\\
m(\varphi)\nabla\mu &= m(\varphi)(\nabla(a\varphi)- \nabla J*\varphi)+ \lambda(\varphi)\nabla\varphi\in L^2(0,T;H),\\
\textbf{u} &\in L^2(0,T;\mathbb{V}_{div}).
\end{align*}
And it satisfies the following weak formulation.
\begin{align}
<\varphi',\psi> +(\textbf{u}\cdot\nabla\varphi,\psi)\ & + (m(\varphi)\nabla \mu,\nabla\psi) = 0,     \; \forall \psi\in V, \;\; \text{a.e \; in } \left(0,T\right), \label{eq7}\\
(\nu(\varphi)\nabla \textbf{u},\nabla \textbf{v})  + (\eta \textbf{u},\textbf{v}) &= (\mu\nabla\varphi,\textbf{v}) + <\textbf{h},\textbf{v}>,   \; \forall \textbf{v}\in \mathbb{V}_{div},\;\; \text{a.e in} \left(0,T\right),\label{eq8} \\
 \varphi(x,0) &=  \varphi_0, \, \text{a.e\; in }  \Omega .
\end{align}
\end{definition}

 \begin{prop}[\textbf{Existence of a weak solution to the nonlocal CHB system with a singular potential}]
Let $\varphi_0\in H$ be such that $F(\varphi_0) \in L^1(\Omega)$, $\textbf{h}\in L^2(0,T;\mathbb{V}_{div}')$ and assumptions (\textbf{N}), (\textbf{J}), (\textbf{A1})-(\textbf{A4}) hold. Further, assume there exists a function $M \in C^2(-1,1)$ such that $m(s)M''(s) =1, \forall s\in [-1,1]$ with $ M(0)= M'(0)= 0$ and satisfies the condition $M(\varphi_0) \in L^1(\Omega)$.

Then there exists a weak solution $(\varphi, \textbf{u})$ in the sense of Definition \ref{def1} to the nonlocal CHB system \eqref{eq1}-\eqref{eq07} and with potential given by either \eqref{eq08} or \eqref{eq09}.
Moreover if we set,
\begin{equation*}\label{eqE}
   \mathcal{E}(t)= \int_{\Omega}F(\varphi)dx- \frac{1}{2}\int_{\Omega}\int_{\Omega}J(x-y)\varphi(x)\varphi(y))dxdy, 
\end{equation*}
Then the following energy estimate holds for almost all $t\in [0, T].$ 
 \begin{align}\label{eq1.4}
   \mathcal{E}(t)+ \frac{c_0}{2}\lVert\varphi\rVert_{L^\infty(0,T;H)}^2 &+\int\limits_{0}^{t}\Big( \lVert\sqrt{m(\varphi)}\nabla\mu\rVert^2 +  \lVert\sqrt{\nu(\varphi)}\nabla u\rVert^2+ \frac{\eta}{2} \lVert u\rVert^2\Big) \nonumber\\&\leq C\big(\mathcal{E}_0+\lVert\varphi_0\rVert^2+ \lVert \textbf{h}\rVert_{L^2(0,T;\mathbb{V}_{div}')}^2\big).
\end{align}

In addition, assume $\varphi_0\in L^\infty(\Omega)$ and $\textbf{h}\in L^\infty(0,T;\mathbb{V}_{div}')$, then a solution $(\varphi, \textbf{u})$  has the following regularity.
    $$\varphi \in L^\infty(\Omega\times [0,T]),\,\, |\varphi(x)|\leq 1 \text{ a.e in } \Omega \times [0,T] \text{ and } \,\,\textbf{u}\in L^{\infty}(0,T;\mathbb{V}_{div} ).$$
\end{prop}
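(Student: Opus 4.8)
The plan is to obtain the solution as a limit of solutions to a family of regularized problems for which the regular-potential well-posedness theory of \cite{SDM} applies, and then to transfer the uniform bounds and the constraint $|\varphi|\le 1$ to the limit. For $\delta\in(0,1)$ I would first replace the singular potential $F$ by a globally $C^2$ potential $F_\delta$ obtained by freezing $F''$ at the levels $F''(\pm(1-\delta))$ and extending affinely outside $[-1+\delta,1-\delta]$, so that $F_\delta$ has at most quadratic growth and satisfies \textbf{[H1]}--\textbf{[H3]} uniformly in $\delta$. Simultaneously I would replace the degenerate mobility $m$ by a non-degenerate $m_\delta$, bounded below by a positive constant and constructed compatibly so that $\lambda_\delta:=m_\delta F_\delta''$ and $m_\delta(F_\delta''+a)$ stay uniformly bounded below (using \textbf{[A2]}, \textbf{[A3]}, \textbf{[A4]}). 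For this regularized system, \cite{SDM} yields a weak solution $(\varphi_\delta,\textbf{u}_\delta)$.

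The second step is to derive the energy estimate \eqref{eq1.4} uniformly in $\delta$. Testing the Cahn--Hilliard equation \eqref{eq7} with $\mu_\delta$ and the Brinkman equation \eqref{eq8} with $\textbf{u}_\delta$ and adding, the coupling terms cancel since $(\textbf{u}_\delta\cdot\nabla\varphi_\delta,\mu_\delta)=(\mu_\delta\nabla\varphi_\delta,\textbf{u}_\delta)$; combined with the identity $\frac{d}{dt}\big(\mathcal{E}_\delta+\tfrac12\int_\Omega a\varphi_\delta^2\big)=\langle\varphi_\delta',\mu_\delta\rangle$ this gives $\frac{d}{dt}\big(\mathcal{E}_\delta+\tfrac12\int_\Omega a\varphi_\delta^2\big)+\|\sqrt{m_\delta}\nabla\mu_\delta\|^2+\|\sqrt{\nu}\nabla\textbf{u}_\delta\|^2+\eta\|\textbf{u}_\delta\|^2=\langle\textbf{h},\textbf{u}_\delta\rangle$. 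Absorbing $\langle\textbf{h},\textbf{u}_\delta\rangle$ by Young's inequality and the coercivity $\nu\ge\nu_0$ from \textbf{[N]}, and using the coercivity of the free energy coming from \textbf{[A4]} to recover the $\|\varphi_\delta\|^2$ term, a Gronwall argument yields \eqref{eq1.4} with constants independent of $\delta$. This produces uniform bounds for $\varphi_\delta$ in $L^\infty(0,T;H)\cap L^2(0,T;V)$, for $\textbf{u}_\delta$ in $L^2(0,T;\mathbb{V}_{div})$, for the flux $m_\delta(\varphi_\delta)\nabla\mu_\delta$ in $L^2(0,T;H)$, and hence for $\varphi_\delta'$ in $L^2(0,T;V')$. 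In parallel, testing \eqref{eq7} with $M_\delta'(\varphi_\delta)$ (where $m_\delta M_\delta''=1$) gives a uniform bound on $\int_\Omega M_\delta(\varphi_\delta)$, the estimate that will ultimately enforce $|\varphi|\le1$.

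Next I would pass to the limit $\delta\to0$. The uniform bounds together with the Aubin--Lions--Simon lemma give, up to a subsequence, $\varphi_\delta\to\varphi$ strongly in $L^2(0,T;H)$ and a.e.\ in $Q$, $\textbf{u}_\delta\rightharpoonup\textbf{u}$ weakly in $L^2(0,T;\mathbb{V}_{div})$, and weak convergence of the flux. The convective term, written in divergence form $(\textbf{u}_\delta\cdot\nabla\varphi_\delta,\psi)=-(\varphi_\delta\textbf{u}_\delta,\nabla\psi)$, passes to the limit by combining strong convergence of $\varphi_\delta$ in $L^2(Q)$ with weak convergence of $\textbf{u}_\delta$. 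Rewriting the flux through the reformulation \eqref{eq1.13}, i.e.\ $m_\delta(\varphi_\delta)\nabla\mu_\delta=\lambda_\delta(\varphi_\delta)\nabla\varphi_\delta+m_\delta(\varphi_\delta)\big((\nabla a)\varphi_\delta-\nabla J*\varphi_\delta\big)$, lets me identify each limit term using the uniform boundedness of $\lambda_\delta$ and the a.e.\ convergence of $\varphi_\delta$. The constraint $|\varphi|\le1$ then follows from the uniform bound on $\int_\Omega M_\delta(\varphi_\delta)$: since $M_\delta\nearrow M$ and $M(r)=+\infty$ for $|r|\ge1$, Fatou's lemma forces $M(\varphi)\in L^1(Q)$, whence $|\varphi|\le1$ a.e. \emph{I expect the identification of the degenerate-mobility flux, together with the control of the nonlocal terms $(\nabla a)\varphi$ and $\nabla J*\varphi$ under vanishing mobility near $\pm1$, to be the main obstacle}; the reformulation \eqref{eq1.13}--\eqref{eq1.15} and the uniform boundedness of $\lambda$ are precisely what make it tractable.

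Finally, for the additional regularity under $\varphi_0\in L^\infty(\Omega)$ and $\textbf{h}\in L^\infty(0,T;\mathbb{V}_{div}')$, note that $|\varphi|\le1$ already gives $\varphi\in L^\infty(Q)$. For the velocity I would exploit that \eqref{eq8} is, for a.e.\ fixed $t$, a stationary elliptic problem. Testing with $\textbf{v}=\textbf{u}(t)$ and rewriting the capillary term via $\mu\nabla\varphi=\nabla\big(F(\varphi)+\tfrac{a}{2}\varphi^2\big)-\tfrac{\varphi^2}{2}\nabla a-(J*\varphi)\nabla\varphi$, the gradient part drops against the divergence-free $\textbf{u}(t)$, and an integration by parts turns the last term into $(\varphi(\nabla J*\varphi),\textbf{u}(t))$; using $|\varphi|\le1$ and \textbf{[J]} both remaining terms are bounded by $C\|\textbf{u}(t)\|$. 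Coercivity $\nu\ge\nu_0$ and Young's inequality then give $\|\textbf{u}(t)\|_{\mathbb{V}_{div}}^2\le C\big(1+\|\textbf{h}(t)\|_{\mathbb{V}_{div}'}^2\big)$ for a.e.\ $t$, and taking the essential supremum over $t$ yields $\textbf{u}\in L^\infty(0,T;\mathbb{V}_{div})$.
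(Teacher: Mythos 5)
Your overall strategy is the same as the paper's: the paper proves this proposition by adapting the approximation scheme of \cite{CSM} (Theorem 3.1), i.e.\ regularize $(F,m)$ by $(F_\delta,m_\delta)$, solve the resulting regular-potential CHB system, derive the energy and entropy ($M_\delta$) estimates uniformly in $\delta$, and pass to the limit via compactness, with the additional velocity regularity obtained from the stationarity of the Brinkman equation exactly as in your last paragraph. Your energy identity, the cancellation of the coupling terms, the Aubin--Lions compactness step, and the $L^\infty(0,T;\mathbb{V}_{div})$ estimate for $\textbf{u}$ are all correct. However, two concrete steps fail as written.

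First, your construction of $F_\delta$ by freezing $F''$ at the levels $F''(\pm(1-\delta))$ produces a potential with \emph{bounded} second derivative, and then $[\textbf{H2}]$ is violated even for fixed $\delta$: no bounded function can dominate $c_1|s|^{2q}-c_2$ for all $s\in\mathbb{R}$ with $q>0$, so the existence theory of \cite{SDM} does not apply to your regularized system. The repair is the one the paper itself uses in Section 3.1 (following \cite{MEL}, \cite{CSM}): extend by cubic/quartic polynomials, so that $F_\delta''$ grows linearly at infinity and $[\textbf{H2}]$ holds with $q=\tfrac12$. Second, your mechanism for $|\varphi|\le 1$ is flawed on two counts. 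Even in the degenerate (logarithmic) case the limit entropy $M$ is \emph{finite} at $\pm1$: for $m(r)=1-r^2$ one computes $M(r)=\tfrac12\big[(1+r)\log(1+r)+(1-r)\log(1-r)\big]$, so $M(\pm1)=\log 2$; the quantity that blows up is $M_\delta(r)$ for $|r|>1$ as $\delta\to0$ (because $M_\delta''\approx 1/m(1\mp\delta)$ there), and it is this divergence, not ``$M(r)=+\infty$ for $|r|\ge1$'', that Fatou's lemma must exploit. More seriously, for the double obstacle potential the proposition pairs $F_{do}$ with the \emph{non-degenerate} cutoff mobility \eqref{eq1.11}, so $M$ and all the $M_\delta$ are uniformly bounded and the entropy estimate yields no constraint at all; in that case $|\varphi|\le1$ must come instead from the uniform bound on $\int_\Omega F_{do,\delta}(\varphi_\delta(t))\,dx$ supplied by the energy estimate together with $\beta_{do,\delta}(r)\to\infty$ pointwise for $|r|>1$. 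Finally, a small slip: your flux identity omits the term $m_\delta(\varphi_\delta)\,a\,\nabla\varphi_\delta$ (compare \eqref{eq1.13}--\eqref{eq1.15}, where $\widetilde{B}$ carries $a\,b(s)$); this is harmless for the limit passage since that coefficient is also uniformly bounded, but the identity as stated is incorrect.
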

\begin{proof} 
The existence of a weak solution can be proved by adopting the techniques used in [\cite{CSM} Theorem 3.1].  
In \cite{CSM}, for each $\epsilon > 0$,  authors consider  $(\nu_\epsilon, F_\epsilon, m_\epsilon)$, approximations to the viscosity $\nu = 0$, the singular potential $F$ and a compatible mobility function $m$ respectively in the CHHS system. Finally passing to the limit  in the solution of the approximated CHB system  corresponding to approximations $(\nu_\epsilon, F_\epsilon, m_\epsilon)$, they obtained a solution to the nonlocal CHHS system with potential $F$ and mobility $m$. Thus their interim result  would give existence of a weak solution for CHB system for fixed $\nu_\epsilon$.
 In our case we  consider a non-constant viscosity function that satisfies $\textbf{[N]}$ that is $\nu$ being bounded, one can still conclude the result for the weak solution of \eqref{eq1}-\eqref{eq07}  by similar arguments as in [\cite{CSM} Theorem 3.1.]
 
 Further, the energy estimate \eqref{eq1.4} and an additional regularity of $\varphi$ can be proved by adopting the techniques used in \cite{CSM} and \cite{SFG}.
\end{proof}

To study the optimal control problem for the system, the weak solution regularity is not sufficient. In the next section we  discuss the existence of a strong solution to the system \eqref{eq1}-\eqref{eq07} by deriving a-priori estimates. This also helps us to prove stability of the solutions in higher regular spaces  which in turn will  be useful later while studying OCP.

\section{Existence of a Strong Solution to the Nonlocal CHB system with a Singular Potential and Variable Mobility}

In this section, we prove the existence of a strong solution to the system \eqref{eq1}-\eqref{eq07} with a singular potential and non-constant mobility.  For the proof, we will use the strong solution results available in the literature for the regular potential case \cite{SDM}, for constant viscosity and for $d =2$. Hence now onwards we will restrict ourselves to a system in dimension 2 with constant viscosity. 
For the nonlocal CHB system with regular potential even the weak solution result is open in dimension 3.  In the following subsection, we introduce regular approximations to the potential $F_\delta$, such that it satisfies assumptions \textbf{[H1]} - \textbf{[H3]}. We consider the system with  approximated regular potentials and obtain the existence of a strong solution for this approximated system using results available in the literature. Finally, we prove  the solution to the system with singular potential   is the limit of solutions to the approximated system, which is the most crucial step of the proof.  

\subsection{Approximations to Double Obstacle and Logarithmic  Potential:}
To prove the existence of a weak solution, the $C^2$ approximations to the singular potential that were devised in \cite{MEL} are used. One can also use the $C^{2,1}_{loc}$ approximations from \cite{CSM}.
Since we are going to use the existence result from \cite{SDM} to prove the existence of a strong solution, $C^2$ approximations are not sufficient. Here we will introduce $C^3$ functions which approximate singular potentials $F_{do}$ and $F_{log}$. 
\\Consider the Double obstacle potential, 
\begin{align}
    F_{do}(r)&= F_1(r) +F_2(r),\\
  \text{ where, }  F_1(r) &= \begin{cases}
        \frac{r^2}{2}, \text{ for } |r|\leq 1,\\
        \infty, \text{ for } |r|> 1,\\
    \end{cases}\nonumber\\
    F_2(r) &= \frac{(1-2r^2)}{2}.\nonumber
\end{align}
We will approximate $F_1$ using $C^3$ functions, say $F_{1,\delta}, \delta>0$ so that $F_{do,\delta} = F_{1,\delta}+F_2$ satisfies assumptions [\textbf{H1}]-[\textbf{H3}]. We will approximate $F_{do}$ by the following functions,
\begin{equation}\label{eq0082}
    F_{do,\delta}(r) := \Big(\beta_{do,\delta}(r)+\frac{r^2}{2}\Big) +\frac{1}{2}(1-2r^2),
\end{equation}
Where $\beta_{do,\delta}(r)$ is defined as follows.
\begin{equation*}
\beta_{\mathrm{do}, \delta}(r)= \begin{cases}\frac{4}{\delta^2}\left(r-\left(1+\frac{\delta}{2}\right)\right)^3+\left(r-\left(1+\frac{\delta}{2}\right)\right) & \text { for } r \geq 1+\delta, \\ \frac{1}{\delta^3}(r-1)^4 & \text { for } r \in(1,1+\delta), \\ 0 & \text { for }|r| \leq 1, \\ \frac{1}{\delta^3}(r+1)^4 & \text { for } r \in(-1-\delta,-1), \\ -\frac{4}{\delta^2}\left(r+\left(1+\frac{\delta}{2}\right)\right)^3-\left(r+\left(1+\frac{\delta}{2}\right)\right) & \text { for } r \leq-1-\delta.\end{cases}
\end{equation*}

$$\beta_{\mathrm{do}, \delta}(r) \longrightarrow \mathbb{I}_{[-1,1]}(r):=\begin{cases} 0 & \text { for } r\in [-1,1] \\\infty  & \text { otherwise. } 
\end{cases} $$ 
pointwise as $\delta \rightarrow 0$ on $\mathbb{R}$ and uniformly on $[-1,1]$. Therefore,
$$ F_{do,\delta} \rightarrow F_{do} \text{ uniformly in } [-1,1] \text{ as } \delta\rightarrow 0.$$

Similarly, for the logarithmic potential,
\begin{align}
    F_{log}(r)&:= F_1(r) +F_2(r),\\
   F_1(r) &= \frac{\theta}{2}\big((1+r)log(1+r)+(1-r)log(1-r)\big),\nonumber\\
    F_2(r) &= \theta_c\frac{(1-r^2)}{2}.\nonumber
\end{align}

As in the case of $F_{do}$, we will approximate the singular part, $F_1$ by $F_{1,\delta} \in C^3.$
For each $\delta>0, F_\delta= F_{1,\delta}+ F_2$ satisfies assumptions [\textbf{H1}]-[\textbf{H3}]. A particular choice of regular approximations to the logarithmic potential in $C^3$ is given by,
for $\delta >0 $,
\begin{equation}\label{eq0084}
    F_{\log , \delta}(r) = F_{1 , \delta}(r) + F_2(r),
\end{equation}
\begin{equation*}
F_{1 , \delta}(r)= \begin{cases}F_{1 }(1-\delta)+F_{1 }^{\prime}(1-\delta)(r-(1-\delta))  +\frac{1}{2} F_{1 }^{\prime \prime}(1-\delta)(r-(1-\delta))^2 &\\ + \frac{1}{6} F_{1 }^{\prime \prime\prime}(1-\delta)(r-(1-\delta))^3 & \text { for } r \geq 1-\delta, \\ F_{1 }(r) & \text { for }|r| \leq 1-\delta, \\ F_{1 }(\delta-1)+ F_{1}^{\prime}(\delta-1)(r-(\delta-1))  +\frac{1}{2} F_{1 }^{\prime \prime}(\delta-1)(r-(\delta-1))^2 & \\ +\frac{1}{6} F_{1 }^{\prime \prime \prime}(\delta-1)(r-(\delta-1))^3 & \text { for } r \leq-1+\delta. \end{cases}
\end{equation*}
\\
Since the interval $(-1+\delta, 1-\delta)$ increases to (-1,1) as $\delta \rightarrow 0$, $F_{\log , \delta}(r) \rightarrow F_{log}(r)$ uniformly on (-1,1). 

Further, we will choose $m\in C^1[-1,1]$ so that \textbf{[A1]},  \textbf{[A2]} hold true. Since $F_{log}''$ blows up near points 1 and -1, we will choose a degenerate mobility that degenerates precisely at these points. We can find a sequence of non-degenerate mobility functions that approximate $m$. For the potential $F_{do}$, since all its derivatives exist and are bounded on $(-1,1)$, it is sufficient to  choose a mobility function that is bounded below by a positive constant. 

Using these approximations of potential function and mobility we will  find approximate solutions of the system \eqref{eq1}-\eqref{eq07}. The existence of approximate solutions will be followed by existence of strong solution results for the nonlocal CHB system with a regular potential and a constant mobility proved in \cite[Theorem 2.9]{SDM}.

Our main aim of the paper is to study the distributed optimal control problem. Hence 
We choose the source term in an appropriate space which would be useful to later on study the strong solution and control problem.  \\

Define $\mathcal{U}:=\{\textbf{h}\in L^\infty(0,T;\mathbb{G}_{div})|\textbf{h}_t\in L^2(0,T;\mathbb{V}_{div}')\}$ with norm,
$$\lVert \textbf{h}\rVert_\mathcal{U}:= \lVert \textbf{h}\rVert_{L^\infty(0,T;\mathbb{G}_{div})}+\lVert \textbf{h}_t\rVert_{L^2(0,T;\mathbb{V}_{div}')}.$$

\begin{theorem}[\textbf{Existence of a strong solution to the nonlocal CHB system with a singular potential and non-constant mobility}] 
 Let $\varphi_0 \in H^2(\Omega) \cap L^\infty(\Omega),$ $\textbf{h}\in \mathcal{U}$ and $J \in W^{2,1}(\Omega).$ Assume $[\textbf{J}], [\textbf{A1}] -[\textbf{A4}]$ are satisfied. And in addition assume that $m \in C^1[-1,1]$ and $\lambda \in C^1[-1,1]$. Then there exists a strong solution $(\varphi, \textbf{u})$ for the system \eqref{eq1}-\eqref{eq07} with potential either $F_{do}$ or $F_{log}$ on $[0,T]$, that has the following regularity.
\begin{align}
\varphi &\in L^\infty(0,T;H^2(\Omega)) ,\label{eq2.08}\\
\varphi' &\in L^2(0,T,V)\cap L^\infty(0,T;H),\label{eq2.09}\\
\textbf{u} &\in L^2(0,T;\textbf{H}^2(\Omega)).\label{eq2.010}
\end{align}
\end{theorem}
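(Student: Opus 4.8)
The plan is to realise the strong solution as a limit of solutions of the regularised systems of Section~3.1, deriving all bounds uniformly in the regularisation parameter. Fix $\delta>0$ and replace $F$ by its $C^3$ approximation $F_\delta$ (namely $F_{do,\delta}$ from \eqref{eq0082} or $F_{log,\delta}$ from \eqref{eq0084}), which satisfies \textbf{[H1]}--\textbf{[H3]}; in the logarithmic case I would in addition replace the degenerate mobility by a non-degenerate $m_\epsilon\in C^1[-1,1]$ bounded below by a positive constant, so that $\lambda_\delta:=m_\epsilon F_\delta''\in C^1$ and the map $\widetilde{B}_\delta$ built from \eqref{eq1.14}--\eqref{eq1.15} are smooth and non-degenerate. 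For this regularised problem (regular potential, non-degenerate mobility, $d=2$, constant viscosity) the strong-solution result \cite[Theorem 2.9]{SDM}, possibly after a routine adaptation to the non-degenerate variable mobility, furnishes for each $\delta$ a solution $(\varphi_\delta,\textbf{u}_\delta)$ with the regularity \eqref{eq2.08}--\eqref{eq2.010}. Everything then reduces to making these bounds independent of $\delta$ (and $\epsilon$) and passing to the limit.

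For the uniform estimates I would work throughout from the reformulation \eqref{eq1.13}. The energy estimate \eqref{eq1.4}, itself uniform in $\delta$, already bounds $\varphi_\delta$ in $L^\infty(0,T;H)\cap L^2(0,T;V)$ and $\textbf{u}_\delta$ in $L^2(0,T;\mathbb{V}_{div})$ and yields $|\varphi_\delta|\le 1$ a.e.; with the $L^\infty$ data it also gives $\textbf{u}_\delta\in L^\infty(0,T;\mathbb{V}_{div})$. The decisive new estimate is on the time derivative. I would differentiate \eqref{eq1.13} in time and test with $\varphi_\delta'$, and simultaneously differentiate the Brinkman equation \eqref{eq3} in time to control $\textbf{u}_\delta'$ (this is exactly where $\textbf{h}\in\mathcal{U}$, i.e. $\textbf{h}\in L^\infty(0,T;\mathbb{G}_{div})$ with $\textbf{h}_t\in L^2(0,T;\mathbb{V}_{div}')$, is used, the two estimates being closed simultaneously). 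The coercivity $\widetilde{B}_\delta'(s)=\lambda_\delta(s)+a(x)m_\epsilon(s)$, bounded below by a positive constant uniformly in $\delta,\epsilon$ precisely by hypothesis \textbf{[A4]}, makes the principal term $-\int_\Omega\nabla(\widetilde{B}_\delta'(\varphi_\delta)\varphi_\delta')\cdot\nabla\varphi_\delta'$ produce the dissipation $\alpha_1\|\nabla\varphi_\delta'\|^2$, while the quadratic term in $\widetilde{B}_\delta''(\varphi_\delta)\varphi_\delta'\nabla\varphi_\delta$ and the coupling terms $\textbf{u}_\delta'\cdot\nabla\varphi_\delta$ and $\textbf{u}_\delta\cdot\nabla\varphi_\delta'$ are absorbed using the Ladyzhenskaya and Gagliardo--Nirenberg inequalities in two dimensions together with $|\varphi_\delta|\le1$. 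A Gronwall argument then bounds $\varphi_\delta'$ in $L^\infty(0,T;H)\cap L^2(0,T;V)$ uniformly.

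With $\varphi_\delta'$ under control, the $H^2$-bound follows by reading \eqref{eq1.13} as an elliptic equation for $\widetilde{B}_\delta(\varphi_\delta)$: the rearrangement $\Delta\widetilde{B}_\delta(\varphi_\delta)=\varphi_\delta'+\textbf{u}_\delta\cdot\nabla\varphi_\delta-\nabla\cdot\big(m_\epsilon(\varphi_\delta)(\varphi_\delta\nabla a-\nabla J\ast\varphi_\delta)\big)+\nabla\cdot\big(b(\varphi_\delta)\nabla a\big)$ has right-hand side bounded in $L^\infty(0,T;H)$ by the previous step, the bounds on $\textbf{u}_\delta$ and $\nabla\varphi_\delta$, the assumption $J\in W^{2,1}(\Omega)$ and $m_\epsilon,\lambda_\delta\in C^1$. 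With the homogeneous Neumann flux condition this gives $\widetilde{B}_\delta(\varphi_\delta)\in L^\infty(0,T;H^2)$, and since $\widetilde{B}_\delta$ is a $C^1$ diffeomorphism with derivative bounded below by $\alpha_1$ (the explicit $x$-dependence through $a$ being handled by $J\in W^{2,1}$) this transfers to $\varphi_\delta\in L^\infty(0,T;H^2)$, uniformly. For the velocity, the constant-viscosity Brinkman equation \eqref{eq3} is, at each time, a Stokes-type system with right-hand side $\mu_\delta\nabla\varphi_\delta+\textbf{h}$, where $\mu_\delta=a\varphi_\delta-J\ast\varphi_\delta+F_\delta'(\varphi_\delta)$; the $H^2$-bound on $\varphi_\delta$ makes $\mu_\delta\nabla\varphi_\delta$ bounded in $L^2(0,T;H)$, whence $H^2$ elliptic (Stokes) regularity gives $\textbf{u}_\delta\in L^2(0,T;\textbf{H}^2(\Omega))$ uniformly.

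Finally I would pass to the limit $\delta\to0$ (and $\epsilon\to0$). The uniform bounds and the Aubin--Lions--Simon lemma produce a subsequence with $\varphi_\delta\to\varphi$ strongly in $C([0,T];H)$ and in $L^2(0,T;V)$, hence a.e.\ in $Q$, together with the weak-$*$ limits in the spaces \eqref{eq2.08}--\eqref{eq2.010}, and $\textbf{u}_\delta\rightharpoonup\textbf{u}$. The crucial and most delicate step is the identification of the singular terms in the limit: since $F_\delta''\to F''$ blows up at $\pm1$ one cannot pass to the limit in $F_\delta''(\varphi_\delta)$ alone, and must instead do so in the \emph{products} $\lambda_\delta(\varphi_\delta)\to\lambda(\varphi)$ and $m_\epsilon(\varphi_\delta)F_\delta'(\varphi_\delta)$, which remain bounded exactly because the compatible mobility cancels the singularity---this is the purpose of \textbf{[A1]}--\textbf{[A4]} and of the reformulation \eqref{eq1.13}. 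Using $|\varphi|\le1$ a.e., the a.e.\ convergence of $\varphi_\delta$, continuity of $\lambda,m,b$ on $[-1,1]$ and dominated convergence, each term converges to its singular-potential counterpart, so that $(\varphi,\textbf{u})$ solves \eqref{eq1}--\eqref{eq07} with the regularity \eqref{eq2.08}--\eqref{eq2.010}. The main obstacle is thus twofold: closing the uniform time-derivative estimate without letting the singularity of $F''$ appear---which is possible only through the product structure $\lambda=mF''$ and \textbf{[A4]}---and then justifying the passage to the limit in these products; the remaining elliptic bootstraps are comparatively routine.
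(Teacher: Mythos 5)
Your overall strategy is the same as the paper's: approximate $F$ by the $C^3$ potentials of Section~3.1 and the mobility by non-degenerate approximations, invoke \cite[Theorem 2.9]{SDM} for strong solutions of the approximate system, derive $\delta$-uniform estimates through the transformed variable $\widetilde{B}$ whose coercivity is exactly $[\textbf{A4}]$, and pass to the limit via Aubin--Lions together with the \cite{CEG}-type identification of the products $\lambda_\delta(\varphi_\delta)$, $m_\delta(\varphi_\delta)$ (correctly flagged as the reason the singularity never appears alone). However, there is one step that fails as written: your velocity estimate. You claim that the $H^2$-bound on $\varphi_\delta$ makes $\mu_\delta\nabla\varphi_\delta$ bounded in $L^2(0,T;H)$, but $\mu_\delta$ contains $F_\delta'(\varphi_\delta)$, and this is \emph{not} uniformly controlled in $\delta$: for the logarithmic approximation $\sup_{|r|\leq 1}|F_{\log,\delta}'(r)|\sim \theta\log(1/\delta)\rightarrow\infty$, no uniform separation of $\varphi_\delta$ from $\pm 1$ is available (and at the approximate level $\varphi_\delta$ need not stay in $[-1,1]$, where $F_{do,\delta}'$ blows up like $\delta^{-3}$), while the energy estimate only controls $\sqrt{m_\delta(\varphi_\delta)}\nabla\mu_\delta$, never $\mu_\delta$ itself. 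The paper avoids estimating $\mu_\delta$ altogether: in \eqref{eq019} the capillary force is rewritten so that the pure gradients $\nabla F_\delta(\varphi_\delta)$ and $\nabla(a\varphi_\delta^2/2)$ are absorbed into the pressure, leaving only $-\frac{\nabla a}{2}\varphi_\delta^2+(J*\varphi_\delta)\nabla\varphi_\delta$, which is bounded by the energy estimate alone; testing with $-\Delta\textbf{u}_\delta$ then gives $\textbf{u}_\delta\in L^\infty(0,T;\textbf{H}^2)$ \emph{first}, and the resulting bound $\lVert\textbf{u}_\delta\rVert_{L^\infty}\leq C$ from Agmon's inequality \eqref{eq093} is used as input in all subsequent $\varphi_\delta$ estimates.

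The same repair is required in your time-differentiated Brinkman equation: differentiating $\mu_\delta\nabla\varphi_\delta$ in time produces $F_\delta''(\varphi_\delta)\varphi_\delta'\nabla\varphi_\delta$, where $F_\delta''$ appears \emph{without} the compensating factor $m_\delta$, so it is not uniformly bounded; after the rewriting, only nonsingular quantities occur, and the paper's difference-quotient estimate \eqref{eq2.30} gives $\lVert D_k\textbf{u}_\delta\rVert_{\mathbb{V}_{div}}\leq C(\lVert D_k\varphi_\delta\rVert + \lVert D_k\textbf{h}\rVert_{\mathbb{V}_{div}'})$ (difference quotients, following \cite{DAG}, also make the formal time differentiation rigorous, since $\textbf{u}_\delta'$ and $\pi_\delta'$ are not a priori known to exist). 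A lesser ordering issue: you pass from the energy estimate directly to the time-differentiated phase equation, but absorbing the $\widetilde{B}''$-commutator and coupling terms by Ladyzhenskaya/Gagliardo--Nirenberg in 2D needs the intermediate bounds $\varphi_\delta\in L^\infty(0,T;V)$, $\varphi_\delta'\in L^2(0,T;H)$, $\widetilde{B}(\varphi_\delta)\in L^2(0,T;H^2)$ and $\varphi_\delta\in L^2(0,T;H^2)$, which the paper first obtains by testing the undifferentiated equation with $\widetilde{B}(\varphi_\delta)'$ and using the elliptic estimate \eqref{eq028}. Your choice of testing the differentiated equation with $\varphi_\delta'$ rather than $\widetilde{B}(\varphi_\delta)'$ is an inessential variant, since $m',\lambda'$ are bounded under the $C^1$ hypothesis; with the force rewriting and the corrected ordering, your argument matches the paper's proof.
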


\begin{proof}We are going to prove the theorem in two steps. We first use the approximations detailed above for the potential and mobility and write an approximate problem satisfied by the corresponding $(\varphi_\delta, \textbf{u}_\delta)$. We use existing results from \cite{SDM} to show the existence of a strong solution to the approximated problem. The most important contribution from our side is the crucial second step where we have proved convergence of $(\varphi_\delta, \textbf{u}_\delta)$ to the strong solution of the nonlocal CHB system with a singular potential. 
\\\\Consider the following approximated problem, for $\delta>0$,
\begin{align}
\varphi'_{\delta}+\nabla\cdot(\textbf{u}_{\delta}\varphi_{\delta} )\ &=\ \nabla\cdot( m_\delta(\varphi_\delta)\nabla\mu_{\delta}),     \; \text{in}\;\; Q,\label{eq23a}
\\\mu_\delta &:=\ a\varphi_\delta- J \ast\varphi_\delta + F_\delta '\left(\varphi_\delta\right),    \; \text{in}\;\; Q,\label{eq24a}
\\-\nabla\cdot\left(\nu\left(\varphi_\delta\right)\nabla \textbf{u}_\delta \right) + \eta \textbf{u}_\delta + \nabla \pi_\delta &=\ \mu_\delta\nabla\varphi_\delta + \textbf{h},   \; \text{in}\;\; Q,\label{eq25a}
\\ \nabla\cdot \textbf{u}_\delta  & =  0, \, \text{in}  \;\; Q.\label{eq26a}
\\ (m_\delta(\varphi_\delta)\nabla\mu_\delta)\cdot \textbf{n} &= 0 ,\;\; \text{on} \; \Sigma, \label{eq27a}
\\\textbf{u}_\delta &= 0,\; \; \text{on} \;   \Sigma,\label{eq28a}
\\ \varphi_\delta\left(. , 0 \right) &=\varphi_{0},\; \; \text{in} \; \; \Omega.\label{eq29a}
\end{align}

Here $F_\delta,\hspace{.25cm}(\delta >0)$ represents regular approximations detailed in section 3.1 which satisfy assumptions \textbf{[H1]}-\textbf{[H3]} and $m_\delta,$ approximations to the  mobility function. 
The existence of a strong solution to the system \eqref{eq23a}-\eqref{eq29a} follows by similar arguments as in the proof of \cite[Theorem 2.9]{SDM}. 
\\In \cite[Theorem 2.9]{SDM} authors assume a constant mobility case. For our problem, we consider a non-constant mobility function which is bounded below by a positive constant.
That is, $\exists \ m_0 > 0$ such that $m_\delta(s)\geq m_0, \forall s \in [-1,1]$. This is sufficient to use  the arguments of \cite[Theorem 2.9]{SDM} to get a strong solution of the system \eqref{eq23a}-\eqref{eq29a}  with the regularity:
\begin{align}
\varphi_\delta &\in L^\infty(0,T;H^2(\Omega)),\label{eq3.15a}\\
\varphi_\delta' &\in L^\infty(0,T;H) \cap L^2(0,T,V),\label{eq3.16a}\\
\textbf{u}_\delta &\in L^2(0,T;\textbf{H}^2(\Omega)),\label{eq3.17a}
\end{align}
for each $\delta >0$.
Now we will derive uniform bounds for $(\varphi_\delta, \textbf{u}_\delta)$ in appropriate spaces so that we can extract convergent subsequences and further show that the  limit of these subsequences is a solution to the \eqref{eq1}-\eqref{eq07}.
We will introduce the following operators as in \eqref{eq1.13a}-\eqref{eq1.15}.
\begin{align*}
   \lambda_\delta &:= m_\delta(s)F_{1,\delta}''(s), \\
   B(s)&:=\int\limits_{0}^{s}\lambda_{\delta}(\tau)d\tau,\hspace{.5cm} \nabla B(\varphi_{\delta})= \lambda_{\delta}(\varphi_{\delta}) \nabla\varphi_{\delta} ,\\
   b(s)&:=\int\limits_{0}^{s}m_{\delta}(\tau)d\tau,\hspace{.5cm} \nabla b(\varphi_{\delta})= m_{\delta}(\varphi_{\delta}) \nabla\varphi_{\delta},\\
  \widetilde{B}(s)&:= B(s)+ a(\cdot)b(s).
\end{align*}
Then the weak formulation of the approximated problem can be rewritten as follows: $\forall \psi\in V$ and $\textbf{v}\in \mathbb{V}_{div}$,
\begin{align}
    \langle \varphi_\delta',\psi\rangle + (\textbf{u}_\delta\nabla\varphi_\delta,\psi) &= (\Delta \widetilde{B}(\varphi_\delta),\psi) - \Big(m(\varphi_\delta)\big((\nabla a) \varphi_\delta -\nabla J*\varphi_\delta\big),\nabla\psi\Big) +\big(b(\varphi_\delta)\nabla a, \nabla\psi\big),\label{eq018}\\ 
-(\nu\Delta\textbf{u}_{\delta},\textbf{v})+ (\eta \textbf{u}_{\delta},\textbf{v}) &= \Big(\frac{-(\nabla a) \varphi_\delta^2}{2} + (J*\varphi_\delta)\nabla\varphi_\delta , \textbf{v}\Big)+ \langle\textbf{h},\textbf{v}\rangle.\label{eq019}
\end{align}

Now we derive the uniform bounds. In the following calculations $c$ and $C$ denote generic constants which are independent of $\delta$.
\\\\(\textbf{Estimate for $\textbf{u}_\delta$})
\\We use  $\textbf{v} = -\Delta \textbf{u}_{\delta}$ in \eqref{eq019}. By using H\"olders, Young's and Gagliardo-Nirenberg inequalities, we estimate each of its terms  and then by absorbing the term with $\lVert\Delta\textbf{u}_\delta\rVert$ to the L.H.S, we obtain, 
\begin{align*}
    \nu\lVert\Delta \textbf{u}_{\delta}\rVert^2+ \eta\lVert\nabla \textbf{u}_{\delta}\rVert^2 &\leq \Big(\Big|\frac{\nabla a}{2}\Big|_{L^\infty}+ \lVert\nabla J \rVert_{L^1}\Big)\lVert\varphi_\delta\rVert_{L^4}^2\lVert\Delta \textbf{u}_{\delta}\rVert + \lVert \textbf{h}\rVert\lVert \Delta \textbf{u}_{\delta}\rVert,\\
    \frac{\nu}{2}\lVert\Delta \textbf{u}_{\delta}\rVert^2 &\leq c\Big(\Big|\frac{\nabla a}{2}\Big|_{L^\infty}+ \lVert\nabla J \rVert_{L^1}\Big)^2\lVert\varphi_\delta\rVert_{L^4}^4 + c\lVert \textbf{h}\rVert^2,
\end{align*}\vspace{-.5cm}
\begin{align}
    \frac{\nu}{2}\lVert\Delta \textbf{u}_{\delta}\rVert^2 &\leq C(\lVert\varphi_\delta\rVert^2\lVert\nabla\varphi_\delta\rVert^2+ \lVert \textbf{h}\rVert^2).
\end{align}
Combined with the energy estimate \eqref{eq1.4} satisfied by the weak solution, we have,
$$\lVert\Delta \textbf{u}_{\delta}\rVert^2 \leq C.$$
where generic constant $C$ is independent of $\delta.$ 
 Using  \cite[Lemma 3.7]{TEM}, we have the equivalence of norms $\lVert\Delta u\rVert$ and the norm induced by $\textbf{H}^2(\Omega)$ on $\mathbb{V}_{div}\cap \textbf{H}^2(\Omega)$ and hence we get $\textbf{u}_{\delta} \in \mathbb{V}_{div}\cap \textbf{H}^2(\Omega)$. 
Moreover, using Agmon's inequality  we get,
\begin{equation}\label{eq093}
    \lVert\textbf{u}_\delta\rVert_{L^\infty} \leq c\lVert\textbf{u}_\delta\rVert_{\textbf{V}_{div}}^\frac{1}{2}\lVert\textbf{u}_\delta\rVert_{\textbf{H}^2}^\frac{1}{2} \leq C.
\end{equation}

Also, $ \| \textbf{u}_{\delta} (t) \|_{\textbf{H}^2} \leq C, \ \forall t \in \ [0, T] $ implies that $ {u}_{\delta} \in L^p(0,T;\textbf{H}^2(\Omega)) \ \text{ for } 1\leq p < \infty. $  In particular we have,
\begin{equation}\label{eq93}
\lVert \textbf{u}_{\delta}\rVert_{L^2(0,T;\textbf{H}^2(\Omega))} \leq C.
\end{equation}

(\textbf{Estimate for $\varphi_\delta$})
\\Using the test function $\psi = \widetilde{B}(\varphi_\delta)' = \big(m(\varphi_\delta)a+\lambda(\varphi_\delta)\big)\varphi_\delta'$ in \eqref{eq018} we get,
\begin{align}\label{eq025}
\frac{1}{2}\frac{d}{dt}\big[\lVert\nabla\widetilde{B}(\varphi_\delta)\rVert^2 &+ 2\big(m(\varphi_\delta)\big((\nabla a)\varphi_\delta-\nabla J*\varphi_\delta\big), \nabla\widetilde{B}(\varphi_\delta)\big) -2\big((\nabla a)b(\varphi_\delta),\nabla\widetilde{B}(\varphi_\delta)\big)\big] + (\varphi_\delta',(m(\varphi_\delta)a+\lambda(\varphi_\delta))\varphi_\delta') \nonumber\\  & = -(\textbf{u}_\delta\varphi_\delta', \nabla\widetilde{B}(\varphi_\delta)) + \big(m'(\varphi_\delta)\varphi_\delta'\big((\nabla a)\varphi_\delta-\nabla J*\varphi_\delta\big),\nabla\widetilde{B}(\varphi_\delta)\big) - \big( (\nabla a)m(\varphi_\delta)\varphi_\delta', \nabla\widetilde{B}(\varphi_\delta)\big)\nonumber\\ &\hspace{.5cm} + \big(m(\varphi_\delta)\big((\nabla a)\varphi_\delta'-\nabla J*\varphi_\delta'\big),\nabla\widetilde{B}(\varphi_\delta)\big).
\end{align}
Define  $\Phi = \lVert\nabla\widetilde{B}(\varphi_\delta)\rVert^2 + 2\big(m(\varphi_\delta)((\nabla a)\varphi_\delta-\nabla J*\varphi_\delta), \nabla\widetilde{B}(\varphi_\delta)\big) -2\big((\nabla a)b(\varphi_\delta),\nabla\widetilde{B}(\varphi_\delta)\big).$
By a direct calculation using H\"olders and Young's inequalities we get,
\begin{equation}\label{eq0026}
    \frac{1}{2}\lVert\nabla\widetilde{B}(\varphi_\delta)\rVert^2-c\lVert\varphi_\delta\rVert^2 \leq \Phi(t) \leq C(\lVert\nabla\widetilde{B}(\varphi_\delta)\rVert^2 + \lVert\varphi_\delta\rVert^2 ).
\end{equation}
In \eqref{eq025}, we estimate each of its terms on R.H.S by applying H\"older's, Young's, Ladyzhenskaya and Gagliardo-Nirenberg inequalities.
\begin{align*}
    |\big(\textbf{u}_\delta\varphi_\delta', \nabla\widetilde{B}(\varphi_\delta)\big)| &\leq \lVert\textbf{u}_\delta\rVert_{L^4}\lVert\varphi_\delta'\rVert \lVert\nabla\widetilde{B}(\varphi_\delta)\rVert_{L^4} \nonumber\\
    &\leq \frac{\alpha_1}{8} \lVert\varphi_\delta'\rVert^2 + c\lVert\nabla\textbf{u}_\delta\rVert^2 \lVert\nabla\widetilde{B}(\varphi_\delta)\rVert\lVert\widetilde{B}(\varphi_\delta)\rVert_{H^2}\nonumber\\
    &\leq \frac{\alpha_1}{8} \lVert\varphi_\delta'\rVert^2 + \epsilon\lVert\widetilde{B}(\varphi_\delta)\rVert_{H^2}^2 + c\lVert\nabla\textbf{u}_\delta\rVert^4 \lVert\nabla\widetilde{B}(\varphi_\delta)\rVert^2,\\
    |\big(m'(\varphi_\delta)\varphi_\delta'((\nabla a)\varphi_\delta-\nabla J*\varphi_\delta),\nabla\widetilde{B}(\varphi_\delta)\big)| &\leq |m'|_{L^\infty}(\lVert\nabla a\rVert_{L^\infty}+\lVert J\rVert_{L^1})\lVert\varphi_\delta\rVert_{L^\infty}\lVert\varphi_\delta'\rVert\lVert\nabla\widetilde{B}(\varphi_\delta)\rVert \nonumber\\
    &\leq \frac{\alpha_1}{8}\lVert\varphi_\delta'\rVert^2 + c\lVert\nabla\widetilde{B}(\varphi_\delta)\rVert^2, \nonumber\\
    |\big(m(\varphi_\delta)((\nabla a)\varphi_\delta'-\nabla J*\varphi_\delta'),\nabla\widetilde{B}(\varphi_\delta)\big)| &\leq |m|_{L^\infty}(\lVert\nabla a\rVert_{L^\infty}+\lVert J\rVert_{L^1})\lVert\varphi_\delta'\rVert\lVert\nabla\widetilde{B}(\varphi_\delta)\rVert \nonumber\\
    &\leq \frac{\alpha_1}{8}\lVert\varphi_\delta'\rVert^2 + c\lVert\nabla\widetilde{B}(\varphi_\delta)\rVert^2, \nonumber\\
    |\big( (\nabla a)m(\varphi)\varphi', \nabla\widetilde{B}(\varphi_\delta)\big)| &\leq |m|_{L^\infty}\lVert\nabla a\rVert_{L^\infty}\lVert\varphi_\delta'\rVert\lVert\nabla\widetilde{B}(\varphi_\delta)\rVert \nonumber\\
    &\leq \frac{\alpha_1}{8}\lVert\varphi_\delta'\rVert^2 + c\lVert\nabla\widetilde{B}(\varphi_\delta)\rVert^2.
\end{align*}
Substituting all the above estimates in \eqref{eq025}, we get,
\begin{align}\label{eq026}
    \frac{1}{2}\frac{d}{dt}\Phi + \frac{\alpha_1}{2}\lVert\varphi_\delta'\rVert^2  \leq \epsilon\lVert\widetilde{B}(\varphi_\delta)\rVert_{H^2}^2 + C(1+\lVert\nabla\textbf{u}_\delta\rVert^4) \lVert\nabla\widetilde{B}(\varphi_\delta)\rVert^2. 
\end{align}
In order to handle the term $\lVert\nabla\widetilde{B}(\varphi_\delta)\rVert$ in \eqref{eq026}, we estimate this term in terms of $\lVert\varphi_\delta\rVert_{V}$.
Consider the expression, $ \nabla\widetilde{B}(\varphi_\delta) = (m(\varphi_\delta)a+ \lambda(\varphi_\delta))\nabla\varphi_\delta + (\nabla a)b(\varphi_\delta)$. Using the lower bound for $(ma+\lambda)$ from the assumption \textbf{[A4]} and applying H\"olders and Young's inequalities we obtain,
\begin{align*}
    \lVert\nabla\widetilde{B}(\varphi_\delta)\rVert^2 &\geq \alpha_1^2\lVert\nabla\varphi_\delta\rVert^2 - 2(|m|_{L^\infty}\lVert a\rVert_{L^\infty} + |\lambda|_{L^\infty})\lVert\nabla a\rVert_{L^\infty}\lVert b(\varphi_\delta)\rVert\lVert\nabla\varphi_\delta\rVert \nonumber\\
    &\geq \frac{\alpha_1^2}{2}\lVert\nabla\varphi_\delta\rVert^2 - c\lVert b(\varphi_\delta)\rVert^2 \geq \frac{\alpha_1^2}{2}\lVert\nabla\varphi_\delta\rVert^2 - c\lVert \varphi_\delta\rVert^2. 
\end{align*}
Moreover, using the upper bound for $(ma+\lambda)$  that follows from the assumption $m\in C^1[-1,1]$ , $\lambda \in C^1[-1,1],$  for any $\psi \in H$ we will get,
\begin{align*}
 |( \nabla\widetilde{B}(\varphi_\delta), \psi)| &= |\big( (m(\varphi_\delta)a+ \lambda(\varphi_\delta))\nabla\varphi_\delta + (\nabla a)b(\varphi_\delta) ,\psi\big) |   \; \leq C \| \varphi_\delta\|_{V} \| \psi \|.
\end{align*}
gives us an upper bound for $\lVert\nabla\widetilde{B}(\varphi_\delta)\rVert$.
Combining these two estimates, we get
\begin{align}\label{eq027}
    \frac{\alpha_1^2}{2}\lVert\nabla\varphi_\delta\rVert^2 - c\lVert \varphi_\delta\rVert^2 \leq \lVert\nabla\widetilde{B}(\varphi_\delta)\rVert^2 \leq C\lVert\varphi_\delta\rVert_{V}^2.
\end{align}
\\
\textbf{(Estimate for $\lVert\widetilde{B}(\varphi_\delta)\rVert_{H^2}$)}
 We have the following elliptic estimate \cite{FGS},
\begin{equation}\label{eq028}
    \lVert\widetilde{B}(\varphi_\delta)\rVert_{H^2} \leq C(\lVert\Delta\widetilde{B}(\varphi_\delta)\rVert + \lVert\widetilde{B}(\varphi_\delta)\rVert_{V} + \lVert\nabla\widetilde{B}(\varphi_\delta).\textbf{n}\rVert_{H^\frac{1}{2}(\partial\Omega)} ).
\end{equation}

by applying H\"olders and Young's inequalities in \eqref{eq018} and using \eqref{eq093}, we get,
\begin{align*}
    \lVert\Delta\widetilde{B}(\varphi_\delta)\rVert &\leq \lVert\varphi_\delta'\rVert + \lVert\textbf{u}_\delta\rVert_{L^\infty}\lVert\nabla\varphi_\delta\rVert + |m'|_{L^\infty}(\lVert \nabla a\rVert_{L^\infty}+\lVert \nabla J\rVert_{L^1})\lVert\varphi_\delta\rVert_{L^\infty}\lVert\nabla\varphi_\delta\rVert \nonumber \\
    &\hspace{.5cm} + |m|_{L^\infty}(\lVert \Delta a\rVert_{L^\infty}+\lVert \Delta J\rVert_{L^1})\lVert\varphi_\delta\rVert + 2|m|_{L^\infty}\lVert \nabla a\rVert_{L^\infty}\lVert\nabla\varphi_\delta\rVert  +  \lVert \nabla a\rVert_{L^\infty}\lVert b(\varphi_\delta)\rVert \nonumber \\ &\leq \lVert\varphi_\delta'\rVert + c\lVert\varphi_\delta\rVert_{V}.
\end{align*}
We have on the boundary $\partial\Omega$, $ (m\nabla\mu)\cdot\textbf{n} = \big[\nabla\widetilde{B}(\varphi_\delta)- (\nabla a)b(\varphi_\delta) + m(\varphi_\delta)\big((\nabla a)\varphi_\delta-\nabla J*\varphi_\delta\big)\big]\cdot\textbf{n} = 0 $. Using the continuous embedding $V(\Omega)\subset H^\frac{1}{2}(\partial\Omega)$, we get,
\begin{align*}
    \lVert\nabla\widetilde{B}(\varphi_\delta).\textbf{n}\rVert_{H^\frac{1}{2}(\partial\Omega)} &\leq \lVert(\nabla a)b(\varphi_\delta)\cdot\textbf{n}\rVert_{H^\frac{1}{2}(\partial\Omega)} + \lVert m(\varphi_\delta)\big((\nabla a)\varphi_\delta-\nabla J*\varphi_\delta\big)\cdot\textbf{n}\rVert_{H^\frac{1}{2}(\partial\Omega)} \\
    &\leq \lVert(\nabla a)b(\varphi_\delta)\rVert_{V} + \lVert m(\varphi_\delta)\big((\nabla a)\varphi_\delta-\nabla J*\varphi_\delta\big)\rVert_{V} \\
    &\leq c\lVert\varphi_\delta\rVert_{V}.
\end{align*}
Substituting back in \eqref{eq028}, we obtain,
\begin{equation}\label{eq029}
    \lVert\widetilde{B}(\varphi_\delta)\rVert_{H^2} \leq C\big( \lVert\varphi_\delta'\rVert + \lVert\varphi_\delta\rVert_{V} \big).
\end{equation}
Substitute \eqref{eq029} in \eqref{eq026} and use \eqref{eq0026} with an choice of $\epsilon = \frac{\alpha_1}{4C}$ yields,
\begin{align}\label{eq03.28}
     \frac{1}{2}\frac{d}{dt}\Phi + \frac{\alpha_1}{4}\lVert\varphi_\delta'\rVert^2 &\leq  \frac{\alpha_1}{4}\lVert\varphi_\delta\rVert^2  + C(1+\lVert\nabla\textbf{u}_\delta\rVert^4) \lVert\nabla\widetilde{B}(\varphi_\delta)\rVert^2 \nonumber \\
    &\leq  C(1+\lVert\nabla\textbf{u}_\delta\rVert^4) (\Phi + \lVert\varphi_\delta\rVert^2).
\end{align}
 Using test function $\psi = \varphi_\delta $ in \eqref{eq018} and  applying H\"olders and Young's inequalities we get,
\begin{equation*}
    \frac{1}{2}\frac{d}{dt}\lVert\varphi_\delta\rVert^2 \leq c(\lVert\nabla\widetilde{B}(\varphi_\delta)\rVert^2 + \lVert\varphi\rVert_{V}^2).
\end{equation*}
Comparing with \eqref{eq0026}, \eqref{eq027} and adding to \eqref{eq03.28},   
\begin{align}
     \frac{1}{2}\frac{d}{dt}(\Phi + \lVert\varphi_\delta\rVert^2) + \frac{\alpha_1}{4}\lVert\varphi_\delta'\rVert^2 \leq  C(1+\lVert\nabla\textbf{u}_\delta\rVert^4) (\Phi + \lVert\varphi_\delta\rVert^2).
\end{align}
Applying Gronwall's inequality, we obtain, 
\begin{align}\label{eq3.030}
    \Phi(t) + \lVert\varphi_\delta(t)\rVert^2 + \frac{\alpha_1}{4}\lVert\varphi_\delta'\rVert_{L^2(0,T;H)}^2 &\leq C(1+ \gamma(T)) \leq C, 
\end{align}
holds for a.e $t\in [0,T]$, for some $\gamma$, function of $t$.
Now by comparing \eqref{eq027} and \eqref{eq0026} and applying the uniform estimate \eqref{eq3.030}, we obtain,
\begin{align}
\lVert \varphi_\delta\rVert_{L^\infty(0,T;V)} &\leq C.\label{eq2.26}
\end{align}
Further, substituting uniform estimates \eqref{eq3.030} and \eqref{eq2.26} in \eqref{eq029}, we get,
\begin{align}
\lVert\widetilde{B}(\varphi_\delta)\rVert_{L^2(0,T;H^2)} &\leq C.\label{eq2.27}
\end{align}
\\\textbf{(Estimate for $\lVert\varphi_\delta\rVert_{H^2}$)}
\\\\We have in a weak sense, $\partial_i\widetilde{B}(\varphi_\delta) = \big(m(\varphi_\delta)a + \lambda(\varphi_\delta)\big)\partial_i\varphi_\delta + (\partial_ia)b(\varphi_\delta)$, $i=1,2.$ 
 And the second order partial derivative, for $i, j \in \{1,2\}$, 
$$\partial_{ij}\widetilde{B}(\varphi_\delta) = (m(\varphi_\delta)a + \lambda(\varphi_\delta))\partial_{ij}\varphi_\delta + \big((m'(\varphi_\delta)a + \lambda'(\varphi_\delta))\partial_j\varphi_\delta + m(\varphi_\delta)\partial_ja\big)\partial_i\varphi_\delta + \partial_j((\partial_ia)b(\varphi_\delta)).$$  
Since by [\textbf{A4}], $(m(\varphi_\delta)a+\lambda(\varphi))$ is bounded away from zero, we write,
\begin{equation*}
    \partial_{ij}\varphi_\delta =  \frac{\Big(\partial_{ij}\widetilde{B}(\varphi_\delta)- \partial_j\big(b(\varphi_\delta)\partial_ia\big)\Big)}{\big(m(\varphi_\delta)a + \lambda(\varphi_\delta)\big)} - \frac{\big( \partial_i\widetilde{B}(\varphi_\delta)- (\partial_ia)b(\varphi_\delta)\big)\Big((m'(\varphi_\delta)a+\lambda'(\varphi_\delta))\partial_j\varphi_\delta + m(\varphi_\delta)\partial_ja \Big)}{\big(m(\varphi_\delta)a + \lambda(\varphi_\delta)\big)^2}.
\end{equation*}
Applying H\"olders, Gagliardo-Nirenberg, Young's inequalities and the assumption $\textbf{[A4]}$ in the above expression, we get the following estimate for $\lVert\partial_{ij}\varphi_\delta\rVert$.
\begin{align*}
    \lVert\partial_{ij}\varphi_\delta\rVert &\leq \frac{\lVert \widetilde{B}(\varphi_\delta)\rVert_{H^2}}{\alpha_1} + c\lVert\varphi_\delta\rVert_{V} + c\lVert\nabla\widetilde{B}(\varphi_\delta)\rVert_{L^4}\lVert\nabla\varphi_\delta\rVert_{L^4} + c\lVert b(\varphi_\delta)\rVert_{L^4}\lVert\nabla\varphi_\delta\rVert_{L^4} \\
    &\leq \frac{\lVert \widetilde{B}(\varphi_\delta)\rVert_{H^2}}{\alpha_1} + c\lVert\varphi_\delta\rVert_{V} + 2\epsilon\lVert\varphi_\delta\rVert_{H^2} + c\lVert\nabla\varphi_\delta\rVert\lVert\nabla\widetilde{B}(\varphi_\delta)\rVert\lVert\widetilde{B}(\varphi_\delta)\rVert_{H^2}.
\end{align*}
This gives for a choice of $\epsilon <\frac{1}{4},$
\begin{align}\label{eq033}
    \lVert\varphi_\delta\rVert_{H^2} &\leq \Big(\frac{1}{\alpha_1}+ c\lVert\varphi_\delta\rVert_{V}^2\Big)\lVert \widetilde{B}(\varphi_\delta)\rVert_{H^2} + c\lVert\varphi_\delta\rVert_{V}.
\end{align}
Moreover, by applying the uniform estimates \eqref{eq2.26} and \eqref{eq2.27} in \eqref{eq033}, we obtain,
\begin{align}
\lVert\varphi_\delta\rVert_{L^2(0,T;H^2)} \leq C.\label{eq2.29}
\end{align}
\\\textbf{(Estimate for $\lVert\textbf{u}_\delta'\rVert$)}
\\\\Define difference quotients, $D_k\textbf{u}_\delta(t) = \frac{\textbf{u}_\delta(t+k)-\textbf{u}_\delta(t)}{k}$, $D_k\varphi_\delta(t) = \frac{\varphi_\delta(t+k)-\varphi_\delta(t)}{k}$ and $D_k\textbf{h}_\delta(t) = \frac{\textbf{h}_\delta(t+k)-\textbf{h}_\delta(t)}{k}$ for $ k>0 $  . We will follow similar arguments as in \cite[Theorem 4.1]{DAG} used to obtain a higher regularity of $\textbf{u}'$.  We fix $k > 0$ such that $ t + k < T$.  
Consider the following equation obtained by taking the difference of equations satisfied by $u_{\delta}$ \eqref{eq019} at two different times $t+k$ and $t$.
\begin{align*}
    (-\nu\Delta D_k\textbf{u}_\delta, \textbf{v}) + (\eta D_k\textbf{u}_\delta, \textbf{v}) &= -\big(\frac{\nabla a}{2}(\varphi_\delta(t+k)+\varphi_\delta(t))D_k\varphi_\delta, \textbf{v} \big) + \big((\nabla J*\varphi_\delta(t+k))D_k\varphi_\delta, \textbf{v}\big) \\ &\hspace{.5cm}+ \big((\nabla J *D_k\varphi_\delta)\varphi_\delta(t), \textbf{v} \big) + \langle D_k\textbf{h}, \textbf{v} \rangle,
\end{align*}
For all $\textbf{v}\in \mathbb{G}_{div}$, and a.e $ t\in [0,T]$. For $\textbf{v} = D_k\textbf{u}_\delta$, we get,
\begin{align*}
    \nu\lVert\nabla D_k\textbf{u}_\delta\rVert^2 + \eta\lVert D_k\textbf{u}_\delta\rVert^2 &= -\Big(\frac{\nabla a}{2}(\varphi_\delta(t+k)+\varphi_\delta(t))D_k\varphi_\delta , D_k\textbf{u}_\delta\Big) + \Big( (\nabla J*\varphi_\delta(t+k))D_k\varphi_\delta, D_k\textbf{u}_\delta\Big) \\ &\hspace{.5cm} + \Big( (\nabla J *D_k\varphi_\delta)\varphi_\delta(t), D_k\textbf{u}_\delta\Big) + \langle D_k\textbf{h}, D_k\textbf{u}_\delta \rangle.
\end{align*}
Further, using H\"olders, Young's and Poincar\'e inequalities, we have,
\begin{align*}
    \nu\lVert\nabla D_k\textbf{u}_\delta\rVert^2 + \eta\lVert D_k\textbf{u}_\delta\rVert^2 &\leq \lVert\nabla a\rVert_{L^\infty}\lVert\varphi_\delta\rVert_{L^\infty}\lVert D_k\varphi_\delta\rVert \lVert D_k\textbf{u}_\delta\rVert + \lVert\nabla J\rVert_{L^1}\lVert\varphi_\delta\rVert_{L^\infty}\lVert D_k\varphi_\delta\rVert \lVert D_k\textbf{u}_\delta\rVert \\ &\hspace{.5cm} + \lVert\nabla J \rVert_{L^1}\lVert D_k\varphi_\delta\rVert\lVert\varphi_\delta\rVert_{L^\infty}\lVert D_k\textbf{u}_\delta\rVert + \lVert D_k\textbf{h}\rVert_{\mathbb{V}_{div}'} \lVert D_k\textbf{u}_\delta\rVert_{\mathbb{V}_{div}}\\
    &\leq \frac{\eta}{2}\lVert D_k\textbf{u}_\delta\rVert^2 + \frac{\nu}{2}\lVert\nabla D_k\textbf{u}_\delta\rVert^2 + C(\lVert D_k\varphi_\delta\rVert^2 + \lVert D_k\textbf{h}\rVert_{\mathbb{V}_{div}'}^2).
\end{align*}
Now by applying Korn's inequality \cite{SKN} to bound the L.H.S from below, we get,
\begin{align}\label{eq2.30}
    \lVert D_k\textbf{u}_\delta\rVert_{\mathbb{V}_{div}}^2 \leq C( \lVert D_k\varphi_\delta\rVert^2 + \lVert D_k\textbf{h}\rVert_{\mathbb{V}_{div}'}^2).
\end{align}
Differentiate both sides of \eqref{eq018} w.r.t '$t$' we get, $\forall \psi \in H$,
\begin{align*}
   \langle\varphi_\delta'', \psi\rangle + (\textbf{u}_\delta'\nabla\varphi_\delta, \psi) +  (\textbf{u}_\delta\nabla\varphi_\delta', \psi) &= (\Delta \widetilde{B}(\varphi_\delta)', \psi) -\big(m'(\varphi_\delta)\varphi_\delta'\big((\nabla a) \varphi_\delta -\nabla J*\varphi_\delta\big), \nabla \psi \big) \nonumber \\ &\hspace{.5cm} + \big(m(\varphi_\delta)\varphi_\delta'\nabla a, \nabla \psi\big)  - \big(m(\varphi_\delta)\big((\nabla a) \varphi_\delta' -\nabla J*\varphi_\delta'\big), \nabla\psi\big).
\end{align*}
Substitute $\psi = \widetilde{B}(\varphi_\delta)' = \big(m(\varphi_\delta)a + \lambda(\varphi_\delta)\big)\varphi_\delta'$, we get,
\begin{align}\label{eq035}
    (\varphi_\delta'',\widetilde{B}(\varphi_\delta)') &+ (\textbf{u}_\delta'\nabla\varphi_\delta,\widetilde{B}(\varphi_\delta)') +  (\textbf{u}_\delta\nabla\varphi_\delta', \widetilde{B}(\varphi_\delta)') + \lVert \nabla\widetilde{B}(\varphi_\delta)'\rVert^2 = \big(m(\varphi_\delta)\varphi_\delta'\nabla a, \nabla\widetilde{B}(\varphi_\delta)'\big) \nonumber\\ & -\big(m'(\varphi_\delta)\varphi_\delta'\big((\nabla a) \varphi_\delta -\nabla J*\varphi_\delta\big), \nabla\widetilde{B}(\varphi_\delta)'\big)  -\big(m(\varphi_\delta)\big((\nabla a) \varphi_\delta' -\nabla J*\varphi_\delta'\big), \nabla\widetilde{B}(\varphi_\delta)'\big). 
\end{align}
Estimating each of its terms, we get,
\begin{align*}
    (\varphi_\delta'',\widetilde{B}(\varphi_\delta)') &=(\varphi_\delta'',(ma +\lambda)\varphi_\delta') \geq  \frac{\alpha_1}{2}\frac{d}{dt}\lVert\varphi_\delta'\rVert^2\\
    |(\textbf{u}_\delta'\nabla\varphi_\delta,\widetilde{B}(\varphi_\delta)')| &=|(\textbf{u}_\delta'\varphi_\delta,\nabla\widetilde{B}(\varphi_\delta)')| \leq \lVert\textbf{u}_\delta'\rVert\lVert\varphi_\delta\rVert_{L^\infty}\lVert\nabla\widetilde{B}(\varphi_\delta)'\rVert\\
    &\leq \frac{\alpha_1}{10} \lVert\nabla\widetilde{B}(\varphi_\delta)'\rVert^2 + c\lVert\textbf{u}_\delta'\rVert^2,\\
    |(\textbf{u}_\delta\nabla\varphi_\delta', \widetilde{B}(\varphi_\delta)')| &= |(\textbf{u}_\delta\varphi_\delta',\nabla\widetilde{B}(\varphi_\delta)')| \leq \lVert\textbf{u}_\delta\rVert_{L^\infty}\lVert\varphi_\delta'\rVert\lVert\nabla\widetilde{B}(\varphi_\delta)'\rVert\\
    &\leq \frac{\alpha_1}{10} \lVert\nabla\widetilde{B}(\varphi_\delta)'\rVert^2 + c\lVert\varphi_\delta'\rVert^2,
\end{align*}
\begin{align*}
    |\big(m(\varphi_\delta)\varphi_\delta'\nabla a, \nabla\widetilde{B}(\varphi_\delta)'\big)| &\leq c\lVert\varphi_\delta'\rVert\lVert\nabla\widetilde{B}(\varphi_\delta)'\rVert
    \leq \frac{\alpha_1}{10} \lVert\nabla\widetilde{B}(\varphi_\delta)'\rVert^2 + c\lVert\varphi_\delta'\rVert^2,\\
    |\big(m'(\varphi_\delta)\varphi_\delta'\big((\nabla a) \varphi_\delta -\nabla J*\varphi_\delta\big), \nabla\widetilde{B}(\varphi_\delta)'\big)| &\leq c\lVert\varphi_\delta'\rVert\lVert\varphi_\delta\rVert_{L^\infty}\lVert\nabla\widetilde{B}(\varphi_\delta)'\rVert \leq \frac{\alpha_1}{10} \lVert\nabla\widetilde{B}(\varphi_\delta)'\rVert^2 + c\lVert\varphi_\delta'\rVert^2,\\
    |\big(m(\varphi_\delta)\big((\nabla a) \varphi_\delta' -\nabla J*\varphi_\delta'\big), \nabla\widetilde{B}(\varphi_\delta)'\big)| &\leq c\lVert\varphi_\delta'\rVert\lVert\nabla\widetilde{B}(\varphi_\delta)'\rVert \leq \frac{\alpha_1}{10} \lVert\nabla\widetilde{B}(\varphi_\delta)'\rVert^2 + c\lVert\varphi_\delta'\rVert^2.
\end{align*}
Therefore substituting back in \eqref{eq035}, we get,\vspace{-.15cm}
\begin{align}
    \frac{\alpha_1}{2}\frac{d}{dt}\lVert\varphi_\delta'\rVert^2 + \lVert \nabla\widetilde{B}(\varphi_\delta)'\rVert^2 \leq  C(\lVert\varphi_\delta'\rVert^2 + \lVert\textbf{u}_\delta'\rVert^2).
\end{align}
Comparing with the estimate \eqref{eq2.30} and then applying the Gronwall's inequality we obtain,
\begin{align}
\lVert\varphi_\delta'\rVert_{L^\infty(0,T;H)} &\leq C, \label{eq2.34}\\
\lVert\nabla\widetilde{B}(\varphi_\delta)'\rVert_{L^2(0,T;H)} &\leq C.\label{eq2.35}
\end{align}
Since $\nabla\widetilde{B}(\varphi_\delta)'= \big(m(\varphi_\delta)a + \lambda(\varphi_\delta)\big)\nabla\varphi_\delta' + \big(m'(\varphi_\delta)a + \lambda'(\varphi_\delta)\big) \nabla\varphi_\delta\varphi_\delta' + m(\varphi_\delta)\nabla a\varphi_\delta'$,
Using H\"olders, Gagliardo-Nirenberg and Young's inequalities we get,
\begin{align*}
    \lVert\nabla\widetilde{B}(\varphi_\delta)'\rVert^2 &\geq \alpha_1^2\lVert\nabla\varphi_\delta'\rVert^2 -c\big[\lVert\nabla\varphi_\delta'\rVert\lVert\varphi_\delta'\rVert +\lVert\nabla\varphi_\delta'\rVert\lVert\nabla\varphi_\delta\rVert_{L^4}\lVert\varphi_\delta'\rVert_{L^4} +\lVert\nabla\varphi_\delta\rVert_{L^4}^2\lVert\varphi_\delta'\rVert_{L^4}^2 \\&\hspace{.5cm}+\lVert\nabla\varphi_\delta\rVert\lVert\varphi_\delta'\rVert_{L^4}^2 +\lVert\varphi'\rVert^2 \big]\\
    &\geq \alpha_1^2\lVert\nabla\varphi_\delta'\rVert^2 -c\big[\lVert\nabla\varphi_\delta'\rVert\lVert\varphi_\delta'\rVert +\lVert\nabla\varphi_\delta'\rVert\lVert\nabla\varphi_\delta\rVert^\frac{1}{2}\lVert\varphi_\delta\rVert_{H^2}^\frac{1}{2}\lVert\varphi_\delta'\rVert^\frac{1}{2}\lVert\nabla\varphi_\delta'\rVert^\frac{1}{2} \\&\hspace{.5cm} +\lVert\nabla\varphi_\delta\rVert\lVert\varphi_\delta\rVert_{H^2}\lVert\varphi_\delta'\rVert\lVert\nabla\varphi_\delta'\rVert +\lVert\nabla\varphi_\delta\rVert\lVert\varphi_\delta'\rVert\lVert\nabla\varphi_\delta'\rVert +\lVert\varphi'\rVert^2 \big]\\
    &\geq  \alpha_1^2\lVert\nabla\varphi_\delta'\rVert^2 - \frac{\alpha_1^2}{4}\lVert\nabla\varphi_\delta'\rVert^2 -c\big[ \lVert\varphi_\delta'\rVert^2 + \lVert\nabla\varphi_\delta\rVert\lVert\varphi_\delta\rVert_{H^2}\lVert\varphi_\delta'\rVert\lVert\nabla\varphi_\delta'\rVert +\lVert\nabla\varphi_\delta\rVert\lVert\varphi_\delta'\rVert\lVert\nabla\varphi_\delta'\rVert\big]\\
    &\geq \frac{3\alpha_1^2}{4}\lVert\nabla\varphi_\delta'\rVert^2 - \frac{\alpha_1^2}{4}\lVert\nabla\varphi_\delta'\rVert^2 - c[\lVert\varphi_\delta'\rVert^2 + \lVert\nabla\varphi_\delta\rVert^2\lVert\varphi_\delta\rVert_{H^2}^2\lVert\varphi_\delta'\rVert^2 + \lVert\nabla\varphi_\delta\rVert^2\lVert\varphi_\delta'\rVert^2]\\
    &\geq \frac{\alpha_1^2}{2}\lVert\nabla\varphi_\delta'\rVert^2 - C_1\lVert\varphi_\delta'\rVert^2\big( 1 + \lVert\nabla\varphi_\delta\rVert^2 + \lVert\nabla\varphi_\delta\rVert^2\lVert\varphi_\delta\rVert_{H^2}^2\big).
\end{align*}
Therefore,
\begin{equation}\label{eq036}
    \frac{\alpha_1^2}{2}\lVert\nabla\varphi_\delta'\rVert^2 \leq \lVert\nabla\widetilde{B}(\varphi_\delta)'\rVert^2 + C\lVert\varphi_\delta'\rVert^2\big( 1 + \lVert\nabla\varphi_\delta\rVert^2 + \lVert\nabla\varphi_\delta\rVert^2\lVert\varphi_\delta\rVert_{H^2}^2\big).
\end{equation}
Applying uniform estimates \eqref{eq2.35}, \eqref{eq2.34}, \eqref{eq2.29} and \eqref{eq2.26} in \eqref{eq036}, we obtain,\vspace{-.15cm}
\begin{align}\label{eq2.37}
\lVert\varphi_\delta'\rVert_{L^2(0,T;V)} \leq C.
\end{align}
Similarly, applying \eqref{eq2.34} and \eqref{eq2.26} in \eqref{eq029} we get,\vspace{-.15cm}
\begin{align} 
\lVert\widetilde{B}(\varphi_\delta)\rVert_{L^\infty(0,T;H^2)} &\leq C,\label{eq2.38}
\end{align}
Moreover, using uniform estimates \eqref{eq2.26} and \eqref{eq2.38} in \eqref{eq033}, we have,\vspace{-.15cm}
\begin{align}
\lVert\varphi_\delta\rVert_{L^\infty(0,T;H^2)} &\leq C.\label{eq2.39}
\end{align}
Therefore from the estimates \eqref{eq93}, \eqref{eq2.29}, \eqref{eq2.34}, \eqref{eq2.37} and \eqref{eq2.39}  that are uniform in $\delta$, we have, $(\varphi, \textbf{u})$ of regularity \eqref{eq2.08}-\eqref{eq2.010} such that up to a subsequence,
 
 \begin{align}
\varphi_{\delta} &\overset{\ast}{\rightharpoonup} \varphi &&\text{ in }L^{\infty}(0,T;H^2)\label{eq98}\\
\varphi_{\delta} &\rightharpoonup \varphi &&\text{ in }  L^2(0,T;H^2(\Omega))\label{eq099}\\
\varphi_{\delta}' &\overset{\ast}{\rightharpoonup} \varphi' &&\text{ in }L^{\infty}(0,T;H)\label{eq99}\\
\varphi_{\delta}' &\rightharpoonup \varphi' &&\text{ in } L^2(0,T;V)\label{eq100}\\
\textbf{u}_{\delta} &\rightharpoonup \textbf{u} &&\text{ in } L^2(0,T;\textbf{H}^2(\Omega))\label{eq101} 
\end{align}

Using Aubin-Lions lemma for the embedding, $H^2 \hookrightarrow H^2 \hookrightarrow V$ and regularity of $(\varphi, \textbf{u})$ as in \eqref{eq98} and \eqref{eq100}, we have, $\varphi_{\delta} \rightarrow \varphi$ strongly in $C([0,T];H^2)$.
 The above proved convergence results enables us to pass to the limit $\delta \rightarrow 0$ in the approximated problem \eqref{eq23a}- \eqref{eq29a} by using similar arguments as in \cite{CEG} and hence existence of a strong solution, $(\varphi, \textbf{u})$ to the system \eqref{eq1}-\eqref{eq07}.
\end{proof}
The uniqueness of solution follows from a weak-strong uniqueness argument. 
\begin{theorem}[\textbf{Weak-strong uniqueness}]
Let $\varphi_0 \in H^2(\Omega) \cap L^\infty(\Omega),$ $\textbf{h}\in \mathcal{U}$ and $J \in W^{2,1}(\Omega)$. $F$ be either $F_{do}$ or $F_{log}$. Assume $[\textbf{J}], [\textbf{A1}]-[\textbf{A4}]$ are satisfied and in addition assume that $m \in C^1[-1,1]$ and $\lambda \in C^1[-1,1]$. Let $(\varphi_1,\textbf{u}_1)$ be a weak solution and $(\varphi_2,\textbf{u}_2)$ be a strong solution to the nonlocal CHB system that corresponds to initial data, $\varphi_{10}, \varphi_{20} \in H^2(\Omega)\cap L^\infty(\Omega)$ and external forces, $\textbf{h}_1, \textbf{h}_2\in \mathcal{U}.$ Then the following estimate holds.
\begin{align}\label{eq10}
    \lVert\varphi_1-\varphi_2\rVert_{L^\infty(0,T;H)\cap L^2(0,T;V)}^2 &+ c_1\lVert \textbf{u}_1-\textbf{u}_2\rVert_{L^2(0,T;\mathbb{V}_{div})}^2 + \lVert \varphi_1'-\varphi_2'\rVert_{L^2(0,T;V')}^2 \nonumber\\&\leq C\big(\gamma(t)|\varphi_{10}-\varphi_{20}|+\lVert \textbf{h}_1-\textbf{h}_2\rVert_{L^2(0,T;\mathbb{V}_{div}')}^2\big).
\end{align}
\end{theorem}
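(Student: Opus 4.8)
The plan is to run an energy argument on the system satisfied by the differences of the two solutions, exploiting the higher regularity of the strong solution $(\varphi_2,\textbf{u}_2)$ together with the reformulation \eqref{eq1.13}--\eqref{eq1.15}, which replaces the singular derivative $F'$ by the bounded quantity $\lambda=mF''$. Write $\phi:=\varphi_1-\varphi_2$, $\textbf{w}:=\textbf{u}_1-\textbf{u}_2$, $\textbf{k}:=\textbf{h}_1-\textbf{h}_2$, so that $\phi(0)=\varphi_{10}-\varphi_{20}$. Since $\varphi_1'\in L^2(0,T;V')$ and $\varphi_2'\in L^2(0,T;V)$, we have $\phi'\in L^2(0,T;V')$; hence $\phi\in L^2(0,T;V)$ is an admissible test function and $\langle\phi',\phi\rangle=\tfrac12\frac{d}{dt}\|\phi\|^2$ is legitimate. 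I would subtract the two phase equations written in the form \eqref{eq018} and the two velocity equations written in the form \eqref{eq019}, the latter already having the singular chemical potential eliminated from its right-hand side.

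First I would bound $\textbf{w}$. Subtracting the two copies of \eqref{eq019} and testing with $\textbf{v}=\textbf{w}$ produces $\nu\|\nabla\textbf{w}\|^2+(\eta\textbf{w},\textbf{w})$ on the left (using $\textbf{w}=0$ on $\partial\Omega$, the constant viscosity of this section, and $\eta\ge0$), while every term on the right is a product of the smooth kernels $a,J$ with $\varphi_i$ carrying at most one gradient. The terms involving $\nabla\varphi_1$ (for the weak solution $\nabla\varphi_1$ is merely $L^2$ in space) are treated by integrating by parts against the divergence-free field $\textbf{w}$: for instance $((J*\varphi_1)\nabla\varphi_1,\textbf{w})=-(\varphi_1(\nabla J*\varphi_1),\textbf{w})$. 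After the algebraic splittings $\varphi_1^2-\varphi_2^2=(\varphi_1+\varphi_2)\phi$ and $\nabla J*\varphi_1-\nabla J*\varphi_2=\nabla J*\phi$, every contribution is bounded by $C\|\phi\|\,\|\textbf{w}\|$ using $|\varphi_i|\le1$, $\nabla a\in L^\infty$ and $\|\nabla J*\phi\|\le\|\nabla J\|_{L^1}\|\phi\|$. Korn/Poincar\'e and Young's inequality then yield the pointwise-in-time bound
\begin{equation*}
\|\textbf{w}\|_{\mathbb{V}_{div}}^2\le C\big(\|\phi\|^2+\|\textbf{k}\|_{\mathbb{V}_{div}'}^2\big),
\end{equation*}
which is crucially free of $\nabla\phi$.

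Next I would bound $\phi$. Testing the difference of \eqref{eq018} with $\psi=\phi$, the transport term $(\textbf{u}_1\cdot\nabla\phi,\phi)$ vanishes by incompressibility, leaving the remainder $(\textbf{w}\cdot\nabla\varphi_2,\phi)$, controlled by $\|\textbf{w}\|_{L^4}\|\nabla\varphi_2\|_{L^4}\|\phi\|\le C\|\textbf{w}\|_{\mathbb{V}_{div}}\|\phi\|$ since $\varphi_2\in L^\infty(0,T;H^2)\hookrightarrow L^\infty(0,T;W^{1,4})$ in two dimensions; the velocity bound turns this into $C\|\phi\|^2+C\|\textbf{k}\|_{\mathbb{V}_{div}'}^2$. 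The heart of the estimate is the diffusion term $(\nabla(\widetilde B(\varphi_1)-\widetilde B(\varphi_2)),\nabla\phi)$. Using $\nabla\widetilde B(\varphi_i)=\big(\lambda(\varphi_i)+a\,m(\varphi_i)\big)\nabla\varphi_i+(\nabla a)b(\varphi_i)$ and $\nabla\varphi_1=\nabla\phi+\nabla\varphi_2$, the leading piece is $\big((\lambda(\varphi_1)+a\,m(\varphi_1))\nabla\phi,\nabla\phi\big)$. By \textbf{[A4]}, $\lambda(\varphi_1)+a\,m(\varphi_1)=m(\varphi_1)(F''(\varphi_1)+a)\ge\alpha_1$, and since $m,\lambda\in C^1[-1,1]$ this coefficient extends continuously to $[-1,1]$ and is bounded, so the piece is coercive, $\ge\alpha_1\|\nabla\phi\|^2$. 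All remaining pieces are Lipschitz increments, $|m(\varphi_1)-m(\varphi_2)|+|\lambda(\varphi_1)-\lambda(\varphi_2)|+|b(\varphi_1)-b(\varphi_2)|\le C|\phi|$, multiplied by bounded data $\nabla\varphi_2,\nabla a,\nabla J*\phi$; via the two-dimensional Ladyzhenskaya inequality $\|\phi\|_{L^4}^2\le C\|\phi\|\,\|\phi\|_V$ and Young's inequality each is bounded by a small multiple of $\|\nabla\phi\|^2$ plus $C\zeta(t)\|\phi\|^2$, where $\zeta\in L^\infty(0,T)$ is built from $\|\varphi_2\|_{H^2}$. Choosing the small parameters so the total coefficient of $\|\nabla\phi\|^2$ does not exceed $\tfrac{\alpha_1}{2}$ and absorbing, one arrives at
\begin{equation*}
\tfrac12\frac{d}{dt}\|\phi\|^2+\tfrac{\alpha_1}{2}\|\nabla\phi\|^2\le C\zeta(t)\|\phi\|^2+C\|\textbf{k}\|_{\mathbb{V}_{div}'}^2 .
\end{equation*}
Gronwall's inequality gives the $L^\infty(0,T;H)\cap L^2(0,T;V)$ bound for $\phi$ with $\gamma(t)=\exp\!\big(C\int_0^t\zeta\big)$; reinserting it into the velocity bound yields the $L^2(0,T;\mathbb{V}_{div})$ estimate for $\textbf{w}$, and reading $\langle\phi',\psi\rangle$ off the difference equation together with $\|\nabla(\widetilde B(\varphi_1)-\widetilde B(\varphi_2))\|\le C\|\phi\|_V$ gives $\|\phi'\|_{V'}\le C(\|\textbf{w}\|_{\mathbb{V}_{div}}+\|\phi\|_V)$, hence the $L^2(0,T;V')$ bound. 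This is precisely \eqref{eq10}; uniqueness of the strong solution follows by setting $\varphi_{10}=\varphi_{20}$ and $\textbf{h}_1=\textbf{h}_2$.

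I expect the main obstacle to be the diffusion term, because the weak solution carries only $\nabla\varphi_1\in L^2$ in space while $F''(\varphi_1)$ is unbounded as $\varphi_1\to\pm1$. The reformulation is exactly what saves the argument: it confines the singularity inside the coefficient $\lambda(\varphi_1)+a\,m(\varphi_1)$, which \textbf{[A4]} keeps bounded below and the $C^1$-regularity of $m,\lambda$ keeps bounded above, so this coefficient is simultaneously coercive and Lipschitz. The accompanying technical point is to ensure that wherever $\nabla\varphi_1$ would otherwise appear uncontrolled, in the convection and in the velocity coupling, it is either annihilated by incompressibility or transferred onto a smooth kernel by integration by parts, leaving the only surviving gradient of the weak solution inside the coercive quadratic form.
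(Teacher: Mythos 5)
Your proposal is correct and takes essentially the same route as the paper: the paper proves this theorem only by invoking the techniques of \cite[Theorem 6.1]{CSM}, which is exactly your energy argument on the difference system --- the reformulation through $\widetilde{B}$ and $\lambda = mF''$, coercivity of $m(\varphi_1)a+\lambda(\varphi_1)$ from [\textbf{A4}], Lipschitz increments of $m$, $\lambda$, $b$ on $[-1,1]$ combined with $|\varphi_i|\le 1$, integration by parts against the divergence-free field in the velocity equation, and Ladyzhenskaya plus Gronwall --- and the same skeleton reappears in the paper's own proof of the higher-order stability estimate \eqref{eq3.1}. The only cosmetic mismatch is that your Gronwall step naturally yields $\lVert\varphi_{10}-\varphi_{20}\rVert^2$ where the stated bound \eqref{eq10} carries the first power $|\varphi_{10}-\varphi_{20}|$, which appears to be a typo in the paper's statement rather than a defect in your argument.
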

The proof follows using similar techniques as in \cite[Theorem 6.1]{CSM}. In fact we can get a continuous dependence estimate in a weaker space using the regularity of the strong solution. Hence the uniqueness of the solution follows.

\subsection{Continuous Dependence Estimates of Higher Regularity}

In this section, we prove a higher regular continuous dependence estimate, \eqref{eq3.1} holds under the higher regularity assumptions on the mobility and potential. The higher-order stability estimate has a crucial role in deriving the necessary optimality conditions. 

\begin{theorem}[Stability estimate]Let $\varphi_0 \in H^2(\Omega) \cap L^\infty(\Omega),$ $\textbf{h}\in \mathcal{U}$ and $J \in W^{2,1}(\Omega)$. Assume $[\textbf{J}], [\textbf{A1}]-[\textbf{A4}]$ holds. In addition, assume that the mobility, $m \in C^2[-1,1]$ and $\lambda \in C^2[-1,1]$. Let $(\varphi_1,\textbf{u}_1)$, $(\varphi_2,\textbf{u}_2)$ be two strong solutions to the system \eqref{eq1}-\eqref{eq07} corresponding to initial states, $\varphi_{10},\varphi_{20} \in H^2(\Omega)\cap L^\infty(\Omega)$ and external forces, $\textbf{h}_1, \textbf{h}_2\in \mathcal{U}$ respectively. Then the following estimate holds.
\begin{align}\label{eq3.1}
   \lVert\varphi_1-\varphi_2\rVert_{L^2(0,T:H^2)}^2+\frac{c_0}{2}\lVert\varphi_1-\varphi_2\rVert_{L^\infty(0,T:V)}^2 &+ \frac{\alpha_0}{2}\lVert\varphi_1'-\varphi_2'\rVert_{L^2(0,T;H)}^2 +  \frac{c_1}{2}\lVert \textbf{u}_1-\textbf{u}_2\rVert_{L^2(0,T;\mathbb{V}_{div})}^2 \nonumber\\&\leq  C\big(1 + \gamma(t)\big) \big(|\varphi_{10}-\varphi_{20}|^2 + \lVert \textbf{h}_1-\textbf{h}_2\rVert_{L^2(0,T;\mathbb{V}'_{div})}^2\big). 
\end{align} 
\end{theorem}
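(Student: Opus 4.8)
The plan is to set $\bar\varphi \coloneqq \varphi_1-\varphi_2$, $\bar{\textbf{u}} \coloneqq \textbf{u}_1-\textbf{u}_2$, $\bar{\textbf{h}} \coloneqq \textbf{h}_1-\textbf{h}_2$ and to bootstrap the low-order continuous dependence estimate \eqref{eq10} (which, since a strong solution is in particular a weak one, already controls $\bar\varphi$ in $L^\infty(0,T;H)\cap L^2(0,T;V)$ and $\bar{\textbf{u}}$ in $L^2(0,T;\mathbb{V}_{div})$ by the data) up to the regularity claimed in \eqref{eq3.1}. Throughout I would work with the reformulation \eqref{eq1.13}, writing the difference of the two $\varphi$-equations in the strong ($L^2$-valued, a.e.\ in time) form $\bar\varphi'+\bar{\textbf{u}}\cdot\nabla\varphi_1+\textbf{u}_2\cdot\nabla\bar\varphi=\Delta w+\nabla\cdot R_1-\nabla\cdot R_2$, where $w\coloneqq\widetilde{B}(\varphi_1)-\widetilde{B}(\varphi_2)$, $R_1\coloneqq m(\varphi_1)((\nabla a)\varphi_1-\nabla J*\varphi_1)-m(\varphi_2)((\nabla a)\varphi_2-\nabla J*\varphi_2)$ and $R_2\coloneqq(b(\varphi_1)-b(\varphi_2))\nabla a$. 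The whole argument rests on three facts: the strict positivity $\widetilde{B}'(s)=m(s)(F''(s)+a)\ge\alpha_1$ from \textbf{[A4]}; the uniform bounds $|\varphi_i|\le 1$, $\textbf{u}_i\in L^\infty(Q)\cap L^2(0,T;\textbf{H}^2(\Omega))$, $\varphi_i\in L^\infty(0,T;H^2)$, $\varphi_i'\in L^\infty(0,T;H)\cap L^2(0,T;V)$ furnished by the strong-solution theorem; and the additional regularity $m,\lambda\in C^2[-1,1]$, which makes $m',\lambda'$ Lipschitz and lets one differentiate the coefficients of $\widetilde{B}$ one further time.

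First I would estimate $\bar{\textbf{u}}$. Subtracting the two Brinkman equations \eqref{eq019} and testing with $\bar{\textbf{u}}$, the left-hand side is coercive on $\mathbb{V}_{div}$ by $\nu\ge\nu_0$, $\eta\ge 0$ and Korn's and Poincar\'e's inequalities. On the right I would use $\varphi_1^2-\varphi_2^2=(\varphi_1+\varphi_2)\bar\varphi$ and $(J*\varphi_1)\nabla\varphi_1-(J*\varphi_2)\nabla\varphi_2=(J*\bar\varphi)\nabla\varphi_1+(J*\varphi_2)\nabla\bar\varphi$, integrating the last term by parts and using $\nabla\cdot\bar{\textbf{u}}=0$ to move the gradient off $\bar\varphi$. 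With $|\varphi_i|\le 1$ and the uniform bound on $\nabla\varphi_1$ this gives the pointwise-in-time estimate $\lVert\bar{\textbf{u}}\rVert_{\mathbb{V}_{div}}^2\le C(\lVert\bar\varphi\rVert^2+\lVert\bar{\textbf{h}}\rVert_{\mathbb{V}_{div}'}^2)$, call it $(\star)$, so that $\bar{\textbf{u}}$ is controlled by $\bar\varphi$ in $H$ alone; integrating $(\star)$ and invoking \eqref{eq10} already produces the $\tfrac{c_1}{2}\lVert\bar{\textbf{u}}\rVert_{L^2(0,T;\mathbb{V}_{div})}^2$ term of \eqref{eq3.1}.

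The core is the estimate for $\bar\varphi$ in $L^\infty(0,T;V)$ and for $\bar\varphi'$ in $L^2(0,T;H)$, which I would obtain by testing the difference equation with $\psi=w'=\partial_t\big(\widetilde{B}(\varphi_1)-\widetilde{B}(\varphi_2)\big)\in L^2(0,T;V)$. The diffusion term yields $(\Delta w,w')=-\tfrac12\tfrac{d}{dt}\lVert\nabla w\rVert^2$ plus a boundary contribution $\int_{\partial\Omega}(\nabla w\cdot\textbf{n})\,w'$, controlled because the difference of the flux conditions $(m\nabla\mu_i)\cdot\textbf{n}=0$ expresses $\nabla w\cdot\textbf{n}$ on $\partial\Omega$ in terms of $\bar\varphi$. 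Writing $w'=(m(\varphi_1)a+\lambda(\varphi_1))\bar\varphi'+\big((m(\varphi_1)-m(\varphi_2))a+\lambda(\varphi_1)-\lambda(\varphi_2)\big)\varphi_2'$, the pairing $(\bar\varphi',w')$ is bounded below by $\alpha_1\lVert\bar\varphi'\rVert^2$ minus a remainder that, using the Lipschitz bounds on $m,\lambda$, $\varphi_2'\in L^2(0,T;L^4)$ and $V\hookrightarrow L^4$, is absorbed as $\tfrac{\alpha_1}{4}\lVert\bar\varphi'\rVert^2+\beta(t)\lVert\bar\varphi\rVert_V^2$ with $\beta\in L^1(0,T)$. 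The convective and nonlocal terms are kept in the strong form $(\bar{\textbf{u}}\cdot\nabla\varphi_1+\textbf{u}_2\cdot\nabla\bar\varphi,w')$ and $(\nabla\cdot R_1-\nabla\cdot R_2,w')$ — crucially \emph{not} integrated by parts onto $\nabla w'$, which is uncontrolled — and estimated by H\"older, Gagliardo--Nirenberg and Young against the coercive $\lVert\bar\varphi'\rVert^2$, using $\lVert\nabla\cdot R_i\rVert\le C\lVert\bar\varphi\rVert_V$ (here the Lipschitz continuity of $m'$, i.e.\ $m\in C^2$, and $J\in W^{2,1}$ are needed) and $(\star)$ for $\bar{\textbf{u}}$. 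Since $\nabla w=(m(\varphi_1)a+\lambda(\varphi_1))\nabla\bar\varphi+\rho$, where $\rho$ involves $\bar\varphi$ only (not $\nabla\bar\varphi$) multiplied by the bounded gradients of $\varphi_2$, a Gagliardo--Nirenberg/Young argument shows $\lVert\nabla w\rVert^2$ is equivalent to $\lVert\nabla\bar\varphi\rVert^2$ up to a multiple of $\lVert\bar\varphi\rVert^2$, and Gronwall's inequality with an $L^1(0,T)$ coefficient then delivers the $\tfrac{c_0}{2}\lVert\bar\varphi\rVert_{L^\infty(0,T;V)}^2$ and $\tfrac{\alpha_0}{2}\lVert\bar\varphi'\rVert_{L^2(0,T;H)}^2$ bounds.

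Finally I would recover $\bar\varphi\in L^2(0,T;H^2)$. Differentiating the identity $\partial_i w=(m(\varphi_1)a+\lambda(\varphi_1))\partial_i\bar\varphi+\rho_i$ once more and solving for $\partial_{ij}\bar\varphi$ — legitimate since $m(\varphi_1)a+\lambda(\varphi_1)\ge\alpha_1$ — gives, exactly as in the representation of $\partial_{ij}\varphi_\delta$ in the strong-solution proof, an estimate $\lVert\bar\varphi\rVert_{H^2}\le C(1+\lVert\varphi_i\rVert_{H^2}^2)\big(\lVert w\rVert_{H^2}+\lVert\bar\varphi\rVert_V\big)$, where the $C^2$-regularity of $m,\lambda$ is what controls the new second-order coefficient terms. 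Applying the elliptic estimate \eqref{eq028} to $w$, together with $\lVert\Delta w\rVert\le\lVert\bar\varphi'\rVert+C\lVert\bar{\textbf{u}}\rVert_{\mathbb{V}_{div}}+C\lVert\bar\varphi\rVert_V$ read off from the difference equation and the flux boundary condition for the $H^{1/2}(\partial\Omega)$ term, yields $\lVert\bar\varphi\rVert_{L^2(0,T;H^2)}^2\le C\big(\lVert\bar\varphi'\rVert_{L^2(0,T;H)}^2+\lVert\bar{\textbf{u}}\rVert_{L^2(0,T;\mathbb{V}_{div})}^2+\lVert\bar\varphi\rVert_{L^2(0,T;V)}^2\big)$, every term on the right already being bounded by the data. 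Collecting the three estimates and applying Gronwall's inequality once more produces \eqref{eq3.1}. The main obstacle is the circularity in this last step: the $H^2$-norm of $\bar\varphi$ re-enters the Gagliardo--Nirenberg bounds of the nonlinear cross-terms (for instance through $\lVert\nabla\bar\varphi\rVert_{L^4}^2\le C\lVert\nabla\bar\varphi\rVert\,\lVert\bar\varphi\rVert_{H^2}$), so one must track the small constants carefully and absorb these contributions into the coercive $\alpha_1$-term supplied by \textbf{[A4]}; it is precisely the strict positivity of $\widetilde{B}'$ together with the $C^2$-regularity of $(m,\lambda)$ that makes this absorption, and hence the whole bootstrap, possible.
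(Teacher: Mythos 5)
Your overall architecture coincides with the paper's: test the difference of the $\varphi$-equations with $w'=(\widetilde{B}(\varphi_1)-\widetilde{B}(\varphi_2))'$, use the $\alpha_1$-coercivity from \textbf{[A4]} to extract $\lVert\bar\varphi'\rVert^2$, close the $H$/$V$-level bound with the velocity estimate and Gronwall together with \eqref{eq10}, and then recover $L^2(0,T;H^2)$ from the elliptic estimate \eqref{eq028} applied to $w$ plus the pointwise inversion of $\partial_{ij}w$ using the lower bound on $m(\cdot)a+\lambda(\cdot)$ and the $C^2$-regularity of $m,\lambda$ — your last two steps reproduce \eqref{eq2.6}--\eqref{eq2.7} essentially verbatim, and your velocity estimate $(\star)$ is even marginally sharper than the paper's \eqref{eq2.11} (your divergence-free integration by parts removes the $\lVert\nabla\bar\varphi\rVert^2$ term; either version suffices for the argument).

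There is, however, a genuine gap at exactly the step you flag as unproblematic. Integrating by parts only the $\Delta w$ term in the strong form produces the boundary integral $\int_{\partial\Omega}(\nabla w\cdot\textbf{n})\,w'$, and the flux conditions express only the \emph{first} factor, $\nabla w\cdot\textbf{n}=(R_2-R_1)\cdot\textbf{n}$, through $\bar\varphi$. The second factor is the trace of $w'$, which requires control of $\lVert w'\rVert_{H^{s}(\Omega)}$ for some $s>1/2$, hence of fractional derivatives of $\bar\varphi'$; but the only dissipation your identity supplies is $\alpha_1\lVert\bar\varphi'\rVert^{2}$ in $H$, which controls no trace of $\bar\varphi'$. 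Bounding $\lVert\nabla\bar\varphi'\rVert$ crudely by $g(t):=\lVert\varphi_1'\rVert_{V}+\lVert\varphi_2'\rVert_{V}\in L^2(0,T)$ destroys the difference structure: after interpolation the boundary term is only $O\big(g(t)\lVert\bar\varphi\rVert_{V}\big)$ or $O\big(g(t)^{2/3}\lVert\bar\varphi\rVert_{V}^{4/3}\big)$, neither of which has the Gronwall form $\beta(t)\lVert\bar\varphi\rVert_{V}^{2}$, and the resulting differential inequality of type $y'\le\beta y+\gamma y^{2/3}$ (or $y'\le\beta y+\gamma\sqrt{y}$) with $\gamma$ a fixed $L^1$ function \emph{independent of the data difference} does not force $y\equiv 0$ for coinciding data — so \eqref{eq3.1} cannot be concluded along this route. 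A time-derivative trick on the boundary, $\int_{\partial\Omega}Gw'=\frac{d}{dt}\int_{\partial\Omega}Gw-\int_{\partial\Omega}G'w$, fails for the same reason, since $G'$ contains the trace of $\bar\varphi'$ through the local term $(\nabla a)\bar\varphi'$ (only the convolution piece $\nabla J*\bar\varphi'$ is smoothed by $J\in W^{2,1}$). The paper avoids the boundary term altogether: it tests the \emph{weak} formulation \eqref{eq2.2}, in which the total no-flux condition has already eliminated all boundary integrals, accepts the pairings of the nonlocal terms against $\nabla(\widetilde{B}(\varphi_1)-\widetilde{B}(\varphi_2))'$, and defuses each by writing it as a total time derivative minus interior remainders involving $\varphi_i'$ and $\bar\varphi'$ only; the time derivatives are collected into the functional $\Psi$, which \eqref{eq3.06} shows is comparable to $\lVert\nabla\bar\varphi\rVert^2$ up to $\lVert\bar\varphi\rVert^2$, so Gronwall closes. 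Replacing your treatment of the diffusion/nonlocal block by this device repairs the proof; the remainder of your argument then goes through as written.
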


\begin{proof}
 Let $(\varphi_1,\textbf{u}_1)$, $(\varphi_2,\textbf{u}_2)$ be two strong solutions to the system \eqref{eq1}-\eqref{eq07} defined as in theorem 3.1 corresponding to the initial data, $\varphi_{10}, \varphi_{20}\in H^2(\Omega)\cap L^\infty(\Omega)$ and external forces, $\textbf{h}_1, \textbf{h}_2\in \mathcal{U}$ respectively.
Let $\varphi:= \varphi_1-\varphi_2$, $\textbf{u}:= \textbf{u}_1-\textbf{u}_2$, $\varphi_0 = \varphi_{10}-\varphi_{20}$ and $\textbf{h}:= \textbf{h}_1-\textbf{h}_2$. Then $(\varphi, \textbf{u})$ satisfies the following system. 
\begin{align}
    \varphi' + \textbf{u}_1\nabla\varphi + \textbf{u}\nabla\varphi_2 &= \nabla\cdot\big((m(\varphi_1)a+ \lambda(\varphi_1))\nabla\varphi\big) + \nabla\cdot\big((m(\varphi_1)-m(\varphi_2))a\nabla\varphi_2\big) + \nabla\cdot\big((\lambda(\varphi_1)- \lambda(\varphi_2))\nabla\varphi_2\big) \nonumber\\&\hspace{.5cm} +
    \nabla\cdot\big(m(\varphi_1)(\varphi\nabla a - \nabla J*\varphi)\big) +
    \nabla\cdot\big((m(\varphi_1)-m(\varphi_2))(\varphi_2\nabla a - \nabla J*\varphi_2)\big), \label{eq3.052}\\
    -\nu\Delta\textbf{u} + \eta\textbf{u} + \nabla\widetilde{\pi} &= (a\varphi_1 - J*\varphi_1)\nabla\varphi + (a\varphi - J*\varphi)\nabla\varphi_2 + \nabla(F(\varphi_1)-F(\varphi_2)) + \textbf{h}.\label{eq3.053}
\end{align}
Recall the notations \eqref{eq1.13a}-\eqref{eq1.15}. Using this notation we have the following weak formulation of the system \eqref{eq3.052}-\eqref{eq3.053}.  $\forall \psi \in V$ and $\textbf{v} \in \mathbb{G}_{div}$,
\begin{align}
&\langle\varphi', \psi\rangle + (\nabla(\widetilde{B}(\varphi_1) -\widetilde{B}(\varphi_2)),\nabla\psi)+ ((\nabla a)(b(\varphi_2)-b(\varphi_1)) , \nabla\psi)+ \big(m(\varphi_1)((\nabla a)\varphi-\nabla J*\varphi),\nabla\psi\big)  \nonumber\\ &+\big((m(\varphi_1)-m(\varphi_2))((\nabla a)\varphi_2-\nabla J*\varphi_2), \nabla\psi\big)+ (\textbf{u}_1\nabla\varphi, \psi)+ (\textbf{u}\nabla\varphi_2, \psi) \hspace{.1cm}= 0,\label{eq2.2}\\
&(\nu\nabla\textbf{u}, \nabla \textbf{v}) + (\eta \textbf{u},\textbf{v}) = -\big(\frac{\nabla a}{2}\varphi(\varphi_1+\varphi_2),\textbf{v}\big)-((J*\varphi_1)\nabla\varphi, \textbf{v})+ ((J*\varphi)\nabla\varphi_2, \textbf{v})+\langle\textbf{h},\textbf{v}\rangle.\label{eq2.3}
\end{align}
 Substitute for $\psi$ in \eqref{eq2.2}, $\psi = (\widetilde{B}(\varphi_1)-\widetilde{B}(\varphi_2))'= (m(\varphi_1)a + \lambda(\varphi_1)) \varphi'+ (m(\varphi_1)-m(\varphi_2))a\varphi_2'+ (\lambda(\varphi_1)-\lambda(\varphi_2))\varphi_2'$.
\begin{align}
&(\varphi', (m(\varphi_1)a + \lambda(\varphi_1))\varphi')+ (\varphi', (m(\varphi_1)-m(\varphi_2))a\varphi_2') + (\varphi', (\lambda(\varphi_1)-\lambda(\varphi_2))\varphi_2') + \frac{1}{2}\frac{d}{dt}\lVert\nabla(\widetilde{B}(\varphi_1)-\widetilde{B}(\varphi_2))\rVert^2  \nonumber\\ &+ \frac{d}{dt}\big(m(\varphi_1)((\nabla a)\varphi-\nabla J*\varphi),\nabla (\widetilde{B}(\varphi_1)-\widetilde{B}(\varphi_2))\big)  - \big(m'(\varphi_1)\varphi_1'((\nabla a)\varphi-\nabla J*\varphi),\nabla (\widetilde{B}(\varphi_1)-\widetilde{B}(\varphi_2))\big) \nonumber\\ & - \big(m(\varphi_1)(\varphi'\nabla a-\nabla J*\varphi'),\nabla (\widetilde{B}(\varphi_1)-\widetilde{B}(\varphi_2))\big)
+\frac{d}{dt}\big((m(\varphi_1)-m(\varphi_2))(\varphi_2\nabla a-\nabla J*\varphi_2), \nabla(\widetilde{B}(\varphi_1)-\widetilde{B}(\varphi_2))\big) \nonumber\\
& -\big((m(\varphi_1)-m(\varphi_2))(\varphi_2'\nabla a-\nabla J*\varphi_2'), \nabla(\widetilde{B}(\varphi_1)-\widetilde{B}(\varphi_2))\big) -\big(m'(\varphi_1)\varphi'((\nabla a)\varphi_2-\nabla J*\varphi_2), \nabla(\widetilde{B}(\varphi_1)-\widetilde{B}(\varphi_2))\big) \nonumber
\end{align}
\begin{align}\label{eq3.53}
& -\big((m'(\varphi_1)-m'(\varphi_2))\varphi_2'(\varphi_2\nabla a-\nabla J*\varphi_2), \nabla(\widetilde{B}(\varphi_1)-\widetilde{B}(\varphi_2))\big) + \frac{d}{dt}\big((\nabla a)(b(\varphi_2)-b(\varphi_1)) , \nabla(\widetilde{B}(\varphi_1)-\widetilde{B}(\varphi_2))\big) \nonumber\\ &+ \big((\nabla a)m(\varphi_1)\varphi' , \nabla(\widetilde{B}(\varphi_1)-\widetilde{B}(\varphi_2))\big) + \big((\nabla a)(m(\varphi_1)-m(\varphi_2))\varphi_2' , \nabla(\widetilde{B}(\varphi_1)-\widetilde{B}(\varphi_2))\big) \nonumber\\ &
  + \big(\textbf{u}_1\nabla\varphi, (\widetilde{B}(\varphi_1)-\widetilde{B}(\varphi_2))'\big)+ \big(\textbf{u}\nabla\varphi_2, (\widetilde{B}(\varphi_1)-\widetilde{B}(\varphi_2))'\big) = 0 .
\end{align}
We define, 
\begin{align*}
    \Psi &= \lVert\nabla(\widetilde{B}(\varphi_1)-\widetilde{B}(\varphi_2))\rVert^2 + 2\big(m(\varphi_1)((\nabla a)\varphi-\nabla J*\varphi),\nabla (\widetilde{B}(\varphi_1)-\widetilde{B}(\varphi_2))\big) \\
    &+ 2\big((m(\varphi_1)-m(\varphi_2))((\nabla a)\varphi_2-\nabla J*\varphi_2), \nabla(\widetilde{B}(\varphi_1)-\widetilde{B}(\varphi_2))\big) + 2\big((\nabla a)(b(\varphi_2)-b(\varphi_1)) , \nabla(\widetilde{B}(\varphi_1)-\widetilde{B}(\varphi_2))\big).
\end{align*}
Using the above notation \eqref{eq3.53} reduces to the following form.
\begin{align}\label{eq3.54}
&\frac{1}{2}\frac{d}{dt}\Psi + (\varphi', (m(\varphi_1)a + \lambda(\varphi_1))\varphi')+ (\varphi', (m(\varphi_1)-m(\varphi_2))a\varphi_2') + (\varphi', (\lambda(\varphi_1)-\lambda(\varphi_2))\varphi_2')   \nonumber\\ & - \big(m'(\varphi_1)\varphi_1'((\nabla a)\varphi-\nabla J*\varphi),\nabla (\widetilde{B}(\varphi_1)-\widetilde{B}(\varphi_2))\big) - \big(m(\varphi_1)(\varphi'\nabla a-\nabla J*\varphi'),\nabla (\widetilde{B}(\varphi_1)-\widetilde{B}(\varphi_2))\big)
\nonumber\\ & -\big((m(\varphi_1)-m(\varphi_2))(\varphi_2'\nabla a-\nabla J*\varphi_2'), \nabla(\widetilde{B}(\varphi_1)-\widetilde{B}(\varphi_2))\big) -\big(m'(\varphi_1)\varphi'((\nabla a)\varphi_2-\nabla J*\varphi_2), \nabla(\widetilde{B}(\varphi_1)-\widetilde{B}(\varphi_2))\big) \nonumber\\ & -\big((m'(\varphi_1)-m'(\varphi_2))\varphi_2'(\varphi_2\nabla a-\nabla J*\varphi_2), \nabla(\widetilde{B}(\varphi_1)-\widetilde{B}(\varphi_2))\big) + \big((\nabla a)m(\varphi_1)\varphi' , \nabla(\widetilde{B}(\varphi_1)-\widetilde{B}(\varphi_2))\big) \nonumber\\&+ \big((\nabla a)(m(\varphi_1)-m(\varphi_2))\varphi_2' , \nabla(\widetilde{B}(\varphi_1)-\widetilde{B}(\varphi_2))\big) + \big(\textbf{u}_1\nabla\varphi, (\widetilde{B}(\varphi_1)-\widetilde{B}(\varphi_2))'\big) \nonumber\\ &+ \big(\textbf{u}\nabla\varphi_2, (\widetilde{B}(\varphi_1)-\widetilde{B}(\varphi_2))'\big) = 0.
\end{align}
Let $I_1, I_2,..., I_{13}$ represent the terms on the left-hand side  of \eqref{eq3.54}, respectively. We estimate each of these terms by applying H\"olders, Gagliardo-Nirenberg and Young's inequalities. From now on we assume c is a generic constant that varies from step to step.  
\begin{align*}
|I_2| &\geq \alpha_1\lVert \varphi'\rVert^2 \hspace{1cm}\text{ by [\textbf{A4}] },\\
|I_3| &\leq c \lVert\varphi'\rVert\lVert\varphi\rVert_{L^4}\lVert\varphi_2'\rVert_{L^4} \leq c \lVert\varphi'\rVert\lVert\varphi\rVert_{V}\lVert\varphi_2'\rVert_{V}\\
 &\leq \epsilon \lVert\varphi'\rVert^2 + c\lVert\varphi_2'\rVert_{V}^2\lVert\varphi\rVert_{V}^2,\\
 |I_4| &\leq c \lVert\varphi'\rVert\lVert\varphi\rVert_{L^4}\lVert\varphi_2'\rVert_{L^4} \leq c \lVert\varphi'\rVert\lVert\varphi\rVert_{V}\lVert\varphi_2'\rVert_{V}\\
 &\leq \epsilon \lVert\varphi'\rVert^2 + c\lVert\varphi_2'\rVert_{V}^2\lVert\varphi\rVert_{V}^2,\\
 |I_5| &\leq c \lVert\varphi_1'\rVert_{L^4}\lVert\varphi\rVert_{L^4}\lVert\nabla (\widetilde{B}(\varphi_1)-\widetilde{B}(\varphi_2))\rVert
 \leq c \lVert\varphi_1'\rVert_{V}\lVert\varphi\rVert_{V}\lVert\nabla (\widetilde{B}(\varphi_1)-\widetilde{B}(\varphi_2))\rVert\\
 &\leq c\lVert\varphi_1'\rVert_{V}^2\lVert\varphi\rVert_{V}^2+ c\lVert\nabla (\widetilde{B}(\varphi_1)-\widetilde{B}(\varphi_2))\rVert^2,\\
 |I_6| &\leq c \lVert\varphi'\rVert\lVert\nabla (\widetilde{B}(\varphi_1)-\widetilde{B}(\varphi_2))\rVert \\ &\leq \epsilon \lVert\varphi'\rVert^2 + c\lVert\nabla (\widetilde{B}(\varphi_1)-\widetilde{B}(\varphi_2))\rVert^2,\\
 |I_7| &\leq c \lVert\varphi_2'\rVert_{L^4}\lVert\varphi\rVert_{L^4}\lVert\nabla (\widetilde{B}(\varphi_1)-\widetilde{B}(\varphi_2))\rVert\leq c \lVert\varphi_2'\rVert_{V}\lVert\varphi\rVert_{V}\lVert\nabla (\widetilde{B}(\varphi_1)-\widetilde{B}(\varphi_2))\rVert\\
 &\leq c\lVert\varphi_2'\rVert_{V}^2\lVert\varphi\rVert_{V}^2+ c\lVert\nabla (\widetilde{B}(\varphi_1)-\widetilde{B}(\varphi_2))\rVert^2,\\
  |I_8| &\leq c \lVert\varphi_2\rVert_{L^\infty}\lVert\varphi'\rVert\lVert\nabla (\widetilde{B}(\varphi_1)-\widetilde{B}(\varphi_2))\rVert\\
  &\leq \epsilon \lVert\varphi'\rVert^2 + c\lVert\nabla (\widetilde{B}(\varphi_1)-\widetilde{B}(\varphi_2))\rVert^2,\\
 |I_9| & \leq c \lVert\varphi_2'\rVert_{L^4}\lVert\varphi\rVert_{L^4}\lVert\nabla (\widetilde{B}(\varphi_1)-\widetilde{B}(\varphi_2))\rVert\\
 &\leq c\lVert\varphi_2'\rVert_{V}^2\lVert\varphi\rVert_{V}^2+ c\lVert\nabla (\widetilde{B}(\varphi_1)-\widetilde{B}(\varphi_2))\rVert^2,\\ 
    |I_{10}| &\leq c\lVert\varphi'\rVert \lVert\nabla(\widetilde{B}(\varphi_1)-\widetilde{B}(\varphi_2))\rVert \\
    &\leq  \epsilon\lVert\varphi'\rVert^2+ c\lVert\nabla(B(\varphi_1)-B(\varphi_2))\rVert^2,\\
    |I_{11}|&\leq c\lVert\varphi\rVert_{L^4}\lVert\varphi_2'\rVert_{L^4}\lVert\nabla(B(\varphi_1)-B(\varphi_2))\rVert\\
    &\leq  c\lVert\varphi_2'\rVert_{V}^2\lVert\varphi\rVert_{V}^2 + c\lVert\nabla(B(\varphi_1)-B(\varphi_2))\rVert^2.
\end{align*}
Moreover, using the expression for $(\widetilde{B}(\varphi_1)-\widetilde{B}(\varphi_2))'$ and H\"older's, Gagliardo-Nirenberg, Sobolev and Young's inequalities at appropriate places we get,
\begin{align*}
    |I_{12}| &= \big|\big(\textbf{u}_1\nabla\varphi, (\widetilde{B}(\varphi_1)-\widetilde{B}(\varphi_2))'\big)\big|\\ 
    &\leq  \big|\big(\textbf{u}_1\nabla\varphi, (m(\varphi_1)a + \lambda(\varphi_1))\varphi'\big)\big| + \big|\big(\textbf{u}_1\nabla\varphi,(\lambda(\varphi_1)-\lambda(\varphi_2))\varphi_2'\big)\big| + \big|\big(\textbf{u}_1\nabla\varphi,(m(\varphi_1)-m(\varphi_2))a\varphi_2'\big)\big|\\
    &\leq c\lVert\textbf{u}_1\rVert_{L^\infty} \lVert\nabla\varphi\rVert \lVert\varphi'\rVert + c\lVert\textbf{u}_1\rVert_{L^\infty} \lVert\nabla\varphi\rVert \lVert\varphi\rVert_{L^4} \lVert\varphi_2'\rVert_{L^4}\\
    &\leq \epsilon \lVert\varphi'\rVert^2+ c\big(\lVert\textbf{u}_1\rVert_{L^\infty}^2+ \lVert\textbf{u}_1\rVert_{L^\infty}\lVert\varphi_2'\rVert_{V} \big) \lVert\varphi\rVert_{V}^2\\
    &\leq \epsilon \lVert\varphi'\rVert^2+ c\big(1+\lVert\varphi_2'\rVert_{V} \big)\lVert\varphi\rVert_{V}^2,\\
    |I_{13}| &= \big(\textbf{u}\nabla\varphi_2, (\widetilde{B}(\varphi_1)-\widetilde{B}(\varphi_2))'\big)\\ 
    &\leq \big|\big(\textbf{u}\nabla\varphi_2, (m(\varphi_1)a + \lambda(\varphi_1))\varphi'\big)\big| + \big|\big(\textbf{u}\nabla\varphi_2,(\lambda(\varphi_1)-\lambda(\varphi_2))\varphi_2'\big)\big| + \big|\big(\textbf{u}\nabla\varphi_2,(m(\varphi_1)-m(\varphi_2))a\varphi_2'\big)\big|\\
    &\leq c\lVert\textbf{u}\rVert_{L^4} \lVert\nabla\varphi_2\rVert_{L^4} \lVert\varphi'\rVert + c\lVert\textbf{u}\rVert_{L^4} \lVert\nabla\varphi_2\rVert_{L^4} \lVert\varphi\rVert_{L^4} \lVert\varphi_2'\rVert_{L^4}\\
    &\leq \epsilon \lVert\varphi'\rVert^2+ c\lVert\nabla\varphi_2\rVert \lVert\varphi_2\rVert_{H^2}\lVert\textbf{u}\rVert \lVert\nabla\textbf{u}\rVert+ c\lVert\nabla\textbf{u}\rVert \lVert\nabla\varphi_2\rVert^{\frac{1}{2}} \lVert\varphi_2\rVert_{H^2}^{\frac{1}{2}}\lVert\varphi\rVert_{V} \lVert\varphi_2'\rVert_{V}\\
    &\leq \epsilon \lVert\varphi'\rVert^2+ 2\epsilon_0 \lVert\nabla\textbf{u}\rVert^2+ 
    c\lVert\nabla\varphi_2\rVert^2 \lVert\varphi_2\rVert_{H^2}^2\lVert\textbf{u}\rVert^2  + c\lVert\varphi_2\rVert_{H^2}^2 \lVert\varphi_2'\rVert_{V}^2\lVert\varphi\rVert_{V}^2. 
\end{align*}
Combining all the above estimates, and for a choice of $\epsilon = \frac{\alpha_1}{14}$ and $\epsilon_0=\frac{1}{4} $, we obtain,
\begin{align}\label{eq2.5}
  \frac{1}{2}\frac{d}{dt}\Psi + \frac{\alpha_1}{2}\lVert\varphi'\rVert^2 &\leq   \frac{1}{2}\lVert\nabla\textbf{u}\rVert^2  + c\lVert\nabla\varphi_2\rVert^2\lVert\varphi_2\rVert_{H^2}^2\lVert \textbf{u}\rVert^2 \nonumber
  + C\lVert\nabla(\widetilde{B}(\varphi_1)-\widetilde{B}(\varphi_2))\rVert^2   \\ &+ C\big( 1+ \lVert\varphi_2'\rVert_{V}^2+\lVert\varphi_1'\rVert_{V}^2+ \lVert\varphi_2'\rVert_{V}^2\lVert\varphi_2\rVert_{H^2}^2 \big) \lVert\varphi\rVert_{V}^2.
\end{align} 
Estimate norm of $\nabla(\widetilde{B}(\varphi_1)-\widetilde{B}(\varphi_2)) =(m(\varphi_1)a +\lambda(\varphi_1))\nabla\varphi + (m(\varphi_1)-m(\varphi_2))a\nabla\varphi_2+ (\lambda(\varphi_1)-\lambda(\varphi_2))\nabla\varphi_2$ in $H$ we get,\vspace{-.25cm}
\begin{align}
    \lVert\nabla(\widetilde{B}(\varphi_1)-\widetilde{B}(\varphi_2))\rVert^2 &\leq C\lVert\varphi\rVert_{V}^2,\label{eq3.057}\\
    \lVert\nabla(\widetilde{B}(\varphi_1)-\widetilde{B}(\varphi_2))\rVert^2 &\geq \alpha_1^2\lVert\nabla\varphi\rVert^2-c\lVert\nabla\varphi\rVert\lVert\varphi\rVert_{L^4}\lVert\nabla\varphi_2\rVert_{L^4}\nonumber\\
    &\geq \alpha_1^2\lVert\nabla\varphi\rVert^2- c\lVert\nabla\varphi\rVert\lVert\varphi\rVert^\frac{1}{2}\lVert\nabla\varphi\rVert^\frac{1}{2}\nonumber\\
    & \geq \frac{\alpha_1^2}{2}\lVert\nabla\varphi\rVert^2-c\lVert\varphi\rVert^2.\label{eq3.058}
    \end{align}
Consider the expression for $\Psi$. By a direct calculation and comparing that with the estimate derived for $\lVert\nabla(\widetilde{B}(\varphi_1)-\widetilde{B}(\varphi_2))\rVert$ we obtain,\vspace{-.25cm}
\begin{equation}\label{eq3.06}
     \frac{\alpha_1^2}{4}\lVert\nabla\varphi\rVert^2-c\lVert\varphi\rVert^2 \leq |\Psi| \leq C\lVert\varphi\rVert_{V}^2.
\end{equation}
For velocity equation  \eqref{eq2.3}, choosing test function as  $\textbf{u}$ and  applying H\"older's, Young's and Korn's inequalities we get,
\begin{equation}\label{eq2.11}
   \lVert\textbf{u}\rVert_{\mathbb{V}_{div}}^2 \leq c(\lVert\varphi\rVert^2+ \lVert\nabla\varphi\rVert^2+ \lVert \textbf{h}\rVert_{\mathbb{V}'_{div}}^2).
\end{equation}
We will multiply \eqref{eq2.11} with an appropriate large constant and add  it to \eqref{eq2.5} so as to absorb the coefficients of ${\lVert\textbf{u}\rVert}^2 $ and ${\lVert \nabla \textbf{u}\rVert}^2 $  terms on the right hand side  of \eqref{eq2.5}. For example we can multiply  \eqref{eq2.5} by $c_0, \text{ such that }  c_0 \geq (1+2c\lVert\nabla\varphi_2\rVert^2\lVert\varphi_2\rVert_{H^2}^2)$.  
Further using \eqref{eq3.057} and \eqref{eq3.06}, we get,
\begin{align}\label{eq2.12}
  \frac{1}{2}\frac{d}{dt}\Psi + \frac{\alpha_1}{2}\lVert\varphi'\rVert^2 + \frac{c_0}{2}\lVert\textbf{u}\rVert_{\mathbb{V}_{div}}^2 &\leq  C\lVert \textbf{h}\rVert_{\mathbb{V}'_{div}}^2 + C\Big( 1+ \lVert\varphi_2'\rVert_{V}^2+\lVert\varphi_1'\rVert_{V}^2+ \lVert\varphi_2'\rVert_{V}^2\lVert\varphi_2\rVert_{H^2}^2 \Big) \lVert\varphi\rVert_{V}^2 \nonumber\\
  &\leq C\lVert \textbf{h}\rVert_{\mathbb{V}'_{div}}^2 + C\Lambda(\Psi +\lVert\varphi\rVert^2), 
\end{align}
Where $\Lambda = 1+ \lVert\varphi_2'\rVert_{V}^2+\lVert\varphi_1'\rVert_{V}^2+ \lVert\varphi_2'\rVert_{V}^2\lVert\varphi_2\rVert_{H^2}^2 $. 

We multiply the first equation in the weak formulation of  the difference equation \eqref{eq3.052} by $ \varphi$. Using assumption [A4], Holder's and Young's inequalities we arrive at 
\begin{align}\label{eq3.591}
    \frac{1}{2}\frac{d}{dt}\lVert\varphi\rVert^2  + \frac{\alpha_1}{2} \lVert \nabla \varphi\rVert^2 \leq \epsilon \lVert{\textbf{u}}\rVert^2 + c\lVert\varphi\rVert^2
    \end{align}
and using \eqref{eq2.11}\vspace{-.5cm}
\begin{align}\label{eq3.59}
    \frac{1}{2}\frac{d}{dt}\lVert\varphi\rVert^2 + \frac{\alpha_1}{2} \lVert \nabla \varphi\rVert^2   \leq C(\lVert\varphi\rVert^2 + \lVert\textbf{h}\rVert_{\mathbb{V}_{div}'}^2).
\end{align}
Add \eqref{eq3.59} to \eqref{eq2.12} and integrate over $[0,T]$. Applying Gronwall's inequality and \eqref{eq10} results in the following inequality
\begin{align}\label{eq2.13}
  \frac{1}{2}\Psi(t)+ \frac{1}{2}\lVert\varphi(t)\rVert^2 + \frac{\alpha_1}{2}\lVert\varphi'\rVert_{L^2(0,T;H)}^2 +  \frac{c_0}{2}\lVert \textbf{u}\rVert_{L^2(0,T;\mathbb{V}_{div})}^2&\leq  C\big(1+ \gamma(t)\big) \big(|\varphi_0| + \lVert \textbf{h}\rVert_{L^2(0,T;\mathbb{V}'_{div})}^2\big). 
\end{align} 
holds for almost all $t \in [0,T]$, for some $ C>0 $ and $\gamma(t)$ a function of time. 
Using \eqref{eq3.06} we have,
\begin{align*}
    \lVert\nabla\varphi(t)\rVert^2 \leq c(|\psi(t)|+\lVert\varphi(t)\rVert^2) &\leq  C\big(1+ \gamma(t)\big) \big(|\varphi_0| + \lVert \textbf{h}\rVert_{L^2(0,T;\mathbb{V}'_{div})}^2\big), 
\end{align*}
holds for almost all $t\in [0,T]$.
 In order to prove \eqref{eq3.1} we need to estimate $,\lVert\varphi\rVert_{(L^2 (0, T;H^2)}$. For, first we estimate $\lVert\widetilde{B}(\varphi_1)-\widetilde{B}(\varphi_2)\rVert_{H^2}$ and bound $\lVert\varphi\rVert_{H^2} $   estimate by it.
\\\\
(\textbf{Estimate for $\lVert\widetilde{B}(\varphi_1)-\widetilde{B}(\varphi_2)\rVert_{H^2}$})
Recall  as in \eqref{eq028} we have,
\begin{equation}\label{eq0.28}
    \lVert\widetilde{B}(\varphi_1)-\widetilde{B}(\varphi_2)\rVert_{H^2} \leq C(\lVert\Delta(\widetilde{B}(\varphi_1)-\widetilde{B}(\varphi_2))\rVert + \lVert\widetilde{B}(\varphi_1)-\widetilde{B}(\varphi_2)\rVert_{V} + \lVert\nabla(\widetilde{B}(\varphi_1)-\widetilde{B}(\varphi_2)).\textbf{n}\rVert_{H^\frac{1}{2}(\partial\Omega)} ).
\end{equation}
We estimate the terms on R.H.S one by one. Observe,
\begin{align*}
    \Delta(\widetilde{B}(\varphi_1)-\widetilde{B}(\varphi_2)) &= \varphi'+ \textbf{u}_1\nabla\varphi+ \textbf{u}\nabla\varphi_2 - \nabla\cdot\big(m(\varphi_1)((\nabla a)\varphi -\nabla J*\varphi)\big) +\nabla\cdot\big( (\nabla a)(b(\varphi_1)-b(\varphi_2))\big) \\& \hspace{.5cm}- \nabla\cdot\big((m(\varphi_1)-m(\varphi_2))((\nabla a)\varphi_2 -\nabla J*\varphi_2)\big)\\
     &= \varphi'+ \textbf{u}_1\nabla\varphi+ \textbf{u}\nabla\varphi_2 - m'(\varphi_1)\nabla\varphi_1((\nabla a)\varphi -\nabla J*\varphi) - m'(\varphi_1)\nabla\varphi((\nabla a)\varphi_2 -\nabla J*\varphi_2)
     \\& \hspace{.5cm}  - m(\varphi_1)((\Delta a)\varphi+ \nabla a\nabla\varphi  -\Delta J*\varphi) - (m'(\varphi_1)-m'(\varphi_2))\nabla\varphi_2((\nabla a)\varphi_2 -\nabla J*\varphi_2) \\ &\hspace{.5cm}- (m(\varphi_1)-m(\varphi_2))((\Delta a)\varphi_2+\nabla a\nabla\varphi_2 -\Delta J*\varphi_2) + (\nabla a)(m(\varphi_1)-m(\varphi_2))\nabla\varphi_2
     \\ &\hspace{.5cm} +(\Delta a)(b(\varphi_1)-b(\varphi_2))+ (\nabla a)m(\varphi_1)\nabla\varphi .
\end{align*}
As in \eqref{eq2.2}, testing $\Delta(\widetilde{B}(\varphi_1)-\widetilde{B}(\varphi_2)) $ with $\psi$ and using H\"olders, Gagliardo-Nirenberg and Young's inequalities and uniform estimates derived for the strong solution \eqref{eq093}, \eqref{eq2.26}, \eqref{eq2.39}, we have
\begin{align*}
    \lVert \Delta(\widetilde{B}(\varphi_1)-\widetilde{B}(\varphi_2))\rVert &\leq c\big(\lVert\varphi'\rVert + \lVert\textbf{u}_1\rVert_{L^\infty}\lVert\nabla\varphi\rVert + \lVert\textbf{u}\rVert_{L^4}\lVert\nabla\varphi_2\rVert_{L^4} + \lVert\nabla\varphi_1\rVert_{L^4}\lVert\varphi\rVert_{L^4} \\& \hspace{.5cm}+ \lVert\varphi\rVert_{V} + \lVert\nabla\varphi\rVert\lVert\varphi_2\rVert_{L^\infty} + \lVert\varphi\rVert_{L^4}\lVert\nabla\varphi_2\rVert_{L^4}\lVert\varphi_2\rVert_{L^\infty} \\ &\hspace{.5cm} +\lVert\varphi\rVert\lVert\varphi_2\rVert_{L^\infty} + \lVert\varphi\rVert_{L^4}\lVert\nabla\varphi_2\rVert_{L^4} + \lVert b(\varphi_1)-b(\varphi_2)\rVert\big) \\
     &\leq c\big(\lVert\varphi'\rVert + \lVert\textbf{u}_1\rVert_{L^\infty}\lVert\nabla\varphi\rVert + \lVert\nabla\textbf{u}\rVert\lVert\nabla\varphi_2\rVert^{\frac{1}{2}}\lVert\varphi_2\rVert_{H^2}^{\frac{1}{2}} + \lVert\varphi\rVert_{V} \\& \hspace{.5cm}+ \lVert\nabla\varphi_1\rVert^{\frac{1}{2}}\lVert\varphi_1\rVert_{H^2}^{\frac{1}{2}}\lVert\varphi\rVert_{V} 
      +  \lVert\nabla\varphi_2\rVert^{\frac{1}{2}}\lVert\varphi_2\rVert_{H^2}^{\frac{1}{2}}\lVert\varphi\rVert_{V}\big)  \\
     &\leq c\big( 1 +\lVert\textbf{u}_1\rVert_{L^\infty} + \lVert\nabla\varphi_1\rVert^{\frac{1}{2}}\lVert\varphi_1\rVert_{H^2}^{\frac{1}{2}} + \lVert\nabla\varphi_2\rVert^{\frac{1}{2}}\lVert\varphi_2\rVert_{H^2}^{\frac{1}{2}}\big) \lVert\varphi\rVert_{V}\\ &\hspace{.5cm} + c(\lVert\varphi'\rVert + \lVert\nabla\varphi_2\rVert^{\frac{1}{2}}\lVert\varphi_2\rVert_{H^2}^{\frac{1}{2}}\lVert\nabla\textbf{u}\rVert) .
\end{align*}
For the boundary term recall our boundary condition namely, 
\begin{align*}
   [m(\varphi_1)\nabla\mu_1 - m(\varphi_2)\nabla\mu_2]\cdot\textbf{n} = 0 &= [ \nabla(\widetilde{B}(\varphi_1)-\widetilde{B}(\varphi_2)) - (\nabla a)(b(\varphi_1)-b(\varphi_2)) + m(\varphi_1)\big((\nabla a)\varphi-\nabla J*\varphi\big) \\ &\hspace{.5cm} +(m(\varphi_1)-m(\varphi_2)\big((\nabla a)\varphi_2-\nabla J*\varphi_2\big)]\cdot\textbf{n} 
\end{align*}
Using the continuous embedding $V(\Omega)\subset H^\frac{1}{2}(\partial\Omega)$, regularity of the strong solution, $\varphi_i\in L^\infty(\Omega\times [0,T])$ and H\"older's inequality we get,
\begin{align*}
    \lVert\nabla(\widetilde{B}(\varphi_1)-\widetilde{B}(\varphi_2)).\textbf{n}\rVert_{H^\frac{1}{2}(\partial\Omega)} &\leq \lVert(\nabla a)(b(\varphi_1)-b(\varphi_2))\cdot\textbf{n}\rVert_{H^\frac{1}{2}(\partial\Omega)} + \lVert m(\varphi_1)\big((\nabla a)\varphi-\nabla J*\varphi\big)\cdot\textbf{n}\rVert_{H^\frac{1}{2}(\partial\Omega)} \\
    &\hspace{.5cm}    \lVert (m(\varphi_1)-m(\varphi_2)\big((\nabla a)\varphi_2-\nabla J*\varphi_2\big)\cdot\textbf{n}\rVert_{H^\frac{1}{2}(\partial\Omega)}  \\
    &\leq \lVert(\nabla a)(b(\varphi_1)-b(\varphi_2))\rVert_{V} + \lVert m(\varphi_1)\big((\nabla a)\varphi-\nabla J*\varphi\big)\rVert_{V} \\ &\hspace{.5cm} + \lVert (m(\varphi_1)-m(\varphi_2)\big((\nabla a)\varphi_2-\nabla J*\varphi_2\big)\rVert_{V}  \\ 
    &\leq c\lVert\varphi\rVert_{V}.
\end{align*}
Substituting back in \eqref{eq0.28}, we get,
\begin{align}\label{eq2.6}
     \lVert \widetilde{B}(\varphi_1)-\widetilde{B}(\varphi_2)\rVert_{H^2} &\leq c\lVert\varphi'\rVert + c\lVert\nabla\varphi_2\rVert^{\frac{1}{2}}\lVert\varphi_2\rVert_{H^2}^{\frac{1}{2}}\lVert\nabla\textbf{u}\rVert  \nonumber
     \\& \hspace{.5cm} + c\Big( 1 +\lVert\textbf{u}_1\rVert_{L^\infty} + \lVert\nabla\varphi_1\rVert^{\frac{1}{2}}\lVert\varphi_1\rVert_{H^2}^{\frac{1}{2}} + \lVert\nabla\varphi_2\rVert^{\frac{1}{2}}\lVert\varphi_2\rVert_{H^2}^{\frac{1}{2}} \Big) \lVert\varphi\rVert_{V}.
\end{align}
Next we will estimate $\lVert\varphi\rVert_{H^2}$ in terms of $\lVert \widetilde{B}(\varphi_1)-\widetilde{B}(\varphi_2)\rVert_{H^2}$.
\\\\
(\textbf{Estimate for $\lVert\varphi\rVert_{H^2}$})
\\Since $\int_\Omega\varphi =0$, using Poincar\'e inequality we have,
$$ \lVert\varphi\rVert_{H^2} \leq c(\lVert D^2\varphi\rVert+ \lVert \varphi\rVert).$$
The first order partial derivative, $\partial_i\widetilde{B}(\varphi_k)=(m(\varphi_k)a+\lambda(\varphi_k))\partial_i\varphi_k+(\partial_ia)b(\varphi_k)$, for $k=1,2$ and $ i \in \{1,2\}$. 
We have the second order derivative, for $i,j \in \{1,2\}$,
\begin{align*}
    \partial_{ij}\widetilde{B}(\varphi_k) = (m(\varphi_k)a+\lambda(\varphi_k))\partial_{ij}\varphi_k + \partial_j(m(\varphi_k)a+\lambda(\varphi_k))\partial_i\varphi_k + \partial_j\big((\partial_ia)b(\varphi_k)\big).
\end{align*}
By assumption $[\textbf{A4}]$, we have the term $(m(\varphi_k)a+\lambda(\varphi_k))$ is bounded away from $0$. Therefore,
\begin{align*}
    \partial_{ij}\varphi_k &= \frac{\Big(\partial_{ij}\widetilde{B}(\varphi_k)- \partial_j\big(b(\varphi_k)\partial_ia\big)\Big)}{\big(m(\varphi_k)a + \lambda(\varphi_k)\big)} - \frac{\partial_j(m(\varphi_k)a+\lambda(\varphi_k))\partial_i\varphi_k }{\big(m(\varphi_k)a + \lambda(\varphi_k)\big)}
    \\&=\frac{\Big(\partial_{ij}\widetilde{B}(\varphi_k)- \partial_j\big(b(\varphi_k)\partial_ia\big)\Big)}{\big(m(\varphi_k)a + \lambda(\varphi_k)\big)} - \frac{\partial_j(m(\varphi_k)a+\lambda(\varphi_k))\big( \partial_i\widetilde{B}(\varphi_k)- (\partial_ia)b(\varphi_k)\big)}{\big(m(\varphi_k)a + \lambda(\varphi_k)\big)^2}.
\end{align*}
The second step is obtained by substituting for $\partial_i\varphi_k = \frac{\big( \partial_i\widetilde{B}(\varphi_k)- (\partial_ia)b(\varphi_k)\big) }{\big(m(\varphi_k)a + \lambda(\varphi_k)\big)}$. Then for $\varphi = \varphi_1 - \varphi_2$ we have,
\begin{align*}
    \partial_{ij}\varphi&=  \frac{\Big(\partial_{ij}\widetilde{B}(\varphi_1)- \partial_j\big(b(\varphi_1)\partial_ia\big)\Big)}{\big(m(\varphi_1)a + \lambda(\varphi_1)\big)} -\frac{\Big(\partial_{ij}\widetilde{B}(\varphi_2)- \partial_j\big(b(\varphi_2)\partial_ia\big)\Big)}{\big(m(\varphi_2)a + \lambda(\varphi_2)\big)}
    \\&\hspace{.5cm} - \frac{\partial_j(m(\varphi_1)a+\lambda(\varphi_1))\big( \partial_i\widetilde{B}(\varphi_1)- (\partial_ia)b(\varphi_1)\big)}{\big(m(\varphi_1)a + \lambda(\varphi_1)\big)^2}  + \frac{\partial_j(m(\varphi_2)a+\lambda(\varphi_2))\big( \partial_i\widetilde{B}(\varphi_2)- (\partial_ia)b(\varphi_2)\big)}{\big(m(\varphi_2)a + \lambda(\varphi_2)\big)^2}
    \\&=  \frac{\partial_{ij}(\widetilde{B}(\varphi_1)-\widetilde{B}(\varphi_2))}{\big(m(\varphi_1)a + \lambda(\varphi_1)\big)}- \frac{\partial_j(b(\varphi_1)-b(\varphi_2))\partial_ia}{\big(m(\varphi_1)a + \lambda(\varphi_1)\big)} - \frac{(b(\varphi_1)-b(\varphi_2))\partial_{ij}a}{\big(m(\varphi_1)a + \lambda(\varphi_1)\big)}\\ 
    &\hspace{.5cm} \Big(\frac{1}{\big(m(\varphi_1)a + \lambda(\varphi_1)\big)}-\frac{1}{\big(m(\varphi_2)a + \lambda(\varphi_2)\big)} \Big)\Big(\partial_{ij}\widetilde{B}(\varphi_2)- \partial_j\big(b(\varphi_2)\partial_ia\big)\Big)\\
    &\hspace{.5cm} - \frac{\big( \partial_i\widetilde{B}(\varphi_1)- (\partial_i a)b(\varphi_1)\big)\big( \partial_j(a(m(\varphi_1)-m(\varphi_2))+(\lambda(\varphi_1)-\lambda(\varphi_2)))\big)}{\big(m(\varphi_1)a + \lambda(\varphi_1)\big)^2}\\
    &\hspace{.5cm} - \frac{\big( \partial_j(m(\varphi_2)a+\lambda(\varphi_2)\big)\big( \partial_i(\widetilde{B}(\varphi_1)-\widetilde{B}(\varphi_2))-(b(\varphi_1)-b(\varphi_2))\partial_i a\big)}{\big(m(\varphi_1)a + \lambda(\varphi_1)\big)^2}\\
    &\hspace{.5cm} - \Big(\frac{1}{\big(m(\varphi_1)a + \lambda(\varphi_1)\big)^2}-\frac{1}{\big(m(\varphi_2)a + \lambda(\varphi_2)\big)^2} \Big)\big( \partial_i\widetilde{B}(\varphi_2)- b(\varphi_2)\partial_i a\big)\big(\partial_j(m(\varphi_2)a+\lambda(\varphi_2))\big).
\end{align*}
Naming each term on R.H.S. $I_1, I_2,...I_7$ in order and we evaluate $H$ norm of each term separately by applying H\"olders, Agmon's, Young's and Gagliardo-Nirenberg inequalities and uniform estimates derived for the strong solution, \eqref{eq2.26}, \eqref{eq2.39}.  
\begin{align*}
   \lVert I_1\rVert &\leq  \frac{1}{\alpha_1}\lVert\partial_{ij}(\widetilde{B}(\varphi_1)-\widetilde{B}(\varphi_2))\rVert \leq \frac{1}{\alpha_1}\lVert \widetilde{B}(\varphi_1)-\widetilde{B}(\varphi_2)\rVert_{H^2},\\ 
   \lVert I_2\rVert &\leq  \frac{1}{\alpha_1}\lVert\partial_ia\rVert_{L^\infty}\lVert\partial_j(b(\varphi_1)-b(\varphi_2))\rVert \leq c\lVert\varphi\rVert_{V},\\
   \lVert I_3\rVert &\leq  \frac{1}{\alpha_1} \lVert\partial_{ij}a\rVert_{L^\infty}\lVert b(\varphi_1)-b(\varphi_2))\rVert \leq c\lVert\varphi\rVert,\\
   \lVert I_4\rVert &\leq c_1\lVert\varphi\rVert_{L^\infty}(\lVert\partial_{ij} \widetilde {B}(\varphi_2)\rVert + \lVert\partial_j b(\varphi_2)\rVert +\lVert b(\varphi_2)\rVert )\leq c\lVert\varphi\rVert^{\frac{1}{2}}\lVert\varphi\rVert_{H^2}^{\frac{1}{2}} \\
   &\leq \epsilon \lVert\varphi\rVert_{H^2} + c \lVert\varphi\rVert, \\
   \lVert I_5\rVert &\leq \frac{1}{\alpha_1^2}(\lVert\partial_i \widetilde{B}(\varphi_1)\rVert_{L^4}+\lVert b(\varphi_1)\rVert_{L^4})\lVert\partial_j\varphi\rVert_{L^4} \leq c\lVert\nabla\varphi\rVert^{\frac{1}{2}}\lVert\varphi\rVert_{H^2}^{\frac{1}{2}}\\
   &\leq \epsilon \lVert\varphi\rVert_{H^2} + c \lVert\varphi\rVert_{V}, \\
   \lVert I_6 \rVert &\leq \lVert\partial_j\varphi_2\rVert_{L^4}(\lVert\partial_i(\widetilde{B}(\varphi_1)-\widetilde{B}(\varphi_2))\rVert_{L^4} +\lVert b(\varphi_1)-b(\varphi_2)\rVert_{L^4})\\
   &\leq c(\lVert \widetilde{B}(\varphi_1)-\widetilde{B}(\varphi_2)\rVert_{H^2} + \lVert\varphi\rVert_{V}),\\
   \lVert I_7 \rVert &\leq \lVert\varphi\rVert_{L^\infty}\lVert\partial_j \varphi_2\rVert_{L^4}(\lVert\partial_i\widetilde{B}(\varphi_2)\rVert_{L^4}+ \lVert b(\varphi_2)\rVert_{L^4})\leq c_3\lVert\varphi\rVert^{\frac{1}{2}}\lVert\varphi\rVert_{H^2}^{\frac{1}{2}} \\
   &\leq \epsilon \lVert\varphi\rVert_{H^2} + c \lVert\varphi\rVert.
\end{align*}
Combining all the above estimates in \eqref{eq2.6}, for a sufficiently small  $\epsilon >0$  we will get,
\begin{align}
    \lVert\varphi\rVert_{H^2} &\leq  C(\lVert \widetilde{B}(\varphi_1)-\widetilde{B}(\varphi_2)\rVert_{H^2} +\lVert\varphi\rVert_{V})\nonumber\\ &\leq C(\lVert\varphi'\rVert + \lVert\nabla\textbf{u}\rVert + \lVert\varphi\rVert_{V}).
\end{align}
This implies,
\begin{equation}\label{eq2.7}
    \lVert\varphi\rVert_{L^2(0,T;H^2)}^2 \leq  C(\lVert\varphi'\rVert_{L^2(0,T;H)}^2 + \lVert\textbf{u}\rVert_{L^2(0,T;\mathbb{V}_{div})}^2 + \lVert\varphi\rVert_{L^2(0,T;V)}^2).
\end{equation}
Combining \eqref{eq2.13} and \eqref{eq2.7}, we get the stability estimate, \eqref{eq3.1}.
\end{proof}

We have proved all the prerequisites to study the optimal control problem of the system \eqref{eq1}-\eqref{eq07} with a singular type potential. In the next section we will state the optimal control problem(OCP) and study the existence of a solution to the OCP.

\section{The Optimal Control Problem}

We define the optimal control problem as to minimise the tracking type cost functional defined by,
\begin{align}\label{eq04.1}
\mathcal{J}(\varphi,\textbf{u},\textbf{U}) \coloneqq \int\limits_{0}^{T} \lVert\varphi(t)-\varphi_{d}(t)\rVert^{2} dt + \int\limits_{0}^{T}\lVert\textbf{u}(t)-\textbf{u}_{d}(t)\rVert^{2} dt  +\int_{\Omega}\lVert\varphi(x,T)-\varphi_{\Omega}\rVert^{2} dx + \int\limits_{0}^{T}\lVert \textbf{U}(t)\rVert^{2} dt
\end{align}
in a bounded, closed and convex set of admissible controls defined by,
$$\mathcal{U}_{ad}=\{\textbf{U}\in \mathcal{U}: U_1(x,t)\leq \textbf{U}(x,t)\leq U_2(x,t), \text{ a.e } (x,t)\in \Omega\times (0,T) \},$$
subject to the system \eqref{eq1}-\eqref{eq07} with potential either $F_{log}$ or $F_{do}$.\vspace{.25cm}
\\Recall that $\mathcal{U}:=\{\textbf{h}\in L^\infty(0,T;\mathbb{G}_{div})|\textbf{h}_t\in L^2(0,T;\mathbb{V}_{div}')\}$. Here $\varphi_d \in L^2((0,T)\times \Omega )$, $\textbf{u}_d\in L^2(0,T; \mathbb{G}_{div})$, $\varphi_{\Omega}\in L^2(\Omega)$ are the desirable states  and the external forcing term $\textbf{U}$ acts as a control.  We denote by $(\varphi,\textbf{u})$  the unique strong solution to the system \eqref{eq1}-\eqref{eq07} corresponding to the control $\textbf{U} \in \mathcal{U}_{ad}$. The controls  $U_1, U_2 \in \mathcal{U} \cap L^\infty((0,T)\times \Omega )$ are fixed. 
\\Consider the control to state map defined from  $\mathcal{S}: \mathcal{U} \rightarrow \mathcal{V}$ with 
\begin{equation}\label{eq2.1}
    \mathcal{V} := \{ (\varphi, \textbf{u})\in L^{\infty}(0,T;H^{2}(\Omega))\times L^{2}(0,T; \textbf{H}^{2}(\Omega)) : \varphi' \in  L^\infty(0,T;H) \cap L^2(0,T;V)\}. 
\end{equation}
\\
In particular, $\mathcal{S}(\textbf{U}):=(\varphi, \textbf{u})$, where $(\varphi, \textbf{u})$ is the unique strong solution to the system \eqref{eq1}-\eqref{eq07} that corresponds to the control $\textbf{U}$. 

 From the continuous dependence estimates \eqref{eq10} and \eqref{eq3.1}, we can conclude that the map $\mathcal{S}$ is Lipschitz.  
Using the definition of $\mathcal{S}$, the optimal control problem can be written as follows:
$$ \underset{\textbf{U}\in \mathcal{U}_{ad}}{\text{min}} \{ \mathcal{J}(\mathcal{S}(\textbf{U}), \textbf{U})\}  \hspace{1cm} (OCP)$$

We will prove the existence of an optimal control to the (OCP) in the following theorem. Later on  we will characterise the optimal control in terms of  the adjoint variables. 
\vspace{.5cm}
\begin{theorem}[The existence of an optimal control]
Let $\varphi_0 \in H^2(\Omega) \cap L^\infty(\Omega)$  and $J \in W^{2,1}(\Omega)$. Assume $[\textbf{J}], [\textbf{A1}]-[\textbf{A4}]$ holds. In addition, assume that the mobility, $m \in C^2[-1,1]$ and $\lambda \in C^2[-1,1]$. Then the Optimal control problem (OCP) admits a solution.
\end{theorem}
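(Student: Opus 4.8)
The plan is to apply the direct method of the calculus of variations. Since $\mathcal{J} \geq 0$, the value $j := \inf_{\textbf{U} \in \mathcal{U}_{ad}} \mathcal{J}(\mathcal{S}(\textbf{U}), \textbf{U})$ is finite, so I would fix a minimizing sequence $\{\textbf{U}_n\} \subset \mathcal{U}_{ad}$ with $\mathcal{J}(\mathcal{S}(\textbf{U}_n), \textbf{U}_n) \to j$, and write $(\varphi_n, \textbf{u}_n) := \mathcal{S}(\textbf{U}_n)$ for the associated strong solutions. The first task is to collect uniform bounds. The pointwise constraints $U_1 \leq \textbf{U}_n \leq U_2$ with $U_1, U_2 \in L^\infty$ bound $\{\textbf{U}_n\}$ in $L^\infty(0,T;\mathbb{G}_{div})$, while the boundedness of $\mathcal{U}_{ad}$ in $\mathcal{U}$ bounds $\{(\textbf{U}_n)_t\}$ in $L^2(0,T;\mathbb{V}_{div}')$. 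Because the a priori estimates derived in the proof of the strong-solution existence theorem depend on the datum only through $\lVert \textbf{h}\rVert_\mathcal{U}$, they hold uniformly along the sequence, giving bounds for $\{\varphi_n\}$ in $L^\infty(0,T;H^2)$, for $\{\varphi_n'\}$ in $L^2(0,T;V) \cap L^\infty(0,T;H)$, and for $\{\textbf{u}_n\}$ in $L^2(0,T;\textbf{H}^2)$.

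Next I would extract (non-relabelled) subsequences converging weakly-$\ast$ or weakly in the respective spaces to limits $\textbf{U}^\ast, \varphi^\ast, \textbf{u}^\ast$. Since $\mathcal{U}_{ad}$ is convex and closed it is weakly closed, so $\textbf{U}^\ast \in \mathcal{U}_{ad}$. The crucial compactness comes from the Aubin--Lions lemma: exactly as in the strong-solution proof, the bounds on $\{\varphi_n\}$ and $\{\varphi_n'\}$ yield strong convergence $\varphi_n \to \varphi^\ast$ in $C([0,T];V)$, together with $|\varphi_n| \leq 1$ a.e. This strong convergence, combined with $m, \lambda \in C^2[-1,1]$ and the continuity of $F'$ on $(-1,1)$, allows the limit passage in every nonlinear term of the weak formulation: the transport term $\textbf{u}_n\nabla\varphi_n$ converges since $\textbf{u}_n \rightharpoonup \textbf{u}^\ast$ weakly while $\nabla\varphi_n \to \nabla\varphi^\ast$ strongly in $L^2(0,T;H)$; the mobility and potential terms $m(\varphi_n), \lambda(\varphi_n)$ and the nonlocal convolutions $\nabla J \ast \varphi_n$, $a\varphi_n$ converge by continuity and the strong convergence of $\varphi_n$; and the forcing $\langle \textbf{U}_n, \textbf{v}\rangle$ passes to the limit by the weak convergence of the controls. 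This identifies $(\varphi^\ast, \textbf{u}^\ast)$ as a solution of the state system driven by $\textbf{U}^\ast$, and by weak--strong uniqueness it coincides with $\mathcal{S}(\textbf{U}^\ast)$.

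Finally I would verify weak lower semicontinuity of $\mathcal{J}$. The strong convergence $\varphi_n \to \varphi^\ast$ in $C([0,T];H) \hookrightarrow L^2(0,T;H)$ makes the tracking term $\int_0^T \lVert\varphi_n - \varphi_d\rVert^2$ and the terminal term $\int_\Omega |\varphi_n(T) - \varphi_\Omega|^2$ converge, whereas the weak convergences $\textbf{u}_n \rightharpoonup \textbf{u}^\ast$ and $\textbf{U}_n \rightharpoonup \textbf{U}^\ast$ in $L^2(0,T;\mathbb{G}_{div})$ give, by lower semicontinuity of the norm under weak convergence, $\liminf_n \int_0^T \lVert\textbf{u}_n - \textbf{u}_d\rVert^2 \geq \int_0^T \lVert\textbf{u}^\ast - \textbf{u}_d\rVert^2$ and the analogous bound for the control term. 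Hence $\mathcal{J}(\varphi^\ast, \textbf{u}^\ast, \textbf{U}^\ast) \leq \liminf_n \mathcal{J}(\varphi_n, \textbf{u}_n, \textbf{U}_n) = j$, and since $\textbf{U}^\ast \in \mathcal{U}_{ad}$ this forces equality, so $\textbf{U}^\ast$ is an optimal control. I expect the main obstacle to be this limit passage: securing compactness of $\{\varphi_n\}$ strong enough to handle the products of the nonlinear mobility and singular potential against the only weakly convergent velocity and time-derivative terms. Once the Aubin--Lions strong convergence of $\varphi_n$ is in hand, the nonlocal and viscosity terms are routine.
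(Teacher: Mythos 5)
Your proposal is correct, and its skeleton --- minimizing sequence, uniform bounds on $(\varphi_n,\textbf{u}_n)=\mathcal{S}(\textbf{U}_n)$ inherited from the strong-solution a priori estimates, weak closedness of the convex set $\mathcal{U}_{ad}$, and weak lower semicontinuity of $\mathcal{J}$ --- is exactly the paper's. Where you genuinely diverge is the identification of the limit state. You pass to the limit term by term in the weak formulation, using Aubin--Lions on the states (strong convergence of $\varphi_n$ in $C([0,T];V)$) to handle the nonlinear mobility, convolution and transport terms, and then invoke weak--strong uniqueness to conclude the limit is $\mathcal{S}(\bar{\textbf{U}})$. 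The paper avoids this entirely: it identifies $(\bar\varphi,\bar{\textbf{u}})=\mathcal{S}(\bar{\textbf{U}})$ by appealing to the continuous dependence estimates \eqref{eq10} and \eqref{eq3.1}, i.e.\ the Lipschitz continuity of $\mathcal{S}$ with respect to the $L^2(0,T;\mathbb{V}_{div}')$ norm of the control (the point being that the bound on $(\textbf{U}_n)_t$ in $L^2(0,T;\mathbb{V}_{div}')$ upgrades, via compactness of $\mathbb{G}_{div}\hookrightarrow \mathbb{V}_{div}'$, the weak convergence of the controls to strong convergence in that weaker norm, so $\mathcal{S}(\textbf{U}_n)\to\mathcal{S}(\bar{\textbf{U}})$ directly). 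Your route is more self-contained at this step but repeats compactness work; the paper's buys brevity from machinery already established in Section 3. For lower semicontinuity the paper simply cites convexity plus continuity of $\mathcal{J}$, whereas your term-by-term argument is equally valid and in fact yields genuine convergence (not just a liminf bound) of the $\varphi$-tracking and terminal terms. One small imprecision worth fixing: $F'$ does not actually appear in the weak formulation of the singular-potential system --- the chemical-potential contributions are rewritten through $m$, $\lambda$, $b$, $B$ as in \eqref{eq1.13}, and $\mu\nabla\varphi$ through $-\frac{\nabla a}{2}\varphi^2+(J*\varphi)\nabla\varphi$ modulo the pressure --- so your limit passage needs only $m,\lambda\in C([-1,1])$, and invoking ``continuity of $F'$ on $(-1,1)$'' is unnecessary (and would be delicate near $\pm 1$ for $F_{\log}$).
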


\begin{proof}
Let $l=\underset{\textbf{U}\in \mathcal{U}_{ad}}{\text{inf}}\mathcal{J}(\mathcal{S}(\textbf{U}),\textbf{U}).$ Then there exists a minimising sequence $\{\textbf{U}_n\} \subseteq \mathcal{U}_{ad},$ such that 
$$\underset{n\rightarrow\infty}{\text{lim}} \mathcal{J}(\mathcal{S}(\textbf{U}_n),\textbf{U}_n) =l. $$
Since $\textbf{U}_n \in \mathcal{U}_{ad} $,  $\lVert \textbf{U}_n\rVert$ is uniformly bounded in $L^2(0,T;\mathbb{G}_{div}).$  And up to a subsequence, $\textbf{U}_n \rightharpoonup \Bar{\textbf{U}}.$ for some $\Bar{\textbf{U}} \in L^2(0,T;\mathbb{G}_{div}).$ 
 Since $\mathcal{U}_{ad}$ is a closed convex subset of $\mathcal{U}$, $\mathcal{U}_{ad}$ is weakly sequentially closed  
and hence $\Bar{\textbf{U}} \in \mathcal{U}_{ad}.$ Let $\mathcal{S}(\textbf{U}_n)=(\varphi_n, \textbf{u}_n).$ Then we have from regularity of the strong solution $(\varphi_n, \textbf{u}_n)$,
\begin{align*}
    \lVert\varphi_n\rVert_{L^\infty(0,T;H^2)} &\leq C,\\
    \lVert\varphi_n'\rVert_{L^\infty(0,T;H) \cap L^2(0,T;V)} &\leq C,\\
    \lVert\textbf{u}_n\rVert_{L^\infty(0,T;\mathbb{V}_{div})} &\leq C,\\
    \lVert\textbf{u}_n\rVert_{L^2(0,T;\textbf{H}^2)} &\leq C.
\end{align*}
Hence, up to a subsequence we get $(\Bar{\varphi}, \Bar{\textbf{u}})$ such that 
\begin{align*}
    \varphi_n &\overset{\ast}{\rightharpoonup} \Bar{\varphi}  \text{ in  } L^\infty(0,T;H^2),\\
    \varphi_n' &\overset{\ast}{\rightharpoonup} \Bar{\varphi}' \text{  in  } L^\infty(0,T;H),\\
    \varphi'_n &\rightharpoonup \Bar{\varphi}'  \text{ in  } L^2(0,T;V),\\
    \textbf{u}_n &\overset{\ast}{\rightharpoonup} \Bar{\textbf{u}} \text{ in  } L^\infty(0,T;\mathbb{V}_{div}),\\
    \textbf{u}_n &\rightharpoonup \Bar{\textbf{u}} \text{ in  } L^2(0,T;\textbf{H}^2).
\end{align*}

From the above convergences and using the continuous dependence estimates \eqref{eq10} and \eqref{eq3.1} we get, $\mathcal{S}(\Bar{\textbf{U}})= (\Bar{\varphi}, \Bar{\textbf{u}}) \in \mathcal{V}.$ Since the functional $J$ is continuous and convex, we conclude that it is  weakly sequentially lower semi continuous \cite[Proposition 5, Chapter 1]{AUE}. Hence we can write,

$$l \leq \mathcal{J}(\Bar{\varphi},\Bar{\textbf{u}},\Bar{\textbf{U}}) \leq \lim \inf J(\varphi_n,\textbf{u}_n, \textbf{U}_n) = l.$$

Hence $\Bar{\textbf{U}}$ is the optimal control and corresponding optimal state is $\mathcal{S}(\Bar{\textbf{U}}) = (\Bar{\varphi},\Bar{\textbf{u}})$. 
\end{proof}

To derive the necessary optimality conditions, we need to study the differentiability properties of the control to state operator $\mathcal{S}$. Now we will prove $\mathcal{S}$ is Fr\'echet differentiable and the Fr\'echet derivative of $\mathcal{S}$ at $\Bar{\textbf{U}}$ is given by the solution to the linearized system  obtained by linearizing \eqref{eq1}-\eqref{eq07} around $(\Bar{\varphi},\Bar{\textbf{u}})=\mathcal{S}(\Bar{\textbf{U}})$. 

\subsection{Differentiability of the control to state operator}

\textbf{The Linearized system:}
Let $\Bar{\textbf{U}}$ be an optimal control and $(\Bar{\varphi},\Bar{\textbf{u}})$ be the corresponding strong solution to the system \eqref{eq1}-\eqref{eq07}. Let $\textbf{U} \in \mathcal{U}$ be given. Consider the following system in $(\psi, \textbf{w})$ that obtained by linearising the system \eqref{eq1}-\eqref{eq07} around the optimal state $(\Bar{\varphi},\Bar{\textbf{u}})$. 
\begin{align}
    \psi'+ \bar{\textbf{u}}\nabla\psi+ \textbf{w}\nabla\Bar{\varphi} &= \nabla\cdot(m(\Bar{\varphi})(\nabla(a\psi)-\nabla J*\psi)) + \nabla\cdot(m'(\Bar{\varphi})\psi(\nabla(a\Bar{\varphi})-\nabla J*\Bar{\varphi})) \nonumber\\ &\hspace{.5cm}+ \nabla\cdot(\lambda(\Bar{\varphi})\nabla\psi+ \lambda'(\Bar{\varphi})\psi\nabla\Bar{\varphi}),\label{eq3.2}\\
    -\nu\Delta\textbf{w} + \eta\textbf{w} + \nabla \pi &= 
    a\nabla(\Bar{\varphi}\psi) + (\nabla J*\Bar{\varphi})\psi + (\nabla J*\psi)\Bar{\varphi}+\nabla(F(\Bar{\varphi}+\psi)-F(\Bar{\varphi})) + \textbf{U} \label{eq3.3}\\
    \nabla\cdot \textbf{w} &= 0 \text{ in } Q,\label{eq3.4}\\
    \textbf{w} &= 0 \text{ on } \Sigma,\label{eq3.5}\\
    \Big(m(\Bar{\varphi})(\nabla(a\psi)-\nabla J*\psi)&+ m'(\Tilde{\varphi})\psi(\nabla(a\Bar{\varphi})-\nabla J*\Bar{\varphi}) + \lambda(\Bar{\varphi})\nabla\psi + \lambda'(\Tilde{\varphi})\psi\nabla\Bar{\varphi}\Big).n
     = 0 \text{ on } \Sigma,\label{eq3.6}\\
    \psi(x,0) &= \psi_0(x) \text{ in } \Omega.\label{eq3.7}
\end{align}
We will study the linearized system for well-posedness.
\begin{theorem}[\textbf{Existence of a weak solution to the linearized system}]
  Let all the hypotheses of Theorem 4.1 hold. Then for $\textbf{U}\in \mathcal{U}$, there exists a unique weak solution $(\psi, \textbf{w})$  to the system \eqref{eq3.2}-\eqref{eq3.7} such that 
  \begin{align}
      \psi &\in L^\infty(0,T;H) \cap L^2(0,T;V) \cap H^1(0,T;V'),\\
      \textbf{w} &\in L^2(0,T;\mathbb{V}_{div}).
  \end{align}
  \end{theorem}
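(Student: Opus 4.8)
The plan is to treat \eqref{eq3.2}--\eqref{eq3.7} as what it actually is: a \emph{linear} system in the unknowns $(\psi,\textbf{w})$ whose coefficients ($m(\Bar{\varphi})$, $m'(\Bar{\varphi})$, $\lambda(\Bar{\varphi})$, $\lambda'(\Bar{\varphi})$, $\Bar{\textbf{u}}$, $\nabla\Bar{\varphi}$, $a$, $J$) are frozen data determined by the fixed optimal state $(\Bar{\varphi},\Bar{\textbf{u}})=\mathcal{S}(\Bar{\textbf{U}})$, and to construct a solution by a Faedo--Galerkin scheme. A useful structural observation is that the velocity equation \eqref{eq3.3} carries no time derivative on $\textbf{w}$, so for each fixed $\psi$ it is a steady Brinkman problem; testing it against $\textbf{v}\in\mathbb{V}_{div}$ annihilates every pure gradient (in particular the a priori nonlinear term $\nabla(F(\Bar{\varphi}+\psi)-F(\Bar{\varphi}))$ and the gradient part of $a\nabla(\Bar{\varphi}\psi)$), since $(\nabla g,\textbf{v})=0$ for divergence-free $\textbf{v}$ vanishing on $\partial\Omega$; hence the weak velocity problem is genuinely linear in $\psi$. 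The bilinear form $\nu(\nabla\textbf{w},\nabla\textbf{v})+(\eta\textbf{w},\textbf{v})$ is coercive on $\mathbb{V}_{div}$ by \textbf{[N]} together with Korn's and Poincaré's inequalities, so Lax--Milgram yields, at almost every $t$, a unique $\textbf{w}$ depending linearly and boundedly on $\psi$, with $\lVert\textbf{w}\rVert_{\mathbb{V}_{div}}\le C(\lVert\psi\rVert+\lVert\textbf{U}\rVert_{\mathbb{V}_{div}'})$. Substituting this solution operator into \eqref{eq3.2} reduces the problem to a single linear parabolic equation for $\psi$.

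First I would project \eqref{eq3.2} onto the span of the first $n$ eigenfunctions of the Neumann Laplacian, obtaining a linear ODE system for the Galerkin coefficients with time-dependent, integrable coefficients, solvable on $[0,T]$ by Carathéodory theory. Then I would derive the a priori estimates. Testing the projected equation with $\psi_n$, the principal diffusion term produces $\big((m(\Bar{\varphi})a+\lambda(\Bar{\varphi}))\nabla\psi_n,\nabla\psi_n\big)$, and since $m(\Bar{\varphi})a+\lambda(\Bar{\varphi})=m(\Bar{\varphi})\big(a+F''(\Bar{\varphi})\big)\ge\alpha_1$ by \textbf{[A4]}, this is coercive and controls $\alpha_1\lVert\nabla\psi_n\rVert^2$. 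The transport term $(\Bar{\textbf{u}}\nabla\psi_n,\psi_n)$ vanishes because $\Bar{\textbf{u}}$ is divergence-free and vanishes on $\partial\Omega$, while the remaining lower-order and nonlocal terms are bounded by Hölder, Gagliardo--Nirenberg and Young inequalities using the strong-solution regularity of $(\Bar{\varphi},\Bar{\textbf{u}})$ from Theorem 3.1 ($\Bar{\varphi}\in L^\infty(0,T;H^2)\cap L^\infty(Q)$, $\Bar{\textbf{u}}\in L^\infty$), and the coupling term $(\textbf{w}_n\nabla\Bar{\varphi},\psi_n)$ is absorbed via the Lax--Milgram bound on $\textbf{w}_n$. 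Gronwall's inequality then gives bounds for $\psi_n$ in $L^\infty(0,T;H)\cap L^2(0,T;V)$ uniform in $n$; the velocity bound yields $\textbf{w}_n$ bounded in $L^2(0,T;\mathbb{V}_{div})$, and comparison in \eqref{eq3.2} gives $\psi_n'$ bounded in $L^2(0,T;V')$.

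With these uniform bounds I would extract a weakly (respectively weakly-$*$) convergent subsequence. Because the system is linear with frozen coefficients, weak convergence suffices to pass to the limit in every term and identify $(\psi,\textbf{w})$ as a weak solution of \eqref{eq3.2}--\eqref{eq3.7} with the claimed regularity; the Aubin--Lions lemma (with $V\hookrightarrow H\hookrightarrow V'$) provides strong $L^2(0,T;H)$ convergence, and the embedding $L^2(0,T;V)\cap H^1(0,T;V')\hookrightarrow C([0,T];H)$ makes the initial condition \eqref{eq3.7} meaningful and recovers it. Uniqueness is immediate from linearity: the difference of two solutions sharing the same $(\textbf{U},\psi_0)$ solves the homogeneous system, and the same energy estimate—coercivity from \textbf{[A4]}, the velocity bound with $\textbf{U}=0$, and Gronwall—forces it to vanish.

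I expect the main obstacle to be the a priori estimate rather than the passage to the limit: one must close the energy inequality for $\psi$ and the elliptic estimate for $\textbf{w}$ simultaneously, controlling the coupling term $(\textbf{w}\nabla\Bar{\varphi},\psi)$ and the nonlocal convolution terms $\nabla J*\psi$ without eroding the coercivity margin $\alpha_1$ supplied by \textbf{[A4]}. This is precisely where the high regularity of the frozen state $(\Bar{\varphi},\Bar{\textbf{u}})$—guaranteed by Theorem 3.1 and its uniform estimates—is indispensable, since it lets every coefficient-dependent term be estimated by $\lVert\psi\rVert_{V}$ times Gronwall-integrable functions of time.
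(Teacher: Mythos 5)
Your proposal is correct and follows essentially the same route as the paper: a Faedo--Galerkin scheme in spectral bases, elimination of the velocity in favour of $\psi$ (your Lax--Milgram solution operator is the continuous analogue of the paper's step expressing $b_i^n(t)$ in terms of $a_i^n(t)$ before invoking Carath\'eodory), coercivity from \textbf{[A4]}, simultaneous absorption of the coupling term into the velocity estimate, Gronwall, weak compactness with Aubin--Lions, and uniqueness by the same energy argument applied to the homogeneous system. Your explicit remark that the gradient term $\nabla\big(F(\Bar{\varphi}+\psi)-F(\Bar{\varphi})\big)$ is annihilated by divergence-free test functions --- so the weak problem is genuinely linear --- is done silently in the paper's weak formulation and is a worthwhile clarification, not a different method.
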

  \begin{proof}
  Proof is given by Galerkin approximation method. Consider an orthonormal family, $\{\eta_i\} \subseteq V$ of eigenvectors of the Neumann operator $-\Delta+I,$ and a family, $\{\bm{\nu}_i\} \subseteq \mathbb{V}_{div}$ of eigenvectors of the Stokes operator. Let $\Psi_n:=<\eta_1,...,\eta_n>$ and $\mathbb{V}_n:=<\bm{\nu}_1,...,\bm{\nu}_n>.$ $P_n$ and $\mathbb{P}_n$ represents the orthogonal projections from $V, \mathbb{V}_{div}$ onto the spaces $\Psi_n$ and $\mathbb{V}_n$ respectively. Looking for
  $\psi_n=\sum_{i=1}^{n}a_i^n(t)\eta_i,$ $w_n=\sum_{i=1}^{n}b_i^n(t)\bm{\nu}_i$ that solves the finite dimensional problem,

\begin{align}
    \langle\psi_n',\eta_i\rangle+ (\bar{\textbf{u}}\nabla\psi_n,\eta_i)+ (\textbf{w}_n\nabla\Bar{\varphi},\eta_i) &= -(m(\Bar{\varphi})(\nabla(a\psi_n)-\nabla J*\psi_n),\nabla\eta_i) - (m'(\Bar{\varphi})\psi_n(\nabla(a\Bar{\varphi})-\nabla J*\Bar{\varphi}),\nabla\eta_i) \nonumber\\ &\hspace{.5cm} -(\lambda(\Bar{\varphi})\nabla\psi_n,\nabla\eta_i) - (\lambda'(\Bar{\varphi})\psi_n\nabla\Bar{\varphi},\nabla\eta_i),\label{eq3.100}\\
    (\nu\nabla\textbf{w}_n,\nabla\bm{\nu}_i) + (\eta\textbf{w}_n,\bm{\nu}_i) &= 
    -(\nabla a(\Bar{\varphi}\psi_n),\bm{\nu}_i) + ((\nabla J*\Bar{\varphi})\psi_n,\bm{\nu}_i) + ((\nabla J*\psi_n)\Bar{\varphi}, \bm{\nu}_i) + \langle\textbf{U},\bm{\nu}_i\rangle. \label{eq3.110}
\end{align}

where $i=1,2,...,n$ and for a.e $t\in [0,T].$ Using the expression for $\psi_n$ and $\textbf{w}_n$ the above system reduces to a system of $n$ differential equations in $2n$ variables $a_i^n(t)$ and $b_i^n(t)$. Using \eqref{eq3.110}, we can express $b_i^n(t)$ in terms of $a_i^n(t)$ thereby obtaining a system of O.D.E in $n$ variables, $a_i^n$. By applying Carath\'eodory theorem \cite[Theorem 1.1, Chapter 2]{CDT} we will get a unique solution $a_i^n(t)$. Thus we get a solution of \eqref{eq3.100}-\eqref{eq3.110} namely $(\psi_n, \textbf{w}_n)$ in a maximal interval $[0,T^*].$
Multiplying \eqref{eq3.100} with $a_i^n(t)$ and taking summation over $i$ varying from 1 to $n$, we get,
\begin{align}
    \frac{1}{2}\frac{d}{dt}\lVert\psi_n\rVert^2 +((m(\Bar{\varphi})a+\lambda(\Bar{\varphi}))\nabla\psi_n,\nabla\psi_n) &= -(\textbf{w}_n\nabla\Bar{\varphi},\psi_n)-(m(\Bar{\varphi})((\nabla a)\psi_n-\nabla J*\psi_n),\nabla\psi_n) \nonumber\\ &\hspace{.5cm} - (m'(\Bar{\varphi})\psi_n(\nabla(a\Bar{\varphi})-\nabla J*\Bar{\varphi}),\nabla\psi_n) - (\lambda'(\Bar{\varphi})\psi_n\nabla\Bar{\varphi},\nabla\psi_n).
\end{align}
Let $I_1, I_2$ be the terms on L.H.S. and $I_3,...,I_6$ be terms on the R.H.S. Then by applying H\"olders, Ladyzhenskaya, Gagliardo-Nirenberg and Young's inequalities we obtain, 
\begin{align*}
    |I_2| &\geq \alpha_1\lVert\nabla\psi_n\rVert^2,\\
    |I_3| &\leq \lVert\textbf{w}_n\rVert_{L^4}\lVert\nabla\Bar{\varphi}\rVert_{L^4}\lVert\psi_n\rVert \leq \frac{\nu}{4}\lVert\nabla\textbf{w}_n\rVert^2+ c\lVert\nabla\Bar{\varphi}\rVert_{L^4}^2\lVert\psi_n\rVert^2,\\
    |I_4| &\leq |m|_{L^\infty}(\lVert\nabla a\rVert_{L^\infty} + \lVert\nabla J\rVert_{L^1})\lVert\psi_n\rVert\lVert\nabla\psi_n\rVert \leq \frac{\alpha_1}{12}\lVert\nabla\psi_n\rVert^2+ c\lVert\psi_n\rVert^2,\\
    |I_5| &\leq |m'|_{L^\infty} (\lVert\nabla a\rVert_{L^\infty} + \lVert\nabla J\rVert_{L^1})\lVert\Bar{\varphi}\rVert_{L^\infty}\lVert\psi_n\rVert\lVert\nabla\psi_n\rVert+ |m'|_{L^\infty}\lVert a\rVert_{L^\infty}\lVert\nabla\Bar{\varphi}\rVert_{L^4}\lVert\psi_n\rVert_{L^4}\lVert\nabla\psi_n\rVert\\ &\leq \frac{\alpha_1}{8}\lVert\nabla\psi_n\rVert^2 + c\lVert\psi_n\rVert^2 + c \lVert\nabla\Bar{\varphi}\rVert_{L^4}^2\lVert\psi_n\rVert\lVert\nabla\psi_n\rVert \leq \frac{\alpha_1}{4}\lVert\nabla\psi_n\rVert^2 + c(1+ \lVert\nabla\Bar{\varphi}\rVert_{L^4}^4)\lVert\psi_n\rVert^2,\\
    |I_6| &\leq |\lambda'|_{L^\infty}\lVert\nabla\Bar{\varphi}\rVert_{L^4}\lVert\psi_n\rVert_{L^4}\lVert\nabla\psi_n\rVert \leq \frac{\alpha_1}{12}\lVert\nabla\psi_n\rVert^2 + c\lVert\nabla\Bar{\varphi}\rVert_{L^4}^2\lVert\nabla\psi_n\rVert\lVert\psi_n\rVert
    \leq \frac{\alpha_1}{6}\lVert\nabla\psi_n\rVert^2 + c\lVert\nabla\Bar{\varphi}\rVert_{L^4}^4\lVert\psi_n\rVert^2.
\end{align*}
Combining all the above estimates we get, 
\begin{align}\label{eq3.013}
    \frac{1}{2}\frac{d}{dt}\lVert\psi_n\rVert^2 + \frac{\alpha_1}{2}\lVert\nabla\psi_n\rVert^2 &\leq  \frac{\nu}{4}\lVert\nabla\textbf{w}_n\rVert^2 + C(1+ \lVert\nabla\Bar{\varphi}\rVert_{L^4}^2 +\lVert\nabla\Bar{\varphi}\rVert_{L^4}^4)\lVert\psi_n\rVert^2. 
\end{align}
Multiplying \eqref{eq3.110} with $b_i^n(t)$ and taking summation over $i$ we get,
\begin{align}\label{eq3.014}
   \nu\lVert\nabla\textbf{w}_n\rVert^2 + \eta\lVert\textbf{w}_n\rVert^2 &= 
    -(\nabla a(\Bar{\varphi}\psi_n),\textbf{w}_n) + ((\nabla J*\Bar{\varphi})\psi_n,\textbf{w}_n) + ((\nabla J*\psi_n)\Bar{\varphi}, \textbf{w}_n) + \langle\textbf{U},\textbf{w}_n\rangle. 
\end{align}
 Let $J_1,...,J_4$ denote the terms on R.H.S of \eqref{eq3.014}.
\begin{align*}
     |J_1| &\leq \lVert\nabla a \rVert_{L^\infty}\lVert\Bar{\varphi}\rVert_{L^\infty}\lVert\psi_n\rVert\lVert\textbf{w}_n\rVert \leq \frac{\eta}{6}\lVert\textbf{w}_n\rVert^2 + c\lVert\psi_n\rVert^2, \\
     |J_2| &\leq \lVert\nabla J \rVert_{L^1}\lVert\Bar{\varphi}\rVert_{L^\infty}\lVert\psi_n\rVert\lVert\textbf{w}_n\rVert \leq \frac{\eta}{6}\lVert\textbf{w}_n\rVert^2 + c\lVert\psi_n\rVert^2 ,\\
     |J_3| &\leq \lVert\nabla J \rVert_{L^1}\lVert\Bar{\varphi}\rVert_{L^\infty}\lVert\psi_n\rVert\lVert\textbf{w}_n\rVert \leq \frac{\eta}{6}\lVert\textbf{w}_n\rVert^2 + c\lVert\psi_n\rVert^2,\\
     |J_4| &\leq \lVert \textbf{U}\rVert_{\mathbb{V}_{div}'}\lVert\textbf{w}_n\rVert_{\mathbb{V}_{div}} \leq \frac{\nu}{4}\lVert\nabla\textbf{w}_n\rVert^2 + c\lVert \textbf{U}\rVert_{\mathbb{V}_{div}'}^2.
 \end{align*}
Therefore,
\begin{align}\label{eq3.015}
   \nu\lVert\nabla\textbf{w}_n\rVert^2 + \eta\lVert\textbf{w}_n\rVert^2 &\leq \frac{\nu}{4}\lVert\nabla\textbf{w}_n\rVert^2 + \frac{\eta}{2}\lVert\textbf{w}_n\rVert^2 + C(\lVert\psi_n\rVert^2 + \lVert \textbf{U}\rVert_{\mathbb{V}_{div}'}^2).
 \end{align}
Adding \eqref{eq3.013} and \eqref{eq3.015}, we obtain,
\begin{align}\label{eq3.016}
    \frac{1}{2}\frac{d}{dt}\lVert\psi_n\rVert^2 + \frac{\alpha_1}{2}\lVert\nabla\psi_n\rVert^2 +\frac{\nu}{2}\lVert\nabla\textbf{w}_n\rVert^2 + \frac{\eta}{2}\lVert\textbf{w}_n\rVert^2 &\leq C(1+ \lVert\nabla\Bar{\varphi}\rVert_{L^4}^2 +\lVert\nabla\Bar{\varphi}\rVert_{L^4}^4)\lVert\psi_n\rVert^2  + C\lVert \textbf{U}\rVert_{\mathbb{V}_{div}'}^2.
\end{align}
After applying Gronwall's inequality we get, 
\begin{align}\label{eq3.017}
    \frac{1}{2}\lVert\psi_n\rVert^2_{L^\infty(0,T;H)} + \frac{\alpha_1}{2}\lVert\psi_n\rVert_{L^2(0,T,V)}^2 + k_0\lVert\textbf{w}_n\rVert_{L^2(0,T:\mathbb{V}_{div})}^2 &\leq C.
\end{align}
In \eqref{eq3.100} considering test function $\xi\in V,$ we have,
\begin{align}
    <\psi_n',\xi> &= (\bar{\textbf{u}}\psi_n,\nabla\xi)+ (\textbf{w}_n\Bar{\varphi},\nabla\xi) -(m(\Bar{\varphi})(\nabla(a\psi_n)-\nabla J*\psi_n),\nabla\xi) - (m'(\Bar{\varphi})\psi_n(\nabla(a\Bar{\varphi})-\nabla J*\Bar{\varphi}),\nabla\xi) \nonumber\\ &\hspace{.5cm} -(\lambda(\Bar{\varphi})\nabla\psi_n,\nabla\xi) - (\lambda'(\Bar{\varphi})\psi_n\nabla\Bar{\varphi},\nabla\xi),\label{eq3.018}
\end{align}
We denote the terms on the R.H.S of \eqref{eq3.018} by $L_1,L_2,...,L_6.$ Estimating each of its terms, we get,
\begin{align*}
    |L_1| &\leq \lVert\bar{\textbf{u}}\rVert_{L^\infty}\lVert\psi_n\rVert\lVert\nabla\xi\rVert  \leq c\lVert\psi_n\rVert\lVert\xi\rVert_{V},\\
    |L_2| &\leq \lVert\Bar{\varphi}\rVert_{L^\infty}\lVert\textbf{w}_n\rVert\lVert\nabla\xi\rVert  \leq 
    c \lVert\textbf{w}_n\rVert\lVert\xi\rVert_{V},\\
    |L_3| &\leq |m|_{L^\infty}(\lVert\nabla a\rVert_{L^\infty} +\lVert\nabla J\rVert_{L^1})\lVert\psi_n\rVert\lVert\nabla\xi\rVert + |m|_{L^\infty}\lVert a\rVert_{L^\infty}\lVert\nabla\psi_n\rVert\lVert\nabla\xi\rVert \leq c\lVert\psi_n\rVert_{V}\lVert\xi\rVert_{V},\\
    |L_4| &\leq |m'|_{L^\infty}(\lVert\nabla a\rVert_{L^\infty} +\lVert\nabla J\rVert_{L^1})\lVert\Bar{\varphi}\rVert_{L^\infty}\lVert\psi_n\rVert\lVert\nabla\xi\rVert + |m'|_{L^\infty}\lVert a\rVert_{L^\infty} \lVert\nabla\Bar{\varphi}\rVert_{L^4}\lVert\psi_n\rVert_{L^4}\lVert\nabla\xi\rVert \\ &\leq c\lVert\psi_n\rVert_{V}\lVert\xi\rVert_{V},\\
    |L_5| &\leq \lVert\lambda\rVert_{L^\infty}\lVert\nabla\psi_n\rVert\lVert\nabla\xi\rVert \leq c\lVert\psi_n\rVert_{V}\lVert\xi\rVert_{V},\\
    |L_6| &\leq |\lambda'|_{L^\infty}\lVert\nabla\Bar{\varphi}\rVert_{L^4}\lVert\psi_n\rVert_{L^4}\lVert\nabla\xi\rVert \leq c\lVert\psi_n\rVert_{V}\lVert\xi\rVert_{V}.
\end{align*}
Substituting the estimates,we get  
\begin{align}
    |\langle\psi_n',\xi \rangle |  &\leq C\big(\lVert\textbf{w}_n\rVert+ \lVert\psi_n\rVert_{V}\big)\lVert\xi\rVert_{V}, \nonumber\\
  \text{hence,}  \ \|\psi_n'\|_{V'} &\leq C\big(\lVert\textbf{w}_n\rVert+ \lVert\psi_n\rVert_{V}\big) .\label{eq4.19}
\end{align}
Integrating from 0 to $T$ we can conclude that $\lVert\psi_n'\rVert_{L^2(0,T;V')} \leq C.$ From all the derived uniform estimates, we can extract  subsequences such that
\begin{align}
    \psi_n &\overset{\ast}{\rightharpoonup} \psi  \text{  in  } L^\infty(0,T;H),\label{eq4.20}\\
    \psi_n &\rightharpoonup \psi \text{ in  } L^2(0,T;V),\label{eq4.21}\\
    \psi'_n &\rightharpoonup \psi'  \text{ in  } L^2(0,T;V'),\label{eq4.22}\\
    \textbf{w}_n &\rightharpoonup \textbf{w} \text{ in  } L^2(0,T;\mathbb{V}_{div}).\label{eq4.23}
\end{align}
Using Aubin-Lions lemma and \eqref{eq4.21} and \eqref{eq4.22} we can conclude that $\psi \in L^2(0,T;H)$ and $\psi_n \rightarrow \psi$ strongly for a.e $x\in \Omega, t\in [0,T]$. Moreover, from \eqref{eq4.20} and \eqref{eq4.22} we get, $\psi\in C([0,T];H)$. By passing to the limit as $ n \rightarrow \infty$ we arrive at a weak solution $(\psi, \textbf{w})$  of the linearized system. \eqref{eq3.2}-\eqref{eq3.7}. 
\\To prove the uniqueness of the solution, let $(\psi_1,\textbf{w}_1)$, $(\psi_2,\textbf{w}_2)$ be two solutions to the linearized system \eqref{eq3.2}-\eqref{eq3.7} corresponding to initial data $\psi_{10}$ and $\psi_{20}$ respectively. Let $\psi=\psi_1-\psi_2$, $\textbf{w}= \textbf{w}_1-\textbf{w}_2$ and $\psi_0 = \psi_{10}-\psi_{20}$. Then $(\psi,\textbf{w})$ solves,
\begin{align}
    \psi'+ \bar{\textbf{u}}\nabla\psi+ \textbf{w}\nabla\Bar{\varphi} &= \nabla\cdot(m(\Bar{\varphi})(\nabla(a\psi)-\nabla J*\psi)) + \nabla\cdot(m'(\Bar{\varphi})\psi(\nabla(a\Bar{\varphi})-\nabla J*\Bar{\varphi})) \nonumber\\ &\hspace{.5cm}+ \nabla\cdot(\lambda(\Bar{\varphi})\nabla\psi+ \lambda'(\Bar{\varphi})\psi\nabla\Bar{\varphi}),\label{eq3.019}\\
    -\nu\Delta\textbf{w} + \eta\textbf{w} + \nabla \pi &= 
    a\nabla(\Bar{\varphi}\psi) + (\nabla J*\Bar{\varphi})\psi + (\nabla J*\psi)\Bar{\varphi}+\nabla(F(\Bar{\varphi}+\psi)-F(\Bar{\varphi})),\label{eq3.020} \\
    \nabla\cdot \textbf{w} &= 0 \text{ in } Q,\\
    \textbf{w} &= 0 \text{ on } \Sigma,\\
    \big[m(\Bar{\varphi})(\nabla(a\psi)-\nabla J*\psi)&+ m'(\Bar{\varphi})\psi(\nabla(a\Bar{\varphi})-\nabla J*\Bar{\varphi}) + \lambda(\Bar{\varphi})\nabla\psi + \lambda'(\Bar{\varphi})\psi\nabla\Bar{\varphi}\big].\textbf{n}
     = \psi_0 \text{ on } \Sigma,\\
    \psi(x,0) &= 0 \text{ in } \Omega.
\end{align}
Testing \eqref{eq3.019} with $\psi$ and \eqref{eq3.020} with $\textbf{w}$, and adding both equations and estimating each  term like earlier, we get, 
\begin{align}
    \frac{1}{2}\frac{d}{dt}\lVert\psi\rVert^2 + \frac{\alpha_1}{2}\lVert\nabla\psi\rVert^2 +\frac{\nu}{2}\lVert\nabla\textbf{w}\rVert^2 + \frac{\eta}{2}\lVert\textbf{w}\rVert^2 &\leq \mathcal{C}_1(1+ \lVert\nabla\Bar{\varphi}\rVert_{L^4}^2 +\lVert\nabla\Bar{\varphi}\rVert_{L^4}^4)\lVert\psi\rVert^2. 
\end{align}
After applying Gronwall's inequality, we get,
\begin{align}
    \|\psi(t)\|^2 + \alpha_1\|\nabla\psi\|_{L^2(0,T;H)}^2 + \nu\|\nabla\textbf{w}\|_{L^2(0,T;H)}^2 + \eta\|\textbf{w}\|_{L^2(0,T;H)}^2 \leq C\lVert{\psi_0}\rVert^2
\end{align}
Further using similar estimate as in \eqref{eq4.19}, we get $\|\psi'\|_{L^2(0,T;V')}^2 \leq C\lVert{\psi_0}\rVert^2$. Hence the uniqueness of the weak solution. 
 \end{proof}

\begin{theorem}[\textbf{Differentiability of the control to state operator}]
Assume that all the hypothesis of theorem 4.1 holds. Then the control to state operator $\mathcal{S}: \mathcal{U} \rightarrow \mathcal{V}$ is Fr\'echet differentiable when viewed as a mapping from $\mathcal{U} \rightarrow \mathcal{Z}$ where  $\mathcal{Z}:= L^\infty(0,T;H) \cap L^2(0,T;V) \cap H^1(0,T;V') \times L^2(0,T;\mathbb{V}_{div}),$ the weak solution space. And the Fr\'echet derivative $\mathcal{S}'$ at $\Bar{\textbf{U}}$ in the direction of $\textbf{U}\in \mathcal{U}$ is given by, 
$$\mathcal{S}'(\Bar{\textbf{U}})(\textbf{U}) = (\psi, \textbf{w}).$$ 
Where $(\psi, \textbf{w})$ is the unique weak solution to the linearized system \eqref{eq3.2}-\eqref{eq3.7}, which is linearization of \eqref{eq1}-\eqref{eq07} around the state $(\Bar{\varphi},\Bar{\textbf{u}}) =\mathcal{S}(\Bar{\textbf{U}})$ and with a control $\textbf{U}$.
\end{theorem}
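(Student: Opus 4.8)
The plan is to verify the definition of Fr\'echet differentiability directly, by showing that the linearized solution $(\psi,\textbf{w})$ captures the increment of $\mathcal{S}$ to first order. Fix the control $\bar{\textbf{U}}$ with associated strong state $(\bar\varphi,\bar{\textbf{u}})=\mathcal{S}(\bar{\textbf{U}})$, and for $\textbf{U}\in\mathcal{U}$ with $\|\textbf{U}\|_{\mathcal{U}}$ small set $(\varphi^{\textbf{U}},\textbf{u}^{\textbf{U}}):=\mathcal{S}(\bar{\textbf{U}}+\textbf{U})$, which is well defined and strong by Theorem 3.1. Write the increments $\varphi^d:=\varphi^{\textbf{U}}-\bar\varphi$ and $\textbf{u}^d:=\textbf{u}^{\textbf{U}}-\bar{\textbf{u}}$, and let $(\psi,\textbf{w})$ be the unique weak solution of the linearized system \eqref{eq3.2}-\eqref{eq3.7} furnished by Theorem 4.2 (with control $\textbf{U}$ and $\psi_0=0$). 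The quantity to control is the remainder $(\rho,\textbf{r}):=(\varphi^d-\psi,\ \textbf{u}^d-\textbf{w})$, and the goal is $\|(\rho,\textbf{r})\|_{\mathcal{Z}}=o(\|\textbf{U}\|_{\mathcal{U}})$; in fact I expect the stronger quadratic estimate $\|(\rho,\textbf{r})\|_{\mathcal{Z}}\le C\|\textbf{U}\|_{\mathcal{U}}^2$, which gives differentiability immediately, while linearity and boundedness of $\textbf{U}\mapsto(\psi,\textbf{w})$ are already contained in the estimates of Theorem 4.2.

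First I would write the system governing $(\rho,\textbf{r})$, obtained by subtracting the linearized equations from the equations satisfied by $(\varphi^d,\textbf{u}^d)$, the latter being of the form \eqref{eq3.052}-\eqref{eq3.053} with $\varphi_1=\varphi^{\textbf{U}}$ and $\varphi_2=\bar\varphi$. Grouping terms, the right-hand side splits into (i) contributions linear in $(\rho,\textbf{r})$, to be absorbed by Gronwall's inequality, and (ii) genuinely nonlinear contributions, which are exactly the second-order Taylor remainders of $m,\lambda,b,B$ and of the convolution structure, e.g.\ $m(\varphi^{\textbf{U}})-m(\bar\varphi)-m'(\bar\varphi)\varphi^d=\tfrac12 m''(\xi)(\varphi^d)^2$ and the analogous expressions for $\lambda$ and $b$. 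Since $m,\lambda\in C^2[-1,1]$ and the strong states satisfy $|\bar\varphi|,|\varphi^{\textbf{U}}|\le1$ together with the uniform $L^\infty(0,T;H^2)$ bounds from the regularity \eqref{eq2.08}-\eqref{eq2.010}, these remainders are pointwise $O((\varphi^d)^2)$ with controlled coefficients. I note that the potential contribution $\nabla\big(F(\bar\varphi+\psi)-F(\bar\varphi)\big)$ and all pressure terms are pure gradients, hence vanish upon testing the velocity equation with $\textbf{v}\in\mathbb{V}_{div}$, so the singularity of $F$ never enters these estimates.

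Next I would run the energy estimate for the remainder system in parallel with the well-posedness computation of Theorem 4.2: test the $\rho$-equation with a $\widetilde{B}$-type multiplier (or simply with $\rho$) and the $\textbf{r}$-equation with $\textbf{r}$, invoking \textbf{[A4]} for the coercive lower bound on $(m(\bar\varphi)a+\lambda(\bar\varphi))$ and Korn's inequality for the velocity. The linear terms reproduce the Gronwall structure of \eqref{eq3.016}, while the quadratic terms are estimated by H\"older, Gagliardo--Nirenberg and Young's inequalities and then bounded through the continuous dependence estimate \eqref{eq3.1}, which yields
\begin{equation*}
\|\varphi^d\|_{L^\infty(0,T;V)\cap L^2(0,T;H^2)}+\|(\varphi^d)'\|_{L^2(0,T;H)}+\|\textbf{u}^d\|_{L^2(0,T;\mathbb{V}_{div})}\le C\|\textbf{U}\|_{\mathcal{U}}.
\end{equation*}
Consequently the nonlinear forcing is $O(\|\textbf{U}\|_{\mathcal{U}}^2)$ in the relevant norms, and Gronwall's inequality gives $\|(\rho,\textbf{r})\|_{L^\infty(0,T;H)\cap L^2(0,T;V)}+\|\textbf{r}\|_{L^2(0,T;\mathbb{V}_{div})}\le C\|\textbf{U}\|_{\mathcal{U}}^2$. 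A duality/test-function argument for $\rho'$, identical to the one leading to \eqref{eq4.19}, then upgrades the control to the full $\mathcal{Z}$-norm including the $H^1(0,T;V')$ component, establishing $\mathcal{S}'(\bar{\textbf{U}})(\textbf{U})=(\psi,\textbf{w})$.

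The main obstacle I anticipate is the bookkeeping for the nonlinear remainder terms: one must simultaneously exploit the quadratic smallness coming from \eqref{eq3.1} and the higher spatial regularity of the strong states ($\bar\varphi,\varphi^{\textbf{U}}\in L^\infty(0,T;H^2)$ with $(\varphi^d)'\in L^2(0,T;H)$), so that every product of an increment with a derivative of $m,\lambda,b$ lands in a space where Gagliardo--Nirenberg and the embedding $H^2\hookrightarrow L^\infty$ (valid in $d=2$) close the estimate \emph{without} losing a power of $\|\textbf{U}\|_{\mathcal{U}}$. This is precisely where the $C^2$ hypothesis on $m$ and $\lambda$ and the higher-order stability estimate of Theorem 3.3 are indispensable.
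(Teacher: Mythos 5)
Your proposal is correct and follows essentially the same route as the paper's own proof: you form the remainder $(\rho,\textbf{r})=(\varphi^{\textbf{U}}-\bar\varphi-\psi,\ \textbf{u}^{\textbf{U}}-\bar{\textbf{u}}-\textbf{w})$, derive its system with the quadratic Taylor remainders of $m$ and $\lambda$ (noting, as the paper does, that the $\nabla F$ and pressure terms drop when testing against divergence-free fields), run the energy/Gronwall estimate with \textbf{[A4]} and Korn's inequality, and bound the quadratic forcing via the stability estimate \eqref{eq3.1} to obtain $\lVert(\rho,\textbf{r})\rVert_{\mathcal{Z}}\leq C\lVert\textbf{U}\rVert_{\mathcal{U}}^2$, finishing with the same duality argument for $\rho'$ in $L^2(0,T;V')$. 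This matches the paper's proof step for step, including the anticipated quadratic (rather than merely $o(\lVert\textbf{U}\rVert_{\mathcal{U}})$) bound.
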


\begin{proof}
Consider $\mathcal{S}(\Bar{\textbf{U}})=(\Bar{\varphi},\Bar{\textbf{u}})$ and $\mathcal{S}(\Bar{\textbf{U}}+\textbf{h})=(\varphi^\textbf{h},\textbf{u}^\textbf{h})$. Let $(\psi^\textbf{h},\textbf{w}^\textbf{h})$ be the solution to the linearized system \eqref{eq3.2}-\eqref{eq3.7} with  ${\textbf{U}} = \textbf{h}$. 
Define, 
\begin{align*}
    \rho^\textbf{h} &=\varphi^\textbf{h}-\Bar{\varphi}-\psi^\textbf{h} \\
    \textbf{v}^\textbf{h} &=\textbf{u}^\textbf{h}-\Bar{\textbf{u}}-\textbf{w}^\textbf{h} 
\end{align*}
We denote $\xi = \varphi^\textbf{h}-\Bar{\varphi}$, $\bm{\tau} = \textbf{u}^\textbf{h}-\Bar{\textbf{u}}$.  Note that, $(\xi, \bm{\tau})$ satisfies \eqref{eq3.052}-\eqref{eq3.053} and using the estimate \eqref{eq3.1}, we have $\xi \in  L^2(0,T;H^2) \cap L^\infty(0,T;V)$, $\xi' \in L^2(0,T;H)$ and $\bm{\tau} \in L^2(0,T; \mathbb{V}_{div})$.
\\We use $(\rho, \textbf{v})$ and $(\psi, \textbf{w})$ instead of $(\rho^\textbf{h}, \textbf{v}^\textbf{h})$ and $(\psi^\textbf{h}, \textbf{w}^\textbf{h})$
for the ease of notation. Therefore $(\rho, \textbf{v})$ solves the following system.
\begin{align}
    \rho'+\bm{\tau}\nabla\xi + \Bar{u}\nabla\rho + \textbf{v}\nabla\Bar{\varphi} &= \nabla\cdot\big( m(\Bar{\varphi})(\nabla(a\rho)-\nabla J*\rho)+m'(\Bar{\varphi})\rho(\nabla(a\Bar{\varphi})-\nabla J*\Bar{\varphi})\big) + \nabla\cdot(\lambda'(\Bar{\varphi})\xi\nabla\xi)\nonumber\\
    &\hspace{.25cm}+ \nabla\cdot\big( m'(\Bar{\varphi})\xi(\nabla(a\xi)-\nabla J*\xi)\big) + \nabla\cdot\big(\lambda(\Bar{\varphi})\nabla\rho+ \lambda'(\Bar{\varphi})\nabla\Bar{\varphi}\rho\big),\label{eq5.026}\\
    -\nabla\cdot(\nu\nabla \textbf{v})+ \eta \textbf{v} +\nabla \pi &= a\nabla(\rho\Bar{\varphi})+a\xi\nabla\xi+ (\nabla J*\Bar{\varphi})\rho+ (\nabla J*\rho)\Bar{\varphi}+ (\nabla J*\xi)\xi,\label{eq5.027}
\end{align}
\begin{align}
    \nabla\cdot\textbf{v} &= 0 \hspace{.5cm}\text{ in } Q,\label{eq5.028}\\
    \textbf{v} &= 0 \hspace{.5cm}\text{ on } \Sigma,\label{eq5.029}\\
     \big[m(\Bar{\varphi})\big(\nabla (a\rho)-\nabla J*\rho\big) + (m(\varphi^\textbf{h})&-m(\Bar{\varphi}))\big( \nabla (a\xi)-\nabla J*\xi\big) + \big( m(\varphi^\textbf{h})-m(\Bar{\varphi})-m'(\Tilde{\varphi})\psi\big)\big(\nabla (a\Bar{\varphi})-\nabla J*\Bar{\varphi}\big)\nonumber\\
    +\lambda(\Bar{\varphi})\nabla\rho +\big(\lambda(\varphi^\textbf{h})-\lambda(\Bar{\varphi})\big)\nabla\xi &+ \big(\lambda(\varphi^\textbf{h})-\lambda(\Bar{\varphi})-\lambda'(\Tilde{\varphi})\psi\big)\nabla\Bar{\varphi}\big]\cdot\textbf{n} =0 \hspace{.5cm} \text{  on } \Sigma,\label{eq5.030}\\
    \rho(x,0) &= 0 \text{ in } \Omega.\label{eq5.031}
\end{align}

Our aim is to prove the following limit in order to get the differentiability of the operator $\mathcal{S}$. In particular we want to show that
\begin{equation}\label{eq4.39}
\frac{\lVert S(\Bar{\textbf{U}}+\textbf{h})-S(\Bar{\textbf{U}})-(\psi,\textbf{w})\rVert_{\mathcal{Z}}}{\lVert \textbf{h}\rVert_\mathcal{U}}  = \frac{\lVert(\rho,\textbf{v})\rVert_\mathcal{Z}}{\lVert \textbf{h}\rVert_{\mathcal{V}}} \longrightarrow 0 \hspace{.75cm} \text{  as  } \lVert\textbf{h}\rVert_{\mathcal{V}}\rightarrow 0.
\end{equation}
To achieve this we need to prove the regularity of a solution to the system \eqref{eq5.026}-\eqref{eq5.031}.
Consider the following weak formulation of the system. For $x\in V,$ $\textbf{y}\in\mathbb{V}_{div}$,
\begin{align}
    <\rho',x>+(\bm{\tau}\nabla\xi,x) + (\Bar{u}\nabla\rho,x) + (\textbf{v}\nabla\Bar{\varphi},x) &= -\big(\big( m(\Bar{\varphi})(\nabla(a\rho)-\nabla J*\rho)+m'(\Bar{\varphi})\rho(\nabla(a\Bar{\varphi})-\nabla J*\Bar{\varphi})\big),\nabla x\big) \nonumber\\
    &\hspace{.5cm} - \big(\big( m'(\Bar{\varphi})\xi(\nabla(a\xi)-\nabla J*\xi)\big),\nabla x\big)-\big((\lambda'(\Bar{\varphi})\xi\nabla\xi),\nabla x\big)  \nonumber\\&\hspace{.5cm}  -\big(\big(\lambda(\Bar{\varphi})\nabla\rho+ \lambda'(\Bar{\varphi})\nabla\Bar{\varphi}\rho\big),\nabla x\big),\label{eq3.10}\\
    (\nu\nabla \textbf{v},\nabla\textbf{y})+ \eta (\textbf{v},\textbf{y}) &= -(\nabla(a) \rho \Bar{\varphi},\textbf{y})+ (a\xi\nabla\xi,\textbf{y})+ ((\nabla J*\Bar{\varphi})\rho,\textbf{y}) \nonumber\\
    &\hspace{.5cm}+ ((\nabla J*\rho)\Bar{\varphi},\textbf{y}) + ((\nabla J*\xi)\xi,\textbf{y}).\label{eq3.11}
\end{align}
For $x=\rho$ and $\textbf{y}=\textbf{v}$ we get, 
\begin{align}
    \frac{1}{2}\frac{d}{dt}\lVert\rho\rVert^2+(\bm{\tau}\nabla\xi,\rho) + (\textbf{v}\nabla\Bar{\varphi},\rho) &= -\big( m(\Bar{\varphi})(\nabla(a)\rho-\nabla J*\rho),\nabla\rho\big)-\big(m'(\Bar{\varphi})\rho(\nabla(a\Bar{\varphi})-\nabla J*\Bar{\varphi}),\nabla \rho\big) \nonumber\\
    &\hspace{.5cm} - \big( m'(\Bar{\varphi})\xi(\nabla(a\xi)-\nabla J*\xi),\nabla \rho\big) -\big((m(\Bar{\varphi})a+\lambda(\Bar{\varphi}))\nabla\rho,\nabla\rho\big) \nonumber\\
    &\hspace{.5cm}  -\big(\lambda'(\Bar{\varphi})\nabla\Bar{\varphi}\rho,\nabla \rho\big) -\big(\lambda'(\Bar{\varphi})\xi\nabla\xi,\nabla \rho\big) \label{eq3.12}\\
    \nu\lVert\nabla \textbf{v}\rVert^2+ \eta \lVert\textbf{v}\rVert^2&= -(\nabla(a)\rho\Bar{\varphi},\textbf{v})+ (a\xi\nabla\xi,\textbf{v})+ ((\nabla J*\Bar{\varphi})\rho,\textbf{v})+ ((\nabla J*\rho)\Bar{\varphi},\textbf{v})\nonumber\\
    &\hspace{.5cm}+ ((\nabla J*\xi)\xi,\textbf{v})\label{eq3.13}
\end{align}
We denote the terms of \eqref{eq3.12} on L.H.S by $I_1, I_2, I_3$  and the terms on R.H.S by $I_4, I_5,...,I_9$. Similarly we denote the terms of \eqref{eq3.13} on R.H.S by $J_1, J_2,...,J_5.$ We estimate each term by using H\"olders, Young's and Gagliardo-Nirenberg inequalities as applicable.
\begin{align*}
   |I_2|&=|(\bm{\tau}\nabla\xi,\rho)| \leq  \lVert\bm{\tau}\rVert_{L^4}\lVert\xi\rVert_{L^4}\lVert\nabla\rho\rVert \leq \epsilon\lVert\nabla\rho\rVert^2+ c\lVert\xi\rVert\lVert\nabla\xi\rVert\lVert\nabla\bm{\tau}\rVert^2\\ &\leq \epsilon\lVert\nabla\rho\rVert^2+ c(\lVert\xi\rVert^2+\lVert\nabla\xi\rVert^2)\lVert\nabla\bm{\tau}\rVert^2,\\
   |I_3|&= |(\textbf{v}\nabla\Bar{\varphi},\rho)| \leq \lVert\textbf{v}\rVert_{L^4}\lVert\nabla\Bar{\varphi}\rVert_{L^4}\lVert\rho\rVert \leq \lVert\nabla\textbf{v}\rVert\lVert\nabla\Bar{\varphi}\rVert^\frac{1}{2}\lVert\Bar{\varphi}\rVert_{H^2}^\frac{1}{2}\lVert\rho\rVert\\
    &\leq \epsilon_1\lVert\nabla\textbf{v}\rVert^2+c\lVert\nabla\Bar{\varphi}\rVert\lVert\Bar{\varphi}\rVert_{H^2}\lVert\rho\rVert^2,\\
    |I_4| &\leq \|m\|_{L^\infty}(\|\nabla a\|_{L^\infty}+\|\nabla J\|_{L^1})\lVert\rho\rVert\lVert\nabla\rho\rVert \leq \epsilon\lVert\nabla\rho\rVert^2+c\lVert\rho\rVert^2,\\ 
    |I_5| &\leq \|m'\|_{L^\infty}(\|\nabla a\|_{L^\infty}+\|\nabla J\|_{L^1})\lVert\Bar{\varphi}\rVert_{L^\infty}\lVert\rho\rVert\lVert\nabla\rho\rVert + \|m'\|_{L^\infty}\|a\|_{L^\infty}\lVert\nabla\Bar{\varphi}\rVert_{L^4}\lVert\rho\rVert_{L^4}\lVert\nabla\rho\rVert\\
    &\leq \epsilon\lVert\nabla\rho\rVert^2+c\lVert\rho\rVert^2+ c\lVert\lVert\nabla\Bar{\varphi}\rVert^\frac{1}{2}\lVert\Bar{\varphi}\rVert_{H^2}^\frac{1}{2}\lVert\rho\rVert^\frac{1}{2}\lVert\nabla\rho\rVert^\frac{1}{2}\lVert\nabla\rho\rVert \\&\leq 2\epsilon\lVert\nabla\rho\rVert^2+c\lVert\rho\rVert^2+ c\lVert\nabla\Bar{\varphi}\rVert\lVert\Bar{\varphi}\rVert_{H^2}\lVert\rho\rVert\lVert\nabla\rho\rVert \\
    &\leq 3\epsilon\lVert\nabla\rho\rVert^2+c\lVert\rho\rVert^2+ c\lVert\nabla\Bar{\varphi}\rVert^2\lVert\Bar{\varphi}\rVert_{H^2}^2\lVert\rho\rVert^2,\\    
    |I_6| &\leq \|m'\|_{L^\infty}(\|\nabla a\|_{L^\infty}+\|\nabla J\|_{L^1})\lVert\xi\rVert_{L^4}^2\lVert\nabla\rho\rVert + \|m'\|_{L^\infty}\|a\|_{L^\infty}\lVert\xi\rVert_{L^4}\lVert\nabla\xi\rVert_{L^4}\lVert\nabla\rho\rVert\\
    &\leq 2\epsilon\lVert\nabla\rho\rVert^2+ c\lVert\xi\rVert^2\lVert\nabla\xi\rVert^2+ c\lVert\nabla\xi\rVert^2\lVert\xi\rVert_{H^2}^2,\\
    I_7 &= \big((m(\Bar{\varphi})a+\lambda(\Bar{\varphi}))\nabla\rho,\nabla\rho\big) \geq \alpha_1\|\nabla \rho\|^2\\  
    |I_8| &\leq \|\lambda'\|_{L^\infty}\lVert\nabla\Bar{\varphi}\rVert_{L^4}\lVert\rho\rVert_{L^4}\lVert\nabla\rho\rVert \leq \epsilon\lVert\nabla\rho\rVert^2+ c\lVert\lVert\nabla\Bar{\varphi}\rVert\lVert\Bar{\varphi}\rVert_{H^2}\lVert\rho\rVert\lVert\nabla\rho\rVert\\
    &\leq 2\epsilon\lVert\nabla\rho\rVert^2+ c\lVert\lVert\nabla\Bar{\varphi}\rVert^2\lVert\Bar{\varphi}\rVert_{H^2}^2\lVert\rho\rVert^2 \end{align*}
\begin{align*} 
    |I_9| &\leq \|\lambda'\|_{L^\infty}\lVert\xi\rVert_{L^4}\lVert\nabla\xi\rVert_{L^4}\lVert\nabla\rho\rVert  \leq  \epsilon\lVert\nabla\rho\rVert^2+ c\lVert\xi\rVert_{L^4}^4+ c\lVert\nabla\xi\rVert_{L^4}^4\\
    &\leq \epsilon\lVert\nabla\rho\rVert^2+ c \lVert\xi\rVert^2\lVert\nabla\xi\rVert^2+ c\lVert\nabla\xi\rVert^2\lVert\xi\rVert_{H^2}^2.
\end{align*}
Further,\vspace{-.15cm}
\begin{align*}
    |J_1| &\leq \|\nabla a\|_{L^\infty}\lVert\rho\rVert\lVert\Bar{\varphi}\rVert_{L^\infty}\lVert\textbf{v}\rVert \leq \epsilon_1\lVert\textbf{v}\rVert^2+ c\lVert\rho\rVert^2,\\
    |J_2| &\leq \|a\|_{L^\infty}\lVert\xi\rVert_{L^4}\lVert\nabla\xi\rVert_{L^4}\lVert\textbf{v}\rVert 
    \leq \epsilon_1\lVert\textbf{v}\rVert^2+ c\lVert\xi\rVert_{L^4}^4 + c\lVert\nabla\xi\rVert_{L^4}^4\\
    &\leq \epsilon_1\lVert\textbf{v}\rVert^2+ c \lVert\xi\rVert^2\lVert\nabla\xi\rVert^2+ c\lVert\nabla\xi\rVert^2\lVert\xi\rVert_{H^2}^2,\\
    |J_3| &\leq \|\nabla J\|_{L^1}\lVert\Bar{\varphi}\rVert_{L^\infty}\lVert\rho\rVert\lVert\textbf{v}\rVert
    \leq \epsilon_1\lVert\textbf{v}\rVert^2+ c\lVert\rho\rVert^2,\\
    |J_4| &\leq \|\nabla J\|_{L^1}\lVert\rho\rVert\lVert\Bar{\varphi}\rVert_{L^\infty}\lVert\textbf{v}\rVert
    \leq \epsilon_1\lVert\textbf{v}\rVert^2+ c\lVert\rho\rVert^2,\\
    |J_5| &\leq \|\nabla J\|_{L^1}\lVert\xi\rVert_{L^4}^2\lVert\textbf{v}\rVert \leq \epsilon_1\lVert\textbf{v}\rVert^2+ c\lVert\xi\rVert^2\lVert\nabla\xi\rVert^2.
\end{align*}
\\Substituting above derived estimates in \eqref{eq3.12} and \eqref{eq3.13} results in following two inequalities.
\begin{align}
    \frac{1}{2}\frac{d}{dt}\lVert\rho\rVert^2 + \alpha_1\lVert\nabla\rho\rVert^2 &\leq 10\epsilon\lVert\nabla\rho\rVert^2 + \epsilon_2\lVert\nabla\textbf{v}\rVert^2 +C( 1+ \lVert\nabla\Bar{\varphi}\rVert^2 + \lVert\Bar{\varphi}\rVert_{H^2}^2 + \lVert\lVert\nabla\Bar{\varphi}\rVert^2\lVert\nabla\Bar{\varphi}\rVert_{H^2}^2)\lVert\rho\rVert^2 \nonumber\\
    &\hspace{.5cm} + C(\lVert\xi\rVert^2\lVert\nabla\bm{\tau}\rVert^2+\lVert\nabla\xi\rVert^2\lVert\nabla\bm{\tau}\rVert^2 + \lVert\xi\rVert^2\lVert\nabla\xi\rVert^2+ \lVert\nabla\xi\rVert^2\lVert\xi\rVert_{H^2}^2),  \label{eq3.14}\\
    \nu\lVert\nabla \textbf{v}\rVert^2+ \eta \lVert\textbf{v}\rVert^2 &\leq 5\epsilon_1\lVert\textbf{v}\rVert^2+ C(\lVert\rho\rVert^2 + \lVert\xi\rVert^2\lVert\nabla\xi\rVert^2+ \lVert\nabla\xi\rVert^2\lVert\xi\rVert_{H^2}^2). \label{eq3.15}
\end{align}
\\Adding \eqref{eq3.14} and \eqref{eq3.15}, and for a choice of $\epsilon < \frac{\alpha_1}{20}$, $\epsilon_1 < \frac{\eta}{10}$ and $\epsilon_2 < \frac{\nu}{2}$ we get,
\begin{align}
    \frac{1}{2}\frac{d}{dt}\lVert\rho\rVert^2 + \frac{\alpha_1}{2}\lVert\nabla\rho\rVert^2 +\frac{\nu}{2}\lVert\nabla \textbf{v}\rVert^2+ \frac{\eta}{2} \lVert\textbf{v}\rVert^2 &\leq   C( 1+ \lVert\nabla\Bar{\varphi}\rVert^2 + \lVert\Bar{\varphi}\rVert_{H^2}^2 + \lVert\lVert\nabla\Bar{\varphi}\rVert^2\lVert\Bar{\varphi}\rVert_{H^2}^2)\lVert\rho\rVert^2 + C\Lambda(t).\label{eq3.16}
\end{align}
\\Where $\Lambda(t) = (\lVert\xi\rVert^2\lVert\nabla\bm{\tau}\rVert^2+\lVert\nabla\xi\rVert^2\lVert\nabla\bm{\tau}\rVert^2 + \lVert\xi\rVert^2\lVert\nabla\xi\rVert^2+ \lVert\nabla\xi\rVert^2\lVert\xi\rVert_{H^2}^2)$. 
Integrate the inequality \eqref{eq3.16} over $[0,T]$ and then apply Gronwall's inequality; we get 
\begin{align}\label{eq4.47}
   \lVert\rho(t)\rVert^2 + \alpha_1\lVert\nabla\rho\rVert_{L^2(0,T;H)}^2 + \nu\lVert\nabla\textbf{v}\rVert_{L^2(0,T;\mathbb{G}_{div})}^2 + \eta\lVert\textbf{v}\rVert_{L^2(0,T;\mathbb{G}_{div})}^2 &\leq  C(1+ \gamma(t))\int\limits_{0}^{T}\Lambda(t)
\end{align}
Note that, using H\"olders inequality and the estimate \eqref{eq3.1},
\begin{align*}
    \int\limits_{0}^{T}\Lambda(t) &= \int\limits_{0}^{T}(\lVert\xi\rVert^2\lVert\nabla\bm{\tau}\rVert^2+\lVert\nabla\xi\rVert^2\lVert\nabla\bm{\tau}\rVert^2 + \lVert\xi\rVert^2\lVert\nabla\xi\rVert^2+ \lVert\nabla\xi\rVert^2\lVert\xi\rVert_{H^2}^2)
    \leq C \lVert \textbf{h}\rVert_{L^2(0,T;\mathbb{V}'_{div})}^4 
\end{align*}
Substituting the above estimate in \eqref{eq4.47} and applying Korn's inequality we get,
\begin{align}
    \lVert\rho\rVert_{L^\infty(0,T;H)}^2 + \alpha_1\lVert\nabla\rho\rVert_{L^2(0,T;H)}^2 + c_1\lVert\textbf{v}\rVert_{L^2(0,T;\mathbb{V}_{div})}^2 &\leq C(1+\gamma(T)) \lVert \textbf{h}\rVert_{L^2(0,T;\mathbb{V}'_{div})}^4 ,\label{eq3.17}
\end{align}
for a constant $c_1 > 0$.
Thus we obtain,
\begin{align*}
    \rho^h &\in L^\infty(0,T;H) \cap L^2(0,T;V),\\
    \textbf{v}^h &\in L^2(0,T;\mathbb{V}_{div}).
\end{align*}
Further, to obtain the regularity of $\rho',$ consider \eqref{eq3.10}.
\begin{align*}
    |(\bm{\tau}\xi,\nabla x)| &\leq \lVert\bm{\tau}\rVert_{L^4}\lVert\xi\rVert_{L^4}\lVert\nabla x\rVert \leq \lVert\nabla\bm{\tau}\rVert\lVert\xi\rVert_{V}\lVert\nabla x\rVert,\\
    |(\Bar{u}\nabla\rho,x)| &\leq \lVert\Bar{u}\rVert_{L^\infty}\lVert\rho\rVert\lVert\nabla x\rVert,\\
    |(\textbf{v}\Bar{\varphi},\nabla x)| &\leq \lVert\Bar{\varphi}\rVert_{L^\infty}\lVert\textbf{v}\rVert\lVert\nabla x\rVert,\\   
    |( m(\Bar{\varphi})(\nabla(a\rho)-\nabla J*\rho),\nabla x)| &\leq \|m\|_{L^\infty}(\|\nabla a\|_{L^\infty}+ \|\nabla J\|_{L^1})\lVert\rho\rVert\lVert\nabla x\rVert + \|m\|_{L^\infty}\|a\|_{L^\infty}\lVert\nabla\rho\rVert\lVert\nabla x\rVert \\
    &\leq c\lVert\rho\rVert_{V}\lVert\nabla x\rVert,\\    
    |(m'(\Bar{\varphi})\rho(\nabla(a\Bar{\varphi})-\nabla J*\Bar{\varphi}),\nabla x)| &\leq \|m'\|_{L^\infty}\lVert\rho\rVert(\|\nabla a\|_{L^\infty}+\|\nabla J\|_{L^1})\lVert\Bar{\varphi}\rVert_{L^\infty}\lVert\nabla x\rVert + \|m'\|_{L^\infty}\|a\|_{L^\infty}\lVert\Bar{\varphi}\rVert_{L^4}\lVert\rho\rVert_{L^4}\lVert\nabla x\rVert\\
    &\leq c(1+\lVert\Bar{\varphi}\rVert_V)\lVert\rho\rVert_{V}\lVert\nabla x\rVert,
\end{align*}
\begin{align*}     
    |(m'(\Bar{\varphi})\xi(\nabla(a\xi)-\nabla J*\xi),\nabla x)| &\leq \|m'\|_{L^\infty}(\|\nabla a\|_{L^\infty}+\|\nabla J\|_{L^1})\lVert\xi\rVert_{L^4}^2\lVert\nabla x\rVert + \|m'\|_{L^\infty}\|a\|_{L^\infty}\lVert\nabla\xi\rVert_{L^4}\lVert\xi\rVert_{L^4}\lVert\nabla x\rVert\\
    &\leq c(\lVert\xi\rVert\lVert\nabla\xi\rVert+\lVert\nabla\xi\rVert\lVert\xi\rVert_{H^2})\lVert\nabla x\rVert,\\
    |(\lambda(\Bar{\varphi})\nabla\rho+ \lambda'(\Bar{\varphi})\nabla\Bar{\varphi}\rho,\nabla x)| &\leq \|\lambda\|_{L^\infty}\lVert\nabla\rho\rVert\lVert\nabla x\rVert+ \|\lambda'\|_{L^\infty}\lVert\nabla\Bar{\varphi}\rVert_{L^4}\lVert\rho\rVert_{L^4}\lVert\nabla x\rVert\\
    &\leq c(1+\lVert\Bar{\varphi}\rVert_{H^2})\lVert\rho\rVert_{V}\lVert\nabla x\rVert,\\
    |(\lambda'(\Bar{\varphi})\xi\nabla\xi,\nabla x)| &\leq \|\lambda'\|_{L^\infty}\lVert\xi\rVert_{L^4}\lVert\nabla\xi\rVert_{L^4}\lVert\nabla x\rVert  \leq \|\lambda'\|_{L^\infty}(\lVert\xi\rVert_{L^4}^2+\lVert\nabla\xi\rVert_{L^4}^2)\lVert\nabla x\rVert\\
    &\leq c(\lVert\xi\rVert\lVert\nabla\xi\rVert+\lVert\nabla\xi\rVert\lVert\xi\rVert_{H^2})\lVert\nabla x\rVert.
    \end{align*}
Therefore,
\begin{equation*}
   |<\rho',x>| \leq C\big(\lVert\textbf{v}\rVert+ \lVert\rho\rVert_{V} + \lVert\nabla\bm{\tau}\rVert\lVert\xi\rVert_{V} +  \lVert\xi\rVert\lVert\nabla\xi\rVert+\lVert\nabla\xi\rVert\lVert\xi\rVert_{H^2}\big)\lVert x\rVert_{V}. 
\end{equation*}
This implies $ \lVert\rho'\rVert_{V'} \leq C\big(\lVert\textbf{v}\rVert+ \lVert\rho\rVert_{V} + \lVert\nabla\bm{\tau}\rVert\lVert\xi\rVert_{V} +  \lVert\xi\rVert\lVert\nabla\xi\rVert+\lVert\nabla\xi\rVert\lVert\xi\rVert_{H^2}\big)$.   Integration over $[0,T]$ and  substituting estimates \eqref{eq3.1} and \eqref{eq3.17}, we get,
\begin{equation}\label{eq3.19}
   \lVert\rho'\rVert_{L^2(0,T;V')} \leq C\lVert\textbf{h}\rVert_{L^2(0,T;\mathbb{V}_{div}')}^2 .
\end{equation}
Since $\mathcal{U} \subseteq L^2(0,T;\mathbb{V}_{div}'),$ as $\lVert \textbf{h}\rVert_\mathcal{U}\rightarrow 0,$
\begin{align}\label{eq4.50}
    \frac{\lVert(\rho,\textbf{v})\rVert_{\mathcal{Z}}}{\lVert \textbf{h}\rVert_\mathcal{U}}  
    \leq {C}\lVert \textbf{h}\rVert_\mathcal{U} 
\end{align}
which goes to 0 as $\lVert \textbf{h}\rVert_\mathcal{U} $ goes to 0 and thus we have proved the differentiability of the control to state operator.
\end{proof}

\section{Characterisation of an optimal control}
In this section we will use the differentiability of the control to state operator to obtain a first order necessary optimality condition for the OCP. Further, we characterise an optimal control in terms of an adjoint variable. 
\subsection{First order necessary optimality condition}
\begin{theorem}
Let $\Bar{\textbf{U}} \in \mathcal{U}_{ad}$ be an optimal control and $(\Bar{\varphi},\Bar{\textbf{u}})$ be the corresponding optimal state. Assume all the hypotheses of Theorem 4.1 holds. Then the following inequality holds. 
 \begin{align}\label{eq5.01}
    \int\limits_{0}^{T}\int_{\Omega}\psi(\Bar{\varphi}-\varphi_d) + \int\limits_{0}^{T}\int_{\Omega}\textbf{w}(\Bar{\textbf{u}}-\textbf{u}_d) + \int_{\Omega}\psi(T)(\Bar{\varphi}(T)-\varphi_\Omega)+ \int\limits_{0}^{T}\int_\Omega (\textbf{U}-\Bar{\textbf{U}})\Bar{\textbf{U}} \geq 0.
\end{align}
 $\forall \ \textbf{U}\in \mathcal{U}_{ad}$, where $(\psi,\textbf{w})$ is the unique weak solution to the linearized system \eqref{eq3.2}-\eqref{eq3.7} corresponding to the control $\textbf{U}-\Bar{\textbf{U}}.$
\end{theorem}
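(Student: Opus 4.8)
The plan is to derive \eqref{eq5.01} from the fact that $\bar{\textbf{U}}$ minimises the reduced cost functional $j(\textbf{U}) := \mathcal{J}(\mathcal{S}(\textbf{U}),\textbf{U})$ over the convex set $\mathcal{U}_{ad}$, and then to evaluate the resulting one-sided directional derivative by the chain rule. First I would fix an arbitrary $\textbf{U}\in\mathcal{U}_{ad}$ and use the convexity of $\mathcal{U}_{ad}$ to observe that $\bar{\textbf{U}}+t(\textbf{U}-\bar{\textbf{U}})\in\mathcal{U}_{ad}$ for every $t\in[0,1]$. Optimality of $\bar{\textbf{U}}$ then gives $j(\bar{\textbf{U}}+t(\textbf{U}-\bar{\textbf{U}}))\geq j(\bar{\textbf{U}})$, so dividing by $t>0$ and letting $t\to 0^+$ yields
\begin{equation*}
j'(\bar{\textbf{U}})(\textbf{U}-\bar{\textbf{U}}) \geq 0,
\end{equation*}
provided this directional derivative exists.

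Next I would compute $j'(\bar{\textbf{U}})(\textbf{U}-\bar{\textbf{U}})$ by the chain rule. By Theorem 4.4 the map $\mathcal{S}$ is Fr\'echet differentiable from $\mathcal{U}$ into $\mathcal{Z}$, with $\mathcal{S}'(\bar{\textbf{U}})(\textbf{U}-\bar{\textbf{U}})=(\psi,\textbf{w})$, the unique weak solution of the linearised system \eqref{eq3.2}-\eqref{eq3.7} with control $\textbf{U}-\bar{\textbf{U}}$. Since $\mathcal{J}$ is a quadratic (hence smooth) tracking functional on $\mathcal{Z}\times\mathcal{U}$, the chain rule gives
\begin{align*}
j'(\bar{\textbf{U}})(\textbf{U}-\bar{\textbf{U}}) &= 2\int_0^T\!\!\int_\Omega (\bar{\varphi}-\varphi_d)\psi + 2\int_0^T\!\!\int_\Omega (\bar{\textbf{u}}-\textbf{u}_d)\cdot\textbf{w} \\
&\quad + 2\int_\Omega (\bar{\varphi}(T)-\varphi_\Omega)\psi(T) + 2\int_0^T\!\!\int_\Omega \bar{\textbf{U}}\cdot(\textbf{U}-\bar{\textbf{U}}).
\end{align*}
Dividing by $2$ and combining with the sign of the directional derivative produces exactly \eqref{eq5.01}.

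I expect the main technical point to be the rigorous justification that $\mathcal{J}$ is differentiable in the sense required for the chain rule, the only delicate term being the terminal observation $\int_\Omega|\varphi(T)-\varphi_\Omega|^2$. Its differentiability requires that the evaluation map $\varphi\mapsto\varphi(T)$ be continuous and linear from the state space into $H$; this is guaranteed by the embedding $L^2(0,T;V)\cap H^1(0,T;V')\hookrightarrow C([0,T];H)$, which was already used in the proof of Theorem 4.3 to conclude $\psi\in C([0,T];H)$, so that $\psi(T)\in H$ is well defined and the terminal term in the chain rule is legitimate. With this in hand, each term of $\mathcal{J}$ is a composition of a continuous quadratic form with a bounded linear observation operator, hence Fr\'echet differentiable, and its G\^ateaux derivative coincides with the expression above. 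The core of the argument is therefore to verify that the difference quotient $\tfrac{1}{t}\big(j(\bar{\textbf{U}}+t(\textbf{U}-\bar{\textbf{U}}))-j(\bar{\textbf{U}})\big)$ converges to the chain-rule expression as $t\to0^+$, which follows by combining the Fr\'echet differentiability of $\mathcal{S}$ from Theorem 4.4 with the continuity of the tracking and terminal observation operators.
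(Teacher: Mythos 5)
Your proposal is correct and follows essentially the same route as the paper: the paper also reduces to the variational inequality $G'(\bar{\textbf{U}})(\textbf{U}-\bar{\textbf{U}})\geq 0$ (citing a standard lemma of Tr\"oltzsch, whose proof is exactly your convexity/difference-quotient argument) and then identifies the derivative using the Fr\'echet differentiability of $\mathcal{S}$ from Theorem 4.4. The only cosmetic difference is that the paper expands the quadratic difference quotients of the cost explicitly and passes to the limit $\beta\to 0^+$ term by term, where you invoke the chain rule for the smooth quadratic functional together with the continuity of the observation maps (including the terminal evaluation $\varphi\mapsto\varphi(T)$ in $H$), which is the same computation packaged abstractly.
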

\begin{proof}
Consider $G(\textbf{U}):=\mathcal{J}(S(\textbf{U}),\textbf{U})$, $\textbf{U}\in \mathcal{U}_{ad}$. Let $\Bar{\textbf{U}}$ be an optimal control that minimises $G(\textbf{U})$. Then by \cite[Lemma 2.21]{FTO},
\begin{equation}\label{eq5.2}
    G'(\Bar{\textbf{U}})(\textbf{U}-\Bar{\textbf{U}}) \geq 0,\,\,\,\, \forall\,\, \textbf{U}\in \mathcal{U}_{ad}.
\end{equation}
We denote $\mathcal{S}(\Bar{\textbf{U}}+\beta \textbf{U})$ and $\mathcal{S}(\Bar{\textbf{U}})$ by $(\varphi_\beta, \textbf{u}_\beta),$ $(\Bar{\varphi},\Bar{\textbf{u}})$ respectively.
\begin{align*}
    G(\Bar{\textbf{U}}+\beta \textbf{U})-G(\Bar{\textbf{U}}) &= \frac{1}{2}\Big( \int\limits_{0}^{T}\int_{\Omega}(\varphi_\beta-\varphi_d)^2- (\Bar{\varphi}-\varphi_d)^2\Big)+ \frac{1}{2}\Big( \int\limits_{0}^{T}\int_{\Omega}(\textbf{u}_\beta-\textbf{u}_d)^2- (\Bar{\textbf{u}}-\textbf{u}_d)^2\Big)\\&\hspace{.5cm} + \frac{1}{2}\Big( \int_{\Omega}(\varphi_\beta-\varphi_\Omega)^2- (\Bar{\varphi}-\varphi_\Omega)^2\Big) + \int\limits_{0}^{T}\int_\Omega \beta^2\textbf{U}^2+2\beta \textbf{U}\Bar{\textbf{U}}
\end{align*}
\begin{align*}
    &= \frac{1}{2}\Big[ \int\limits_{0}^{T}\int_{\Omega}(\varphi_\beta-\Bar{\varphi})(\varphi_\beta+\Bar{\varphi}-2\varphi_d) + \int\limits_{0}^{T}\int_{\Omega}(\textbf{u}_\beta-\Bar{\textbf{u}})(\textbf{u}_\beta+\Bar{\textbf{u}}-2\textbf{u}_d) \\
    & \hspace{.5cm}+ \int_{\Omega}(\varphi_\beta(T)-\Bar{\varphi}(T))(\varphi_\beta(T)+\Bar{\varphi}(T)-2\varphi_\Omega)+ \int\limits_{0}^{T}\int_\Omega \beta^2\textbf{U}^2+2\beta \textbf{U}\Bar{\textbf{U}}\Big],\\
    \frac{G(\Bar{\textbf{U}}+\beta \textbf{U})-G(\Bar{\textbf{U}})}{\beta} &= \frac{1}{2}\Big[ \int\limits_{0}^{T}\int_{\Omega}\Big(\frac{\varphi_\beta-\Bar{\varphi}}{\beta}\Big)(\varphi_\beta+\Bar{\varphi}-2\varphi_d) + \int\limits_{0}^{T}\int_{\Omega}\Big(\frac{\textbf{u}_\beta-\Bar{\textbf{u}}}{\beta}\Big)(\textbf{u}_\beta+\Bar{\textbf{u}}-2\textbf{u}_d) \\
    & \hspace{.5cm}+ \int_{\Omega}\Big(\frac{\varphi_\beta(T)-\Bar{\varphi}(T)}{\beta}\Big)(\varphi_\beta(T)+\Bar{\varphi}(T)-2\varphi_\Omega)+ \int\limits_{0}^{T}\int_\Omega \beta \textbf{U}^2+2 \textbf{U}\Bar{\textbf{U}}\Big].
\end{align*}
We have, $(\varphi_\beta-\Bar{\varphi}, \textbf{u}_\beta-\Bar{\textbf{u}})$ satisfies equations similar to \eqref{eq3.052}-\eqref{eq3.053}. Using \eqref{eq4.39}, \eqref{eq4.50} we have,  
\begin{equation*}
\frac{\lVert S(\Bar{\textbf{U}}+\beta\textbf{U})-S(\Bar{\textbf{U}})-(\psi_\beta,\textbf{w}_\beta)\rVert_{\mathcal{Z}}}{\lVert \beta\textbf{U}\rVert_\mathcal{U}} = \frac{\lVert (\varphi_\beta-\Bar{\varphi},\textbf{u}_\beta-\Bar{\textbf{u}} ) -(\psi_\beta,\textbf{w}_\beta)\rVert_{\mathcal{Z}}}{\lVert \beta\textbf{U}\rVert_\mathcal{U}} \leq \lVert \beta\textbf{U}\rVert_\mathcal{U} ,
\end{equation*} 
 where $(\psi_\beta, \textbf{w}_\beta)$ is the unique weak solution to the linearized system \eqref{eq3.2}-\eqref{eq3.7} corresponding to the control $\beta\textbf{U}$. And as $\beta \rightarrow 0$, $ (\frac{\varphi_\beta-\Bar{\varphi}}{\beta}, \frac{\textbf{u}_\beta-\Bar{\textbf{u}}}{\beta})$ goes to $(\psi_{\textbf{U}}, \textbf{w}_\textbf{U})$ which is the unique weak solution of the linearized system \eqref{eq3.2}-\eqref{eq3.7} with control $\textbf{U}$. 
\\Since $\Bar{\textbf{U}}+\beta\textbf{U} \rightarrow \Bar{\textbf{U}}$ as $\beta \rightarrow 0$, we have  $(\varphi_\beta, \textbf{u}_\beta ) \rightarrow (\Bar{\varphi}, \Bar{\textbf{u}})$ in $L^2(0,T;H) \times L^2(0,T;\mathbb{G}_{div})$ as $\beta \rightarrow 0$ using the continuous dependence estimate \eqref{eq3.1}. Therefore by passing the limit $\beta \rightarrow 0$, we obtain,

\begin{align}\label{eq4.01}
    G'(\Bar{\textbf{U}})(\textbf{U}) &= \int\limits_{0}^{T}\int_{\Omega}\psi_{\textbf{U}}(\Bar{\varphi}-\varphi_d) + \int\limits_{0}^{T}\int_{\Omega}\textbf{w}_\textbf{U}(\Bar{\textbf{u}}-\textbf{u}_d) + \int_{\Omega}\psi_{\textbf{U}}(T)(\Bar{\varphi}(T)-\varphi_\Omega)+ \int\limits_{0}^{T}\int_\Omega \textbf{U}\Bar{\textbf{U}}.
\end{align}
 Hence using $\textbf{U}-\Bar{\textbf{U}}$ in place of $\textbf{U}$ in the above inequality and   using \eqref{eq5.2} we get, first order optimality condition that $\forall \textbf{U}\in \mathcal{U}_{ad},$ namely 
\begin{align*}
    G'(\Bar{\textbf{U}})(\textbf{U}-\Bar{\textbf{U}}) &= \int\limits_{0}^{T}\int_{\Omega}
    \psi_{{\textbf{U}-\Bar{\textbf{U}} }}(\Bar{\varphi}-\varphi_d) + \int\limits_{0}^{T}\int_{\Omega}\textbf{w}_{{\textbf{U}-\Bar{\textbf{U}} }}(\Bar{\textbf{u}}-\textbf{u}_d) + \int_{\Omega}\psi_{{\textbf{U}-\Bar{\textbf{U}} }}(T)(\Bar{\varphi}(T)-\varphi_\Omega)+ \int\limits_{0}^{T}\int_\Omega (\textbf{U}-\Bar{\textbf{U}})\Bar{\textbf{U}} \\ &\geq 0.
\end{align*}
\end{proof}

\subsection{ The adjoint problem}
Now we wish to characterise an optimal control in terms of an adjoint variable. For, we introduce an adjoint system and we discuss the well-posedness of the system. 
Consider the following adjoint system in $(\xi,\textbf{v})$ corresponding to the system \eqref{eq1}-\eqref{eq07}. 
\begin{align}
     -\xi' - \Bar{u}\nabla\xi +m'(\Bar{\varphi})((\nabla a)\Bar{\varphi}-\nabla J*\Bar{\varphi})\nabla\xi  &- \nabla J*(m(\Bar{\varphi})\nabla\xi)  - (m(\Bar{\varphi})a   + \lambda(\Bar{\varphi}))\Delta\xi 
      \nonumber\\+ ((\nabla a)\Bar{\varphi} - \nabla J*\Bar{\varphi})\textbf{v}- \nabla J*(\Bar{\varphi}\textbf{v}) &= (\Bar{\varphi}-\varphi_d),\label{eq4.1}\\
     -\nabla\cdot(\nu\nabla\textbf{v})  +\eta\textbf{v}+\xi\nabla\Bar{\varphi}+\nabla\pi &=(\Bar{\textbf{u}}-\textbf{u}_d),\label{eq4.2}\\
     \nabla\cdot \textbf{v} &=0, \text{ in } Q,\\
     \textbf{v}=0, \hspace{.25cm}\frac{\partial\xi}{\partial \textbf{n}} &=0, \text{ on } \Sigma,\\
     \xi(x,T) &=(\Bar{\varphi}(T)-\varphi_\Omega)(x),  \text{ in }\Omega.\label{eq4.3}
\end{align}

\begin{theorem}(Existence and uniqueness of a weak solution to the adjoint system)
Let $\varphi_0 \in H^2(\Omega) \cap L^\infty(\Omega)$, $J \in W^{2,1}(\Omega)$ and $[\textbf{J}], [\textbf{A1}]-[\textbf{A4}]$ holds true. In addition, assume that the mobility, $m \in C^2[-1,1]$ and $\lambda \in C^2[-1,1]$. Then there exists a unique weak solution to the adjoint system \eqref{eq4.1}-\eqref{eq4.3} defined as in Theorem 4.2.
\end{theorem}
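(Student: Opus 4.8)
The plan is to recognise the adjoint system \eqref{eq4.1}-\eqref{eq4.3} as a linear, backward-in-time coupled parabolic--elliptic system with exactly the same structural features as the linearised system of Theorem 4.2, so the argument will mirror that proof. First I would perform the time reversal $s = T - t$, setting $\tilde\xi(s) = \xi(T-s)$ and $\tilde{\textbf{v}}(s) = \textbf{v}(T-s)$; this turns the terminal condition $\xi(\cdot,T) = \bar\varphi(T) - \varphi_\Omega$ into an initial condition and converts $-\xi'$ into a standard forward time derivative, yielding a forward parabolic equation for $\tilde\xi$ coupled to a quasi-static Stokes--Brinkman equation for $\tilde{\textbf{v}}$. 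I would then write the weak formulation, integrating the principal part $-(m(\bar\varphi)a + \lambda(\bar\varphi))\Delta\xi$ by parts against a test function (the boundary term vanishing by $\partial\xi/\partial\textbf{n} = 0$), and construct Galerkin approximations $(\xi_n, \textbf{v}_n)$ using the same Neumann-Laplacian and Stokes eigenbases as in Theorem 4.2. Solvability of the finite-dimensional system follows, as there, from Carath\'eodory's theorem after eliminating $\textbf{v}_n$ via the elliptic equation.

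The core of the proof is a uniform energy estimate. I would test the $\xi$-equation with $\xi_n$ and the $\textbf{v}$-equation with $\textbf{v}_n$ and add. The diffusion term produces $\int (m(\bar\varphi)a + \lambda(\bar\varphi))|\nabla\xi_n|^2 \geq \alpha_1\|\nabla\xi_n\|^2$ by assumption \textbf{[A4]}, which is the crucial coercivity; the transport term $-\bar{\textbf{u}}\cdot\nabla\xi_n$ integrates to zero against $\xi_n$ since $\bar{\textbf{u}}$ is divergence-free. The remaining terms are all of lower order: the nonlocal convolutions are controlled by Young's inequality for convolutions, $\|\nabla J * g\| \leq \|\nabla J\|_{L^1}\|g\|$, together with the $L^\infty$ bounds on $m, m', \lambda, \lambda'$ coming from $m,\lambda \in C^2[-1,1]$ and $|\bar\varphi| \leq 1$; the term generated by $\nabla(m(\bar\varphi)a + \lambda(\bar\varphi))$ and the source-type couplings $\xi\nabla\bar\varphi$, $(\nabla a)\bar\varphi\,\textbf{v}$ are handled by Ladyzhenskaya/Gagliardo--Nirenberg ($\|\xi_n\|_{L^4} \leq c\|\xi_n\|^{1/2}\|\nabla\xi_n\|^{1/2}$ in $d=2$) using the state regularity $\bar\varphi \in L^\infty(0,T;H^2) \hookrightarrow L^\infty(0,T;W^{1,4})$ and $\bar{\textbf{u}} \in L^\infty(0,T;\mathbb{V}_{div})$ from Theorem 3.1. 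After choosing the small parameters appropriately and absorbing the $\|\nabla\xi_n\|^2$ and $\|\textbf{v}_n\|_{\mathbb{V}_{div}}^2$ contributions into the left-hand side (using Korn's inequality for the velocity), one is led to a differential inequality of the form $\tfrac{d}{dt}\|\xi_n\|^2 + \alpha_1\|\nabla\xi_n\|^2 + c\|\textbf{v}_n\|_{\mathbb{V}_{div}}^2 \leq C(1 + \|\nabla\bar\varphi\|_{L^4}^4)\|\xi_n\|^2 + C\|\bar\varphi - \varphi_d\|^2 + C\|\bar{\textbf{u}} - \textbf{u}_d\|^2$. Gronwall's inequality then yields uniform bounds for $\xi_n$ in $L^\infty(0,T;H) \cap L^2(0,T;V)$ and for $\textbf{v}_n$ in $L^2(0,T;\mathbb{V}_{div})$.

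Next I would bound $\xi_n'$ in $L^2(0,T;V')$ exactly as in \eqref{eq4.19}, by testing with an arbitrary $x \in V$ and dominating each term by $\|x\|_V$ using the estimates just obtained. With these uniform bounds I would extract weakly/weakly-$*$ convergent subsequences, invoke the Aubin--Lions lemma to get $\xi_n \to \xi$ strongly in $L^2(0,T;H)$ (hence pointwise a.e.\ along a further subsequence), and pass to the limit in the Galerkin equations; the strong convergence is needed to identify the limits of products of $\xi_n$ with the fixed coefficients built from $\bar\varphi$, while the linear structure makes the remaining passages routine. Reversing time restores a weak solution of \eqref{eq4.1}-\eqref{eq4.3} with the regularity asserted in Theorem 4.2. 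Uniqueness is immediate from linearity: the difference of two solutions solves the homogeneous system with zero data, and the same energy estimate followed by Gronwall forces it to vanish.

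The step I expect to be the main obstacle is the energy estimate, specifically absorbing the coupling and nonlocal contributions. Because $\textbf{v}$ is determined only elliptically (there is no time-derivative control on $\textbf{v}$), its bound must be recovered solely from the Stokes--Brinkman equation \eqref{eq4.2}, and the cross terms $((\nabla a)\bar\varphi - \nabla J*\bar\varphi)\textbf{v} - \nabla J*(\bar\varphi\textbf{v})$ in the $\xi$-equation together with $\xi\nabla\bar\varphi$ in the $\textbf{v}$-equation must be distributed carefully among $\epsilon\|\nabla\xi\|^2$, $\epsilon\|\textbf{v}\|_{\mathbb{V}_{div}}^2$ and Gronwall-admissible multiples of $\|\xi\|^2$ whose coefficient lies in $L^1(0,T)$; this is precisely where the full strength of the state regularity from Theorem 3.1 is used.
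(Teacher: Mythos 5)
Your proposal is correct and follows essentially the same route as the paper: a Faedo--Galerkin scheme with the Neumann and Stokes eigenbases, coercivity of the principal part via assumption $[\textbf{A4}]$ after integrating by parts with the Neumann condition, an energy estimate combining both equations with Korn's inequality and (backward) Gronwall, a dual bound on $\xi'$ in $L^2(0,T;V')$, Aubin--Lions for compactness, and uniqueness by linearity. Your explicit time reversal and the observation that the transport term $(\bar{\textbf{u}}\cdot\nabla\xi,\xi)$ vanishes by incompressibility are only cosmetic variants (the paper integrates directly from $t$ to $T$ and estimates that term using $\|\bar{\textbf{u}}\|_{L^\infty}$), not a different argument.
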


\begin{proof}
Consider the following weak formulation of the adjoint problem, $\forall$ $\zeta \in V$ and $\textbf{z}\in \mathbb{V}_{div}$,
\begin{align*}
     -<\xi',\zeta> - (\Bar{u}\nabla\xi,\zeta) + (m'(\Bar{\varphi})((\nabla (a)\Bar{\varphi}-\nabla J*\Bar{\varphi})\nabla\xi,\zeta)  - (\nabla J*(m(\Bar{\varphi})\nabla\xi),\zeta)  \nonumber\\- ((m(\Bar{\varphi})a+ \lambda(\Bar{\varphi}))\Delta\xi,\zeta) 
     + ((\nabla (a)\Bar{\varphi}-\nabla J*\Bar{\varphi})\textbf{v},\zeta)- (\nabla J*(\Bar{\varphi}\textbf{v}),\zeta) &= (\Bar{\varphi}-\varphi_d,\zeta),\\
     (\nu\nabla\textbf{v},\nabla\textbf{z})  +\eta(\textbf{v},\textbf{z})+ (\xi\nabla\Bar{\varphi},\textbf{z}) &=(\Bar{\textbf{u}}-\textbf{u}_d, \textbf{z}) .
\end{align*}
Consider $(\xi, \textbf{v}) \in  L^\infty(0,T;H) \cap L^2(0,T;V) \cap H^1(0,T;V')\times  L^2(0,T;\mathbb{V}_{div})$. We say, $(\xi, \textbf{v})$ is a weak solution to the system \eqref{eq4.1}-\eqref{eq4.3} if it satisfies the above weak formulation $\forall$ $\zeta \in V$ and $\textbf{z}\in \mathbb{V}_{div}$.
\\We prove the existence of a weak solution by the Faedo-Galerkin approximation method. We omit the detailed calculations as they are similar to the ones in the proof of  Theorem 4.2. We will formally derive basic a-priori estimates which would help to extract a weakly convergent subsequence and hence a weak solution. 
For, testing \eqref{eq4.1} with $\xi \in V$,  we obtain, 
\begin{align*}
     -\frac{1}{2}\frac{d}{dt}\lVert\xi\rVert^2- (\Bar{u}\nabla\xi,\xi) + (m'(\Bar{\varphi})((\nabla (a)\Bar{\varphi}-\nabla J*\Bar{\varphi})\nabla\xi,\xi)  - (\nabla J*(m(\Bar{\varphi})\nabla\xi),\xi) &- ((m(\Bar{\varphi})a+ \lambda(\Bar{\varphi}))\Delta\xi,\xi)\nonumber\\
     + ((\nabla (a)\Bar{\varphi}-\nabla J*\Bar{\varphi})\textbf{v},\xi)- (\nabla J*(\Bar{\varphi}\textbf{v}),\xi) &= (\Bar{\varphi}-\varphi_d,\xi).
\end{align*}
We will denote the terms on L.H.S of above equation by $I_1,I_2,...,I_7,$ and estimate each term separately,
\begin{align*}
    |I_2| &\leq \lVert\Bar{\textbf{u}}\rVert_{L^\infty}\lVert\nabla\xi\rVert\lVert\xi\rVert  \leq \frac{\alpha_1}{12}\lVert\nabla\xi\rVert^2 + c\lVert\Bar{\textbf{u}}\rVert_{L^\infty}^2\lVert\xi\rVert^2,\\
    |I_3| &\leq \|m'\|_{L^\infty}(\lVert\nabla a\rVert_{L^\infty} + \lVert\nabla J\rVert_{L^1})\lVert\Bar{\varphi}\rVert_{L^\infty}\lVert\nabla\xi\rVert\lVert\xi\rVert \\
    &\leq \frac{\alpha_1}{12}\lVert\nabla\xi\rVert^2 + c\lVert\xi\rVert^2, \\
    |I_4| &\leq \lVert\nabla J\rVert_{L^1}\|m\|_{L^\infty}\lVert\nabla\xi\rVert\lVert\xi\rVert \leq \frac{\alpha_1}{12}\lVert\nabla\xi\rVert^2 +  c\lVert\xi\rVert^2,\\
    \text{ Using boundary condition on $\xi$ we can write }\\
    I_5 &= ((m(\Bar{\varphi})a+ \lambda(\Bar{\varphi}))\nabla\xi,\nabla\xi)+ (\nabla(m(\Bar{\varphi})a+ \lambda(\Bar{\varphi}))\nabla\xi,\xi),\\
    |((m(\Bar{\varphi})a+ \lambda(\Bar{\varphi}))\nabla\xi,\nabla\xi)| &\geq \alpha_1\lVert\nabla\xi\rVert^2,\\
    |(\nabla(m(\Bar{\varphi})a+ \lambda(\Bar{\varphi}))\nabla\xi,\xi)| &\leq c\lVert\nabla\Bar{\varphi}\rVert_{L^4}\lVert\xi\rVert_{L^4}\lVert\nabla\xi\rVert + c\lVert\xi\rVert\lVert\nabla\xi\rVert \\
    &\leq \frac{\alpha_1}{6}\lVert\nabla\xi\rVert^2  +\lVert\nabla\Bar{\varphi}\rVert\lVert\Bar{\varphi}\rVert_{H^2}\lVert\xi\rVert\lVert\nabla\xi\rVert + c\lVert\xi\rVert^2\\
    &\leq \frac{\alpha_1}{4}\lVert\nabla\xi\rVert^2 + c(1+\lVert\nabla\Bar{\varphi}\rVert^2\lVert\Bar{\varphi}\rVert_{H^2}^2)\lVert\xi\rVert^2,\\
    |I_6| &\leq (\lVert\nabla a\rVert_{L^\infty} + \lVert\nabla J\rVert_{L^1})\lVert\Bar{\varphi}\rVert_{L^\infty}\lVert\textbf{v}\rVert\lVert\xi\rVert \leq \frac{\eta}{6}\lVert\textbf{v}\rVert^2 + c \lVert\xi\rVert ^2,\\
    |I_7| &\leq \lVert\nabla J\rVert_{L^1}\lVert\Bar{\varphi}\rVert_{L^\infty}\lVert\textbf{v}\rVert\lVert\xi\rVert \leq \frac{\eta}{6}\lVert\textbf{v}\rVert^2+ c\lVert\Bar{\varphi}\rVert_{L^\infty}^2\lVert\xi\rVert^2,\\
    |(\Bar{\varphi}-\varphi_d,\xi)| &\leq \lVert\Bar{\varphi}-\varphi_d\rVert\lVert\xi\rVert \leq c(\lVert\Bar{\varphi}-\varphi_d\rVert^2 + \lVert\xi\rVert^2).
\end{align*}
Combining all the above estimates we get, 
\begin{align}\label{eq4.10}
     -\frac{1}{2}\frac{d}{dt}\lVert\xi\rVert^2 + \alpha_1\lVert\nabla\xi\rVert^2 &\leq \frac{\alpha_1}{2}\lVert\nabla\xi\rVert^2 +\frac{\eta}{3}\lVert\textbf{v}\rVert^2 + c\lVert\Bar{\varphi}-\varphi_d\rVert^2 + C(1+ \lVert\Bar{\textbf{u}}\rVert_{L^\infty}^2  +\lVert\nabla\Bar{\varphi}\rVert^2\lVert\Bar{\varphi}\rVert_{H^2}^2)\lVert\xi\rVert^2.
\end{align}
    Similarly testing \eqref{eq4.2} with $\textbf{v} \in \mathbb{V}_{div} $, we get, 
\begin{align*}
    \nu\lVert\nabla\textbf{v}\rVert^2  +\eta\lVert\textbf{v}\rVert^2 + (\xi\nabla\Bar{\varphi},\textbf{v}) &=(\Bar{\textbf{u}}-\textbf{u}_d, \textbf{v}).
\end{align*}
\begin{align}\label{eq4.11}
    \nu\lVert\nabla\textbf{v}\rVert^2  +\eta\lVert\textbf{v}\rVert^2 &\leq  \lVert\xi\rVert\lVert\nabla\Bar{\varphi}\rVert_{L^4}\lVert\textbf{v}\rVert_{L^4} + \lVert\Bar{\textbf{u}}-\textbf{u}_d\rVert \lVert\textbf{v}\rVert \nonumber\\
    &\leq \frac{\nu}{2}\lVert\nabla\textbf{v}\rVert^2 + c\lVert\nabla\Bar{\varphi}\rVert\lVert\Bar{\varphi}\rVert_{H^2}\lVert\xi\rVert^2 + \frac{\eta}{6} \lVert\textbf{v}\rVert^2+ c\lVert\Bar{\textbf{u}}-\textbf{u}_d\rVert^2.
\end{align}
Adding \eqref{eq4.10} and \eqref{eq4.11} followed by an application of Korn's inequality gives the following inequality,
 \begin{align}\label{eq4.13}
     -\frac{1}{2}\frac{d}{dt}\lVert\xi\rVert^2 + \frac{\alpha_1}{2}\lVert\nabla\xi\rVert^2 +c_1\lVert\textbf{v}\rVert_{\mathbb{V}_{div}}^2 &\leq c(\lVert\Bar{\varphi}-\varphi_d\rVert^2+ \lVert\Bar{\textbf{u}}-\textbf{u}_d\rVert^2) + C(1+ \lVert\Bar{\textbf{u}}\rVert_{L^\infty}^2 + \lVert\Bar{\varphi}\rVert_{H^2}^2  +\lVert\nabla\Bar{\varphi}\rVert^2\lVert\Bar{\varphi}\rVert_{H^2}^2)\lVert\xi\rVert^2,
\end{align}
where $c_1>0$. Now integrating \eqref{eq4.13} from $t$ to $T,$
\begin{align}\label{eq4.14}
     \frac{1}{2}\lVert\xi(t)\rVert^2 + \frac{\alpha_1}{2}\int\limits_{t}^{T}\lVert\nabla\xi\rVert^2 +c_1\int\limits_{t}^{T}\lVert\textbf{v}\rVert_{\mathbb{V}_{div}}^2 \leq \frac{1}{2}\lVert\Bar{\varphi}(T)-\varphi_\Omega(T)\rVert^2 + c\int\limits_{t}^{T}\lVert\Bar{\varphi}-\varphi_d\rVert^2+ c\int\limits_{t}^{T}\lVert\Bar{\textbf{u}}-\textbf{u}_d\rVert^2 +C\Lambda(t)\int\limits_{t}^{T}\lVert\xi\rVert^2,
\end{align} 
Where $\Lambda(t) = \underset{\bm{\tau}\in [t,T]}{\text{Sup}} \big(1 + \lVert\Bar{\textbf{u}}(\tau)\rVert_{L^\infty}^2 + \lVert\Bar{\varphi}(\bm{\tau})\rVert_{H^2}^2  +\lVert\nabla\Bar{\varphi}(\bm{\tau})\rVert^2\lVert\Bar{\varphi}(\bm{\tau})\rVert_{H^2}^2\big)$. Applying Gronwall's inequality, we get,
\begin{align}\label{eq4.140}
     \lVert\xi(t)\rVert^2 + \lVert\nabla\xi\rVert_{L^2(0,T;H)}^2 +\lVert\textbf{v}\rVert_{L^2(0,T;{\mathbb{V}_{div}})}^2 &\leq C(1+ \Lambda(t))\big(\lVert\Bar{\varphi}(T)-\varphi_\Omega(T)\rVert^2 +\lVert\Bar{\varphi}-\varphi_d\rVert_{L^2(0,T;H)}^2\nonumber\\ &\hspace{.5cm}+ \lVert\Bar{\textbf{u}}-\textbf{u}_d\rVert_{L^2(0,T;H)}^2 \big).
\end{align}
Therefore we have,
\begin{align*}
    \xi &\in L^\infty(0,T;H) \cap L^2(0,T;V) ,\\
    \textbf{v} &\in L^2(0,T;\mathbb{V}_{div}).
\end{align*}
Further, taking the inner product of \eqref{eq4.1} with $\zeta \in V $ and estimating each of its terms, we obtain,
\begin{align*}
   |(\xi',\zeta)| &\leq c(\lVert\nabla\xi\rVert\lVert\zeta\rVert + \lVert\nabla\xi\rVert\lVert\nabla\zeta\rVert + \lVert\textbf{v}\rVert\lVert\zeta\rVert + \lVert\Bar{\varphi}-\varphi_d\rVert\lVert\zeta\rVert) \\ 
    &\leq C\big(\lVert\xi\rVert_{V}+ \lVert\textbf{v}\rVert + \lVert\Bar{\varphi}-\varphi_d\rVert\big) \lVert\zeta\rVert_{V}.
\end{align*}
implies $ \lVert\xi'\rVert_{V'} \leq C\big(\lVert\xi\rVert_{V}+ \lVert\textbf{v}\rVert + \lVert\Bar{\varphi}-\varphi_d\rVert\big)$. Integrating over $[0,T]$ and using \eqref{eq4.140} we conclude that, $\xi'\in L^2(0,T;V').$ Then by Aubin-Lions lemma, $\xi \in C([0,T];H)\cap L^2(0,T;V)$. 
To prove the uniqueness of a weak solution we use similar arguments as in the proof of uniqueness of a weak solution of the linearized system. 
\end{proof}

\subsection{Characterisation of an optimal control}

\begin{theorem}[First order necessary optimality condition]
Let $\Bar{\textbf{U}} \in \mathcal{U}_{ad}$ be an optimal control for (OCP) with associated optimal state given by  $(\Bar{\varphi},\Bar{\textbf{u}})$ and adjoint state $(\xi,\textbf{v})$. Then the following variational inequality holds. 
 \begin{align}\label{eq5.13}
     \int\limits_{0}^{T}\int_\Omega (\Bar{\textbf{U}}+\textbf{v})(\textbf{U}-\Bar{\textbf{U}}) \geq 0 \hspace{.5cm} \forall \textbf{U}\in \mathcal{U}_{ad}.
\end{align}
\end{theorem}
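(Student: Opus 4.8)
The plan is to turn the first--order condition \eqref{eq5.01} of Theorem 5.1 into the variational inequality \eqref{eq5.13} by means of a duality (integration--by--parts) identity linking the linearized variables $(\psi,\textbf{w})$ to the adjoint variables $(\xi,\textbf{v})$. Specifically, I would establish
$$\int_0^T\!\!\int_\Omega \psi(\bar\varphi-\varphi_d) + \int_0^T\!\!\int_\Omega \textbf{w}(\bar{\textbf{u}}-\textbf{u}_d) + \int_\Omega \psi(T)(\bar\varphi(T)-\varphi_\Omega) = \int_0^T\!\!\int_\Omega \textbf{v}\,(\textbf{U}-\bar{\textbf{U}}),$$
where $(\psi,\textbf{w})$ is the weak solution of the linearized system \eqref{eq3.2}-\eqref{eq3.7} associated to the control $\textbf{U}-\bar{\textbf{U}}$ (so that $\psi(0)=0$, the initial datum being fixed), and $(\xi,\textbf{v})$ is the adjoint state from Theorem 5.2 solving \eqref{eq4.1}-\eqref{eq4.3} with $\xi(T)=\bar\varphi(T)-\varphi_\Omega$. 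Granting this identity, the first three summands of \eqref{eq5.01} are exactly $\int_0^T\int_\Omega \textbf{v}(\textbf{U}-\bar{\textbf{U}})$, so \eqref{eq5.01} reads $\int_0^T\int_\Omega(\textbf{v}+\bar{\textbf{U}})(\textbf{U}-\bar{\textbf{U}})\ge 0$, which is precisely \eqref{eq5.13}.

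To prove the identity I would work throughout with the weak formulations. Test the linearized concentration equation with $\xi$ and the linearized velocity equation with $\textbf{v}$; test the adjoint concentration equation with $\psi$ and the adjoint velocity equation with $\textbf{w}$; then integrate over $\Omega\times(0,T)$ and form the combination (forward tested with adjoint) minus (adjoint tested with forward). The two temporal contributions combine, through the Lions--Magenes integration--by--parts formula (legitimate since $\psi,\xi\in L^2(0,T;V)\cap H^1(0,T;V')\hookrightarrow C([0,T];H)$), into $\int_0^T(\langle\psi',\xi\rangle+\langle\xi',\psi\rangle)=\int_\Omega\psi(T)\xi(T)-\int_\Omega\psi(0)\xi(0)=\int_\Omega\psi(T)(\bar\varphi(T)-\varphi_\Omega)$, using $\psi(0)=0$. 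The transport terms cancel since $(\bar{\textbf{u}}\nabla\psi,\xi)=-(\bar{\textbf{u}}\nabla\xi,\psi)$ because $\bar{\textbf{u}}$ is solenoidal and vanishes on $\partial\Omega$. The full $\psi$--$\xi$ diffusion/nonlocal block cancels because the operator in \eqref{eq4.1} is, by construction, the formal adjoint of the linearized operator in \eqref{eq3.2}: moving $-\big(m(\bar\varphi)a+\lambda(\bar\varphi)\big)\Delta\xi$ onto $\nabla\psi\cdot\nabla\xi$ (the boundary term drops by $\partial\xi/\partial n=0$ together with the zero--flux condition \eqref{eq3.6}) matches the diffusion parts, while the nonlocal parts pair up using that $J$ is even, hence $\nabla J$ is odd and $(\nabla J\ast f,g)=-(\nabla J\ast g,f)$ for the relevant factors. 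The two cross--couplings cancel in the same spirit: $(\textbf{w}\nabla\bar\varphi,\xi)=(\xi\nabla\bar\varphi,\textbf{w})$ matches the $\xi\nabla\bar\varphi$ term of \eqref{eq4.2} with the opposite sign, and the $\psi$--$\textbf{v}$ coupling of \eqref{eq3.3} (after integrating $a\nabla(\bar\varphi\psi)$ by parts, the conditions $\nabla\cdot\textbf{v}=0$ and $\textbf{v}|_{\partial\Omega}=0$ removing the spurious terms, and again using evenness of $J$) is the negative transpose of the $\textbf{v}$--coupling in \eqref{eq4.1}. The self--adjoint Stokes part $(\nu\nabla\textbf{w},\nabla\textbf{v})+(\eta\textbf{w},\textbf{v})$ cancels against its transpose, and the pressure and potential gradients in \eqref{eq3.3} vanish against the solenoidal test fields. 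What survives is exactly the asserted identity, the control term $(\textbf{U}-\bar{\textbf{U}},\textbf{v})$ coming from the right--hand side of \eqref{eq3.3} and the tracking terms $(\bar\varphi-\varphi_d,\psi)$, $(\bar{\textbf{u}}-\textbf{u}_d,\textbf{w})$ from the right--hand sides of \eqref{eq4.1}-\eqref{eq4.2}.

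The main obstacle is the term--by--term bookkeeping needed to confirm that \eqref{eq4.1}-\eqref{eq4.3} is genuinely the formal adjoint of the linearized operator; the single non--routine ingredient is the skew--symmetry of $\nabla J\ast$ forced by \textbf{[J]}, used jointly with the two distinct boundary conditions (the zero--flux condition \eqref{eq3.6} for the forward problem and the homogeneous Neumann condition for $\xi$). A secondary point is regularity: every pairing must be read weakly so that no term such as $(\,\cdot\,)\Delta\xi$ is evaluated directly, and the regularities $\psi,\xi\in L^2(0,T;V)\cap H^1(0,T;V')$, $\textbf{w},\textbf{v}\in L^2(0,T;\mathbb{V}_{div})$ from Theorems 4.2 and 5.2, together with the strong--solution bounds on $(\bar\varphi,\bar{\textbf{u}})$, justify each integration by parts and the evaluation of the temporal boundary term. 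Substituting the identity into \eqref{eq5.01} and taking $\textbf{U}-\bar{\textbf{U}}$ as the admissible direction then yields \eqref{eq5.13} for every $\textbf{U}\in\mathcal{U}_{ad}$.
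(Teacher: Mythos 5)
Your proposal is correct and follows essentially the same route as the paper: the paper likewise tests the adjoint system \eqref{eq4.1}--\eqref{eq4.2} against $(\psi,\textbf{w})$, integrates by parts to recognise the linearized system \eqref{eq3.2}--\eqref{eq3.7}, derives exactly your duality identity
$\int_0^T\!\!\int_\Omega \psi(\bar\varphi-\varphi_d)+\int_0^T\!\!\int_\Omega \textbf{w}(\bar{\textbf{u}}-\textbf{u}_d)+\int_\Omega\psi(T)(\bar\varphi(T)-\varphi_\Omega)=\int_0^T\!\!\int_\Omega(\textbf{U}-\bar{\textbf{U}})\textbf{v}$,
and substitutes it into \eqref{eq5.01}. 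Your write-up is in fact somewhat more careful than the paper's, making explicit the Lions--Magenes time integration by parts, the skew-symmetry of $\nabla J\ast$ from the evenness of $J$, and the role of the two boundary conditions, all of which the paper leaves implicit.
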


\begin{proof}

 As in Theorem 5.1, let $(\psi, \textbf{w}) $ be the unique weak solution of the linearized system \eqref{eq3.2}-\eqref{eq3.7} with control $ \textbf{U} - \bar{\textbf{U}} $. Taking inner product of the adjoint system \eqref{eq4.1}, \eqref{eq4.2} with $\psi $ and $\textbf{w} $ respectively and adding the resulting equations  we get, 
\begin{align*}
 \int\limits_{0}^{T}\int_{\Omega}\psi(\Bar{\varphi}-\varphi_d) + \int\limits_{0}^{T}\int_{\Omega}\textbf{w}(\Bar{\textbf{u}}-\textbf{u}_d) &= \int\limits_{0}^{T}\int_{\Omega}\psi (-\xi' - \Bar{u}\nabla\xi +m'(\Bar{\varphi})((\nabla a)\Bar{\varphi}-\nabla J*\Bar{\varphi})\nabla\xi  - \nabla J*(m(\Bar{\varphi})\nabla\xi))\\ &\hspace{.5cm} -\int\limits_{0}^{T}\int_{\Omega}\psi(m(\Bar{\varphi})a+ \lambda(\Bar{\varphi})\Delta\xi 
     + \int\limits_{0}^{T}\int_{\Omega}\psi\big((\nabla a)\Bar{\varphi} - \nabla J*\Bar{\varphi}\textbf{v}- \nabla J*(\Bar{\varphi}\textbf{v})\big)\\ &\hspace{.5cm}+ \int\limits_{0}^{T}\int_{\Omega}\textbf{w}( -\nabla\cdot(\nu\nabla\textbf{v})  +\eta\textbf{v}+\xi\nabla\Bar{\varphi})
\end{align*}
\begin{align*}
     &= \int\limits_{0}^{T}\int_{\Omega}\Big(\psi'+ \Bar{u}\nabla\psi-\nabla\cdot\big(m'(\Bar{\varphi})(\nabla(a\Bar{\varphi})-\nabla J*\Bar{\varphi})\psi + m(\Bar{\varphi})(\nabla(a\psi)-\nabla J*\psi) \big)\Big)\xi \\& \hspace{.5cm} -\int\limits_{0}^{T}\int_{\Omega}\nabla\cdot\big( \lambda'(\Bar{\varphi})\nabla\Bar{\varphi}\psi + \lambda(\Bar{\varphi})\nabla\psi\big)\xi + \int\limits_{0}^{T}\int_{\Omega}\big((\Bar{\varphi}\nabla a-\nabla J*\Bar{\varphi})\psi-(\nabla J*\psi)\Bar{\varphi}\big)\textbf{v} \\ &\hspace{.5cm} -\int_{\Omega}\psi(T)(\Bar{\varphi}(T)-\varphi_\Omega)  + \int\limits_{0}^{T}\int_{\Omega}(-\nu\Delta\textbf{w}+ \eta\textbf{w})\textbf{v}+(\textbf{w}\nabla\Bar{\varphi})\xi. 
\end{align*}
Comparing with the linearized system
\eqref{eq3.2}-\eqref{eq3.7}, we obtain,
\begin{align*}
    \int\limits_{0}^{T}\int_{\Omega}\psi(\Bar{\varphi}-\varphi_d) + \int\limits_{0}^{T}\int_{\Omega}\textbf{w}(\Bar{\textbf{u}}-\textbf{u}_d) + \int_{\Omega}\psi(T)(\Bar{\varphi}(T)-\varphi_\Omega) &= \int\limits_{0}^{T}\int_{\Omega}(\textbf{U}-\Bar{\textbf{U}})\textbf{v}.
\end{align*}
Substituting back in \eqref{eq5.01} we get, 
\begin{equation*}
 \int\limits_{0}^{T}\int_{\Omega}(\textbf{U}-\Bar{\textbf{U}})\ \textbf{v} + \int\limits_{0}^{T}\int_\Omega (\textbf{U}-\Bar{\textbf{U}}) \ \Bar{\textbf{U}} \geq 0 \hspace{.5cm}\forall \; \textbf{U}\in \mathcal{U}_{ad} .  
\end{equation*}
Hence the variational inequality \eqref{eq5.13} holds true.
\end{proof}

\begin{remark}
The inequality \eqref{eq5.13} together with the adjoint system \eqref{eq4.1}-\eqref{eq4.3} form the first order necessary optimality condition. Since the set of admissible controls, $\mathcal{U}_{ad}$ is a non-empty closed convex subset of $L^2(0,T;\mathbb{G}_{div}),$ \eqref{eq5.13} gives an equivalent condition,
\begin{equation*}
    \Bar{\textbf{U}} = \mathbb{P}_{\mathcal{U}_{ad}}(-\textbf{v}),
\end{equation*}
where $\mathbb{P}_{\mathcal{U}_{ad}}$ represents the orthogonal projection from $L^2(0,T;\mathbb{G}_{div})$ onto the space $\mathcal{U}_{ad}.$ Further, we have pointwise condition,
\begin{equation}
    \Bar{\textbf{U}}_i(x,t) = \text{max}\{ \textbf{U}_{1,i}(x,t), \text{min}\{-\textbf{v}_i(x,t), \textbf{U}_{2,i}(x,t)\}\}, \hspace{.5cm}i=1,2
\end{equation}
for a.e. $(x,t) \in \Omega\times(0,T)$.
\end{remark}
\vspace{.75cm}



\addcontentsline{toc}{chapter}{Bibliography}

\end{document}